\theoremstyle{plain}
\newtheorem{prop}{Proposition}[section]
\newtheorem{thm}[prop]{Theorem}
\newtheorem{cor}[prop]{Corollary}
\newtheorem{lem}[prop]{Lemma}
\newtheorem{conj}[prop]{Conjecture}
\newtheorem*{thm*}{Theorem}
\theoremstyle{definition}
\newtheorem{dfn}[prop]{Definition}
\newtheorem{rem}[prop]{Remark}
\newtheorem{example}[prop]{Example}
\newtheorem{lab}[prop]{}
\renewcommand{\iff}{\Leftrightarrow}
\newcommand{\C}{{\mathbb{C}}}
\renewcommand{\P}{{\mathbb{P}}}
\newcommand{\R}{{\mathbb{R}}}
\newcommand{\N}{{\mathbb{N}}}
\newcommand{\Z}{{\mathbb{Z}}}
\DeclareMathOperator{\im}{im}
\DeclareMathOperator{\interior}{int}
\DeclareMathOperator{\gl}{GL}
\DeclareMathOperator{\gram}{Gram}
\DeclareMathOperator{\spn}{span}
\DeclareMathOperator{\codim}{codim}
\DeclareMathOperator{\initial}{in}
\DeclareMathOperator{\gin}{gin}
\DeclareMathOperator{\orth}{O} 
\DeclareMathOperator{\p}{p} 
\newcommand{\Rx}{\mathbb{R}[\ul{x}]}
\newcommand{\vc}{\mathcal{V}}
\renewcommand{\setminus}{\smallsetminus}
\newcommand{\ol}{\overline}
\newcommand{\ul}{\underline}
\newcommand{\all}{\forall\,}
\renewcommand{\subset}{\subseteq}
\renewcommand{\supset}{\supseteq}
\newcommand{\mm}{\mathfrak{m}}
\newcommand{\sz}{\vc}
\newcommand{\sy}[1]{\mathcal{S}_2 #1}
\newcommand{\id}[1]{\langle #1 \rangle}
\newcommand{\bpf}{\bp-free}
\newcommand{\bp}{base-point}
\newcommand{\gs}{Gram spectrahedron}
\newcommand{\gsa}{Gram spectrahedra}
\newcommand{\cc}{change of coordinates}
\newcommand{\nonsing}{non\--singular}
\renewcommand{\emptyset}{\varnothing}
\renewcommand{\setminus}{\smallsetminus}
\renewcommand{\epsilon}{\varepsilon}
\renewcommand{\theta}{\vartheta}
\newcommand{\todfn}[1]{\textit{#1}}
\newcommand{\ra}[1]{\renewcommand{\arraystretch}{#1}}
\newcommand{\bin}[2]{\textstyle\binom{#1}{#2}}
\author[Julian Vill]{Julian Vill}
\address{	
	OvGU Magdeburg, FMA-IAG,
	D-39106 Magdeburg, Germany
}
\email{julian.vill@ovgu.de}
\title{Bounds on Hilbert functions with application to convexity}
\begin{document}

\begin{abstract}
Given a subspace $U\subset\C[x_1,\dots,x_n]_d$ we consider the closure of the image of the rational map $\P^{n-1}\dashrightarrow\P^{\dim U-1}$ given by $U$. Its coordinate ring is isomorphic to $\bigoplus_{i\ge 0} U^i$ where $U^i$ is the degree $i$ component. We consider the Hilbert function of this algebra in the case where $U$ contains a regular sequence, equivalently the map is a morphism, and find lower bounds for the dimension of the degree 2 component.
We apply our bounds to study the boundary structure of certain convex sets, called Gram spectrahedra, which are linked to sum of squares representations of non-negative polynomials.
\end{abstract}

\maketitle

\section{Introduction}

A well-studied property of ideals in standard-graded polynomial rings is their Hilbert function. A natural question to ask therefore is which sequences of integers can appear as Hilbert functions of such ideals. Macaulay answered this question and showed that whenever there exists an ideal having a certain Hilbert function, there also exists a lex-ideal with the same Hilbert function.
In particular, this implies that it suffices to consider monomial ideals.

Later, Eisenbud, Green and Harris \cite{egh1993} asked a more refined question. Given an ideal $I$ in the polynomial ring over $\C$ containing a regular sequence of degrees $d_1,\dots,d_s$. Can we find a monomial ideal containing $x_i^{d_i}$ ($i=1,\dots,s$) with the same Hilbert function. This question turned out to be very difficult and currently only several special cases have been proven.

We are interested in a similar refinement in a different situation.
Let $n,\,d\in\N$, $n,\,d\ge 2$. We denote by $A: =\C[x_1,\dots,x_n]$ the polynomial ring over $\C$. We also write $A(n)$ to emphasize the number of variables but usually omit the $n$ in the notation. For the polynomial ring over the reals we write $\Rx:=\R[x_1,\dots,x_n]$, $\ul x=(x_1,\dots,x_n)$. 

Given a positive integer $r\le \dim A_d$ and a subspace $U\subset A_d$ of dimension $r$, one may consider the rational map $\P^{n-1}\dashrightarrow\P^{r-1}$ defined by $U$. The coordinate ring of the closure of the image of this map is then given as the subalgebra $\C[U]$ defined by $U$. It carries a natural $\Z_+$-grading with the degree $i$ component being $U^i$, the subspace spanned by all homogeneous polynomials $\prod_{j=1}^i f_j$ with $f_j\in U$.

In \cite{bc2018} the authors ask the following question: For given $r,i,n,d$ what is the minimum of $\{\dim U^i\colon U\subset A_d,\, \dim U=r\}$. They show that it is attained by a strongly stable subspace. These are monomial subspaces, hence the minimum can be computed in small cases as one only has to check finitely many cases.

We are interested in the analogue of the EGH conjecture in this situation. Namely, determine the minimum if we additionally require the subspace $U$ to be \bpf, i.e. there is no point $\xi\in\P^{n-1}$ such that for every $f\in U\colon f(\xi)=0$. This is also equivalent to requiring the rational map defined by $U$ to be a morphism rather than a rational map.

As can be expected from the EGH conjecture this makes the problem even harder. Especially, it is no longer clear if it suffices to consider monomial subspaces.

In this paper, we are exclusively interested in the degree 2 component $U^2$. It is easy to see that the maximal dimension is attained by a generic subspace, i.e. being generic in the Grassmannian of $r$-planes in affine $(\dim A_d)$-space. One might expect that for a generic subspace the multiplication map $\sy U\to U^2$ from the second symmetric power is either injective or surjective. However, as shown in \cite[Prop 2.8.]{bc2018} this is not true in general.

Regarding the minimal possible dimension, if we do not require the subspace to be \bpf, we know we may take a strongly stable subspace. However, every strongly stable subspace of codimension at least 1 has a \bp\ which makes it useless for our purpose.

We note that in general, even in small cases it is a hard task to determine this value. We do for example not know the precise value already for subspaces of codimension 2 in $\R[x,y]_5$.

It turns out to be more convenient to consider the codimension of such subspaces instead of the dimension. Therefore, we are now interested in the maximum of $\{\codim U^2\colon U\subset A_d,\, \codim U=k,\, U \text{ \bpf}\}$. Our main result is the following bound for subspaces of small codimension.

\begin{thm*}[\cref{thm:main_bound}]
Let $k\le d-1$. Then for every $n\ge 2$ and every \bpf\ subspace $U\subset \C[x_1,\dots,x_n]_d$ of codimension $k$ we have 
\[
\codim U^2\le k^2+\bin{k+2}{3} = \frac{1}{6}(k^3+9k^2+2k).
\]
\end{thm*}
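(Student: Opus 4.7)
The plan is to bound $\codim_{A_{2d}} U^2$ by filtering $A_{2d}$ through the ideal $(U)\subseteq A$ generated by $U$. Since $U$ is base-point-free, $(U)$ is $\mm$-primary, so the graded quotient $R:=A/(U)$ is Artinian with $R_d\cong W:=A_d/U$ of dimension $k$. The two-step filtration $U^2\subseteq U\cdot A_d=(U)_{2d}\subseteq A_{2d}$ splits the codimension as
\[
\codim U^2 \;=\; \dim R_{2d} \;+\; \dim\bigl(U\cdot A_d/U^2\bigr).
\]

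The top piece $\dim R_{2d}$ is controlled by Macaulay's theorem on Hilbert functions: under the assumption $k\le d-1$, the Macaulay representation of $k$ in base $d$ is simply the sum of the $k$ trivial binomials $\binom{d-i}{d-i}=1$ for $i=0,\dots,k-1$, so iterated Macaulay bounds yield $\dim R_{d+j}\le k$ for every $j\ge 0$, and in particular $\dim R_{2d}\le k$. (A cruder spanning estimate $\dim R_{2d}\le\binom{k+1}{2}$ also follows from the observation that the products $w_iw_j$ of lifts $w_1,\dots,w_k\in A_d$ of a basis of $W$ span $R_{2d}$.)

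For the middle piece, I realize $(U\cdot A_d)/U^2$ as the image of the well-defined multiplication map $\mu\colon U\otimes W\to A_{2d}/U^2$, $u\otimes w\mapsto uw+U^2$, so the task is to cut this image down by producing many elements of $\ker\mu$. Base-point-freeness is what supplies them: $U$ contains a regular sequence $f_1,\dots,f_n$ of degree $d$, and the Koszul identities $f_ig_j=f_jg_i$ in $A_{2d}$ (for any $g_i,g_j\in A_d$) descend to relations in $\ker\mu$ after decomposing the $g_i$'s modulo $U$. More generally, every identity $uv=u'v'$ in $A_{2d}$ with $u,u'\in U$ gives such a relation.

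I expect the main obstacle to be extracting the cubic-in-$k$ bound on the middle piece: the naive estimate on its dimension is $\dim U\cdot k$, linear in $\dim U$ and therefore far too large in general. A natural strategy is induction on $k$, with base case $k=0$ trivial. For the step, pick $w\in A_d\setminus U$ and set $U':=U+\C w$, a base-point-free subspace of codimension $k-1$; by the induction hypothesis $\codim(U')^2\le(k-1)^2+\binom{k+1}{3}$, so using $(U')^2=U^2+Uw+\C w^2$ it remains to bound $\dim(U')^2/U^2\le\dim(Uw+\C w^2)/U^2$ by $2k-1+\binom{k+1}{2}$ (the increments of $k^2$ and $\binom{k+2}{3}$, using Pascal's identity $\binom{k+2}{3}-\binom{k+1}{3}=\binom{k+1}{2}$). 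The hypothesis $k\le d-1$ is essential precisely here—it is the range in which Macaulay persistence forces $\dim R_{d+j}\le k$, and this in turn should control the codimension of the annihilating subspace $\{u\in U:uw\in U^2\}\subseteq U$, which is exactly what one needs to close the inductive estimate at $k^2+\binom{k+2}{3}$.
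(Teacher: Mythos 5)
The filtration $U^2\subset UA_d\subset A_{2d}$ and the Macaulay bound on $\dim R_{2d}$ are sound, though you are underselling what is available: with $k\le d-1$ and base-point-freeness one actually gets $R_{2d-1}=0$ (\cref{cor:degree2d-1}), hence $R_{2d}=0$, so the top piece vanishes outright, whereas your $\le k$ uses only Macaulay's growth inequality and not base-point-freeness at all. The genuine gap is in the inductive step. You need $\dim\bigl((U')^2/U^2\bigr)\le 2k-1+\binom{k+1}{2}$, which (up to the $+1$ from $w^2$) amounts to bounding the rank of $U\to A_{2d}/U^2$, $u\mapsto uw$, i.e.\ the codimension in $U$ of $N=\{u\in U:uw\in U^2\}$. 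You assert that Macaulay persistence "should control" $\codim_U N$, but there is no such implication: the Hilbert function of $A/\id{U}$ bounds how the ideal $\id{U}$ fills $A$ degree by degree, and it already forces $R_{2d}=0$; it says nothing about the gap between $U^2$ and $UA_d$, which is a property of $U$ as a degree-$d$ subspace and not of the ideal it generates. Worse, the image of $u\mapsto uw\bmod U^2$ lies in $(UA_d)/U^2$, whose dimension equals $\codim U^2$ once $R_{2d}=0$; so an a priori $n$-independent bound on $\codim_U N$ is essentially equivalent to the theorem itself, making the argument circular as stated.

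The paper's route is genuinely different, and the difference is exactly the $n$-independence mechanism your sketch lacks. It shows that after a generic change of coordinates the restriction $U\cap A(2k)_d$ still has codimension $k$ (\cref{cor:dim_intersection}, via Green's theorem) and is still base-point-free (\cref{prop:stay_bpf}, requiring $n\ge 2k+1$), and then $\codim U^2\le\codim(U\cap A(2k)_d)^2$ by \cref{cor:reduction_number_of_variables} (which uses $R_{2d-1}=0$). The remaining bound $m(2k,k,k)\le k^2+\binom{k+2}{3}$ is a combinatorial statement about strongly stable monomial subspaces (\cref{lem:2k_le_k}, \cref{thm:m-k-k-k}). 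An induction that adds one form $w$ to $U$ while fixing $n$ has no built-in device to cap the $n$-dependence; indeed, without base-point-freeness $\codim U^2$ grows like $kn$, and the Koszul relations you gesture at give, by themselves, only the $n$-dependent bound of \cref{prop:small_subspaces}. You would need to inject something like the paper's variable reduction to make the induction close.
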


We want to emphasize that this bound is independent of the number of variables $n$ which heavily contrasts the general case where we allow the subspace to have \bp s. If for example $U$ has codimension 1, then the unique strongly stable subspace of codimension 1 satisfies $\codim U^2=n$.

Lastly, we apply our results to study dimensions of faces of Gram spectrahedra. Let $f\in\Rx_{2d}$ be a sum of squares, i.e. there exist $f_1,\dots,f_r\in \Rx_d$ such that $f=\sum_{i=1}^r f_i^2$. In general, for one fixed $f$ there are infinitely many such representations, both of different lengths and of the same length.
Moreover, on representations of length $r$ there is an action of the orthogonal group $\orth(r)$. Modulo this operation, we get a compact, convex set $\gram(f)$, called the Gram spectrahedron of $f$, parametrizing all sum of squares representations of $f$. It is a subset of the cone of real symmetric positive semidefinite matrices of size $\dim A_d$. As a closed, convex set its boundary is the union of faces which in this case are given by intersecting $\gram(f)$ with hyperplanes that only intersect the boundary of $\gram(f)$.
To every face $F\subset\gram(f)$ we can associate a subspace $U\subset\Rx_d$. If a relative interior point of $F$ is given by a matrix $G$ this subspace is simply the image of this $\dim A_d\times\dim A_d$-matrix embedded into $\Rx_d$. Especially, the dimension of the subspace $U$ is exactly the rank of the matrix $G$. By a result of Scheiderer \cite{scheiderer2018} the dimension of the face $F$, i.e. the dimension of its affine hull, is given by the formula
\begin{equation}
\label{eq:dim_face}
\dim F=\bin{\dim U+1}{2}-\dim U^2.
\end{equation}
Therefore, if considering faces whose relative interior points are matrices of some fixed rank $r$, the dimension of the face only depends on $\dim U^2$.
If in the representation $f=\sum_{i=1}^r f_i^2$ all forms $f_i$ vanish at a point $\xi\in\P^{n-1}$, the form $f$ vanishes to order 2 at this point which shows that $f$ is singular. 
Hence, if we are interested in \gsa\ of non-singular forms we need to study subspaces that are \bpf. Put differently, comparing general subspaces to \bpf\ subspaces corresponds to comparing \gsa\ of singular forms to \gsa\ of non-singular forms.

Our bound above immediately gives rise to an upper bound on the dimension of such faces. As it is independent of the number of variables, we see that dimensions of faces of \gsa\ of singular forms and non-singular form may be arbitrarily far apart if we allow the number of variables to grow. 

Next we explain the structure of the paper as well as the methods used in the proofs. 
In \cref{sec:Preliminaries} we recall some results about strongly stable subspaces and the bounds for $\codim U^2$ found by Boij and Conca \cite{bc2018}. At the end we introduce well-known theorems by Macaulay, Gotzmann and Green concerning the growth of Hilbert functions. \cref{sec:A first upper bound for base-point-free subspaces} contains a first easy upper bound whereas in \cref{sec:Subspaces of codimension 1 and 2} we are concerned with subspaces of codimension 1 and 2. Especially the methods used in the codimension 2 case are used again later on. After this we start proving the general bound which is split into \cref{sec:Reducing the degree} to \cref{sec:determine_bound}. In the first of the sections we study the dependence of $m(n,d,k)=\max\{\codim U^2\colon U\subset A_d\text{ subspace},\, \codim U=k\}$ on the degree $d$. After that, we analyse how to change the number of variables and end by determining the number $m(k,k,k)$. In the last section, \cref{sec:gram_spec}, we apply our results to find bounds on the dimension of faces of Gram spectrahedra.

\section{Preliminaries}
\label{sec:Preliminaries}

In this first section, we recall upper bounds for $\codim U^2$ proven by Boij and Conca \cite{bc2018}. The subspaces realizing the bounds are monomial subspaces and may even be taken to be strongly stable. Strongly stable subspaces and generic initial ideals form one of our main tools for the rest of the paper.

For most results, the monomial order does not play a role. For simplicity, if not stated otherwise, we always work with the lex-ordering. Whenever we consider the orthogonal complement of a subspace, we always work with the apolarity pairing.

\begin{dfn}
Let $U\subset A_d$ be a monomial subspace. $U$ is called \todfn{strongly stable} if for every $1 \le i \le n$ the following holds: For every monomial $M\in U$ such that $x_i|M$ and every $i<j\le n$, the monomial $x_j\frac{M}{x_i}$ is contained in $U$.
\end{dfn}

For every monomial ordering $\succeq$ where $x_1<\dots<x_n$ and every monomial $M$ such that $x_i|M$ we have $x_j\frac{M}{x_i}\succeq M$ for every $j>i$.
 
\begin{rem}
Another way of thinking about strongly stable subspaces is via their complements. If $U\subset A_d$ is strongly stable and $W=U^\perp$, then for every $1\le i\le n$ the following holds: For every monomial $M\in W$ such that $x_i|M$ and every $1\le j<i$, the monomial $x_j\frac{M}{x_i}$ is contained in $W$, i.e. the inequality sign is reversed.
\end{rem}

The following statements are all immediate to check from the definition.

\begin{lem}
\label{lem:ss_small_codim}
Let $U\subset A_d$ be a strongly stable subspace of codimension $k$.
\begin{enumerate}
\item The subspace $V:=x_1U\oplus \C[x_2,\dots,x_n]_{d+1}$ is also strongly stable.
\item If $\codim U\le d$, every monomial in $U^\perp$ is divisible by $x_1^s$ with $s:=d-k+1$.
\item If $\codim U\le n$ then every monomial in $U^\perp$ is contained in $A(k)_d=\C[x_1,\dots,x_k]_d$.
\end{enumerate}
\end{lem}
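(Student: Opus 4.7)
The plan is to verify each of the three items directly from the definition of strongly stable. For (ii) and (iii), I will use the \emph{dual} characterization recorded in the preceding Remark: a monomial $M\in W:=U^\perp$ with $x_i\mid M$ satisfies $x_j M/x_i\in W$ for every $j<i$.

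For (i), I will check strong stability of $V:=x_1 U\oplus\C[x_2,\dots,x_n]_{d+1}$ one monomial at a time. A monomial $N\in V$ is either of the form $x_1 M$ with $M\in U$ a monomial, or lies in $\C[x_2,\dots,x_n]_{d+1}$; the two cases are disjoint according to whether $x_1\mid N$. Given $x_i\mid N$ and $j>i$: if $N\in\C[x_2,\dots,x_n]_{d+1}$ then $i\ge 2$ and $x_j N/x_i$ stays in this summand; if $N=x_1 M$ with $i\ge 2$, then $x_i\mid M$ and strong stability of $U$ puts $x_j N/x_i=x_1(x_j M/x_i)$ into $x_1U$. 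The delicate subcase is $i=1$: then $x_j N/x_1=x_j M$ lands in $x_1U$ if $x_1\mid M$ (apply strong stability of $U$ at $M$) and in $\C[x_2,\dots,x_n]_{d+1}$ otherwise (since $j\ge 2$ and $x_1\nmid x_j M$).

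For (iii) I will argue by contradiction: if some monomial $M\in W$ is divisible by $x_i$ with $i>k$, write $M=x_i N$. The dual property yields $x_j N\in W$ for every $j=1,\dots,i-1$, and together with $M=x_i N$ this exhibits $i\ge k+1$ distinct monomials of $W$, contradicting $\dim W=k$.

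For (ii) I will pick a monomial $M=x_1^{a_1}\cdots x_n^{a_n}\in W$ minimizing $a_1$ and set $r:=d-a_1$. Pushing each occurrence of $x_\ell$ ($\ell\ge 3$) down to $x_2$ via the dual property produces $x_1^{a_1}x_2^{r}\in W$, and then pushing $x_2$ down to $x_1$ step by step yields the $r+1$ distinct monomials $x_1^{a_1+t}x_2^{r-t}$ ($t=0,\dots,r$), all in $W$. Hence $k\ge r+1$, so $a_1\ge d-k+1=s$, and the minimality of $a_1$ propagates this bound to every monomial of $W$. The hypothesis $k\le d$ is exactly what guarantees $s\ge 1$. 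None of the three items requires any deep input; the only spot demanding real attention is the $i=1$ subcase of (i), where the swap can spill from one summand of $V$ into the other.
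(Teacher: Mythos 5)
Your proof is correct. The paper leaves this lemma as ``immediate to check from the definition,'' and your direct verification --- the case analysis for (i) including the $i=1$ subcase where the swap crosses between summands, and the counting arguments for (ii) and (iii) via the dual characterization of $U^\perp$ --- is exactly what that check amounts to.
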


The main reason why strongly stable subspaces are particularly useful are the following propositions.

\begin{prop}[{\cite[Theorem 15.18]{eisenbud1995}}]
\label{prop:gin_existence}
Let $I\subset A$ be a homogeneous ideal. There exists a Zariski-open subset $V\subset \gl_n(\C)$ such that for every $G_1,G_2\in V$ the initial ideals satisfy $\initial(G_1I)=\initial(G_2I)$ where $G_iI: = \{p(G_i^{-1}x)\colon p\in I\}$ is the ideal $I$ is mapped to by the coordinate change $G_i$ for $i=1,2$.
\end{prop}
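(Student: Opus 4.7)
The plan is to reduce the problem to finitely many degrees and, in each degree, use the irreducibility of $\gl_n(\C)$ to pick out a generic stratum on the Grassmannian of subspaces of $A_d$.

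First I would fix a degree $d \ge 0$ and observe that $G \mapsto (GI)_d$ defines a morphism $\phi_d\colon \gl_n(\C) \to \grass(r_d, A_d)$, where $r_d := \dim_\C I_d$ is invariant under the $\gl_n$-action since $\gl_n$ acts linearly and invertibly on $A_d$. On the target Grassmannian, each locus $\Sigma_M := \{W : \initial(W) = M\}$ indexed by a monomial subspace $M \subset A_d$ of dimension $r_d$ is locally closed: in a suitable Plücker chart it is cut out by non-vanishing of the Plücker coordinate indexed by $M$ together with vanishing of those indexed by monomial bases smaller than $M$ in the induced order. Thus the Grassmannian decomposes into finitely many locally closed strata. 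Since $\gl_n(\C)$ is irreducible, its image under $\phi_d$ is irreducible as well and therefore meets a unique stratum $\Sigma_{M_d}$ in a dense open subset, yielding a nonempty Zariski-open set $V_d := \phi_d^{-1}(\Sigma_{M_d}) \subset \gl_n(\C)$ on which $\initial((GI)_d) = M_d$ is constant.

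Second, to promote this to a single open set $V$ that works simultaneously for all degrees, I would invoke Gotzmann's regularity theorem (which the paper cites later): there exists an integer $N$, depending only on the Hilbert polynomial of $A/I$, such that $\initial(GI)$ is generated in degrees at most $N$ for every $G \in \gl_n(\C)$. Setting $V := \bigcap_{d \le N} V_d$ then gives a finite and hence Zariski-open intersection of nonempty open sets, and on $V$ the initial ideal is pinned down by the graded components $\initial((GI)_d)$ with $d \le N$, each of which is constant by construction.

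The main obstacle is exactly this last step, i.e.\ the passage from individual degrees to the whole ideal at once. The fixed-degree Grassmannian stratification is classical, but Zariski-openness is not preserved under infinite intersections, so a uniform Gotzmann-type bound on the generating degrees of $\initial(GI)$ is indispensable for the argument to go through.
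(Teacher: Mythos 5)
The paper itself gives no proof for this proposition (it is a direct citation of \cite[Theorem 15.18]{eisenbud1995}), so I am comparing your proposal against the standard argument of Galligo/Eisenbud. Your first paragraph is essentially correct: for each fixed $d$, the Grassmannian stratifies by initial subspace into finitely many locally closed pieces cut out by (non)vanishing of Pl\"ucker coordinates, and irreducibility of $\gl_n(\C)$ forces $\phi_d^{-1}$ of a unique stratum $\Sigma_{M_d}$ to be nonempty Zariski-open. (A small slip: since the initial subspace is the \emph{largest} $r_d$-element monomial set with nonvanishing Pl\"ucker coordinate, $\Sigma_{M_d}$ is the locus where $p_{M_d}\neq 0$ and $p_{M'}=0$ for all $M'$ \emph{larger} than $M_d$, not smaller; but this is only a matter of convention and does not affect the conclusion.)

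The gap is in the second step. You invoke Gotzmann's regularity theorem to claim that $\initial(GI)$ is generated in degrees at most some $N$ \emph{for every} $G\in\gl_n(\C)$, with $N$ depending only on the Hilbert polynomial. That is not what Gotzmann regularity gives you: the Gotzmann bound controls the regularity of the \emph{saturated} ideal (equivalently of the ideal sheaf) with a given Hilbert polynomial, whereas the ideals $\initial(GI)$ are in general not saturated and their regularity is not controlled by that of $GI$ (for the lex order, initial ideals can have much larger regularity than the ideal itself). Moreover the paper's \cref{thm:gotzmannpersistence} is the \emph{persistence} theorem, a statement about Hilbert function growth, which does not directly yield a generating-degree bound on an arbitrary $\initial(GI)$. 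As stated, the claim that a single $N$ works uniformly over all of $\gl_n(\C)$ is therefore unjustified.

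The standard repair, which is what the cited proof in Eisenbud actually does, avoids any uniform bound over $\gl_n(\C)$. Observe that for any ideal $J$ one has $A_1\cdot\initial(J)_d\subset\initial(J)_{d+1}$. Applying this to $J=GI$ for a $G$ in the nonempty open set $V_d\cap V_{d+1}$ shows $A_1 B_d\subset B_{d+1}$ for all $d$, where $B_d:=M_d$ is your generic initial subspace in degree $d$. Hence the single graded space $B:=\bigoplus_d B_d$ is an ideal, and by Noetherianity it is generated in degrees $\le N$ for some $N$. Now set $V:=\bigcap_{d\le N}V_d$; for $G\in V$ one has $\initial(GI)_d=B_d$ for $d\le N$, hence $\initial(GI)\supset\id{B_{\le N}}=B$. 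Since $\dim\initial(GI)_d=\dim(GI)_d=\dim I_d=\dim B_d$ for every $d$, this containment is an equality degree by degree, so $\initial(GI)=B$ for all $G\in V$. (Alternatively one could appeal to Maclagan's theorem that there are only finitely many monomial ideals with a prescribed Hilbert function, which also produces the needed $N$, but this is a heavier tool than necessary.) Note also that your write-up stops at ``pinned down by the graded components with $d\le N$''; even with a correct $N$, the equality $\initial(GI)=B$ in degrees $>N$ requires the Hilbert-function dimension count above, which you do not make.
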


\begin{dfn}
Let $I\subset A$ be a homogeneous ideal and $G\in V$ as in \cref{prop:gin_existence} then
\[
\gin(I): = \initial(GI)
\]
is called the \todfn{generic initial ideal} of $I$.
\end{dfn}

Generic initial ideals have already been used by Hartshorne in 1966 (\cite{hartshorne1966}) to show the connectedness of Hilbert schemes, and later on to get hold of invariants of projective varieties.
The first systematic study of generic initial ideals in characteristic 0 was done by Galligo in \cite{galligo1974}.

\begin{prop}[{\cite[Theorem 15.20, 15.23]{eisenbud1995}}]
\label{prop:gin_is_stongly_stable}
Let $I\subset A$ be a homogeneous ideal, then for every $s \ge 0$ the vector space $\gin(I)_s$ is strongly stable.
\end{prop}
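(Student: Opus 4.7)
The result is Galligo's classical theorem on the structure of generic initial ideals, which I would prove in two conceptually separate stages. First, show that $J := \gin(I)$ is fixed by the lower-triangular Borel subgroup $B \subset \gl_n(\C)$ compatible with the chosen term order, meaning $bJ = J$ for every $b \in B$. Second, deduce that any such Borel-fixed monomial ideal is strongly stable in characteristic zero; the claim about $\gin(I)_s$ then follows by taking degree-$s$ components.

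For Stage 1, let $V \subset \gl_n(\C)$ be the Zariski-open set from \cref{prop:gin_existence} on which $\initial(hI) = J$. For any $b \in B$, the set $V \cap b^{-1}V$ is nonempty and open, so I may choose $h$ in it, yielding $J = \initial(hI) = \initial(bhI)$. To upgrade this identity to $bJ = J$, I would use the flat-limit description of the initial ideal: for a positive weight $w$ with $w_1 < \cdots < w_n$ strictly refining the lex term order, the torus element $\phi(t) := \diag(t^{w_1}, \ldots, t^{w_n})$ realises $\initial(K)$ as the flat limit $\lim_{t \to \infty} \phi(t) K$. The choice of the lower-triangular Borel $B$ is precisely the condition that $\phi(t)^{-1} b\, \phi(t) \to \diag(b)$ in this same limit, since each off-diagonal entry $b_{kl}$ ($k > l$) is scaled by $t^{w_l - w_k} \to 0$. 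Combining these ingredients,
\[
bJ \;=\; b\,\lim_{t\to\infty} \phi(t)(hI) \;=\; \lim_{t\to\infty} \phi(t)\bigl(\phi(t)^{-1} b\, \phi(t)\bigr)(hI) \;=\; \lim_{t\to\infty} \phi(t)\bigl(\diag(b)\, hI\bigr) \;=\; \diag(b) \cdot J \;=\; J,
\]
using in the last step that monomial ideals are fixed by the diagonal torus.

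For Stage 2, let $M \in J$ be any monomial with $x_i \mid M$ and write $M = x_i^{a_i} M'$ with $x_i \nmid M'$. For any $j > i$, the elementary matrix $b_\epsilon := I + \epsilon E_{ji} \in B$ implements the substitution $x_i \mapsto x_i + \epsilon x_j$ (fixing the other variables), so
\[
b_\epsilon M \;=\; (x_i + \epsilon x_j)^{a_i} M' \;=\; \sum_{\ell=0}^{a_i} \binom{a_i}{\ell}\,\epsilon^{\ell}\, x_i^{a_i-\ell} x_j^{\ell} M' \;\in\; J \qquad \text{for every } \epsilon \in \C.
\]
Since the monomials $x_i^{a_i - \ell} x_j^{\ell} M'$ are pairwise distinct, evaluating this identity at $a_i + 1$ distinct values of $\epsilon$ yields a linear system whose coefficient matrix factors as $\diag(\binom{a_i}{\ell})$ times a Vandermonde matrix; both factors are invertible in characteristic zero, so each monomial $x_i^{a_i - \ell} x_j^{\ell} M'$ individually lies in $J$. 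The case $\ell = 1$ is exactly the strong stability relation $x_j M / x_i \in J$.

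The main obstacle is Stage 1: rigorously interchanging the flat limit with the $B$-action requires interpreting $\lim_t \phi(t) K$ as a flat family of ideals over $\mathbb{A}^1_t$ and invoking continuity of the $\gl_n$-action on the relevant component of the Hilbert scheme, together with careful bookkeeping of the compatibility between $B$ and the term order. Stage 2 is purely combinatorial and becomes elementary once characteristic zero guarantees nonvanishing binomial coefficients.
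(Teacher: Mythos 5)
The paper gives no proof here — it cites \cite[Theorems 15.20, 15.23]{eisenbud1995} — so I am assessing your argument against the cited source. Your overall strategy is exactly the standard Galligo/Bayer–Stillman decomposition that Eisenbud follows: Stage~1 is Theorem~15.20 (generic initial ideals are Borel-fixed) and Stage~2 is Theorem~15.23 (Borel-fixed equals strongly stable in characteristic zero). Stage~2 is correct as written: the Vandermonde argument, together with the non-vanishing of $\binom{a_i}{\ell}$ in characteristic zero, is precisely the standard proof.

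Stage~1 has a gap, and it is more serious than the ``careful bookkeeping'' you flag. Expand both sides of the equality in which you replace $\phi(t)^{-1}b\,\phi(t)$ by its limit $\diag(b)$: the left-hand side is
\[
\lim_{t\to\infty}\phi(t)\bigl(\phi(t)^{-1}b\,\phi(t)\bigr)(hI)
=\lim_{t\to\infty}b\,\phi(t)(hI)
=b\cdot\lim_{t\to\infty}\phi(t)(hI)=bJ,
\]
while the right-hand side is
\[
\lim_{t\to\infty}\phi(t)\bigl(\diag(b)\,hI\bigr)
=\diag(b)\cdot\lim_{t\to\infty}\phi(t)(hI)=\diag(b)J=J.
\]
So the step you are trying to justify is literally the statement $bJ=J$; the argument is circular, not merely informal. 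The root cause is that the initial-ideal operation is only semicontinuous, so replacing a converging family of group elements by its limit inside a flat degeneration can change the answer, and no continuity-of-the-$\gl_n$-action lemma will rescue this. Note also that you never actually invoke the identity $\initial(bhI)=J$ that you arranged by taking $h\in V\cap b^{-1}V$ — a symptom that the real content has been lost. The classical proof avoids the interchange altogether: fix a degree $d$, pass to the Pl\"ucker embedding of the Grassmannian of $m$-planes in $A_d$, observe that $\gin(I)_d$ is selected by the term-order-largest Pl\"ucker coordinate $c_{S}(g)$ that does not vanish identically as a polynomial in $g\in\gl_n$, and use that a unipotent $b\in B$ sends each basis wedge $e_S$ to $e_S$ plus ``lower'' wedges. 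Combined with the vanishing $c_{S'}\equiv 0$ for all $S'$ above $S_J$, this makes $c_{S_J}$ a $B$-semi-invariant; a Borel fixed point theorem argument in the orbit closure then pins $[J_d]$ down as a $B$-fixed point. That determinantal/orbit-closure route is what underlies Eisenbud's Theorem~15.20 and is the part your sketch is missing.
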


The main idea is the following easy observation.

\begin{lem}
\label{lem:powers_and_initial}
Let $I\subset A$ be a homogeneous ideal then $\initial(I)^2\subset \initial(I^2)$.
\end{lem}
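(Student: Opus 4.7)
The plan is to reduce to checking monomial generators and then exploit the multiplicativity of the leading term with respect to the monomial order.

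First I would note that $\initial(I)^2$ is a monomial ideal (being the square of one), so it suffices to verify the inclusion on its monomial generators. Any such generator has the form $m = m_1 m_2$ with $m_1, m_2 \in \initial(I)$ monomials, and any monomial in $\initial(I)$ is the leading monomial of some element of $I$: for each $i=1,2$, pick $f_i \in I$ with $\initial(f_i) = m_i$.

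The main step is the observation that for any monomial order one has $\initial(f_1 f_2) = \initial(f_1)\,\initial(f_2)$, since the leading term of a product is the product of the leading terms (this uses only that the order is multiplicative). Combined with $f_1 f_2 \in I^2$, this gives
\[
m_1 m_2 \;=\; \initial(f_1)\,\initial(f_2) \;=\; \initial(f_1 f_2) \;\in\; \initial(I^2),
\]
so every monomial generator of $\initial(I)^2$ lies in $\initial(I^2)$, as required.

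There is no real obstacle here; the only subtlety is remembering that $\initial(I)^2$ is monomial, so one need not worry about general $A$-linear combinations, and that each monomial of $\initial(I)$ genuinely arises as a leading term (not merely as a divisor of one), which follows from the definition of $\initial(I)$ as the vector space spanned by all leading monomials of elements of $I$.
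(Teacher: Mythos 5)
Your proof is correct. The paper states this lemma as an ``easy observation'' and gives no proof, so there is nothing to compare against; your argument is the standard one. The essential points are all in place: $\initial(I)^2$ is a monomial ideal generated by products of leading monomials of elements of $I$, the leading term of a product is the product of the leading terms (because monomial orders are compatible with multiplication, so no cancellation at the top term can occur), and $f_1f_2\in I^2$. One small clarification on your closing parenthetical: the relevant fact is that \emph{every} monomial lying in the initial ideal $\initial(I)$ is itself the leading monomial of some element of $I$ (if $m=n\cdot\initial(f)$ then $m=\initial(nf)$), so there is no distinction to worry about between generators and arbitrary monomials of $\initial(I)$; this makes the reduction to ``generators'' even cleaner than you suggest.
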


\begin{rem}
In general we have $\initial(U)^2\subsetneq\initial(U^2)$. Consider $U=\spn(x_1^2+x_2^2)^\perp\subset A_2$ with $n\ge 3$. Then $U$ is spanned by all monomials except for $x_1^2$ and $x_2^2$ and by the binomial $x_1^2-x_2^2$. 
The initial ideal $\initial(U)_2$ is spanned by all monomials except for $x_1^2$, i.e. $\initial(U)_2=\spn(x_1^2)^\perp$. Now one easily checks that $\codim \initial(U^2)_{4}=\codim U^2=2$ and $\codim (\initial(U)_2)^2=n$.
\end{rem}

We want to compare the following two values.

\begin{dfn}
For $n\ge 2$ and $d,k\ge 1$ let
\begin{align*}
m(n,d,k)&=\max\{\codim U^2\colon U\subset A_d\text{ subspace},\, \codim U=k\},\\
m^0(n,d,k)&=\max\{\codim U^2\colon U\subset A_d\text{ \bpf\ subspace},\, \codim U=k\}.
\end{align*}
We say that a subspace $U\subset A_d$ of codimension $k$ \todfn{realizes} $m(n,d,k)$ (resp. $m^0(n,d,k)$) if $\codim U^2=m(n,d,k)$ (resp. $=m^0(n,d,k)$).
\end{dfn}

As the following proposition shows, the number $m(n,d,k)$ can be computed combinatorially.

\begin{prop}[{\cite[Proposition 2.2]{bc2018}}]
\label{prop:ss_is_minimal}
For all positive integers $n\ge 2,\, d\ge 2 $ and $k$, there exists a strongly stable subspace $U\subset A_d$ of codimension $k$ such that
\[
m(n,d,k) = \codim U^2.
\]
\end{prop}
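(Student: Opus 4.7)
The plan is to show that the generic initial ideal construction replaces any optimal subspace by a strongly stable one of the same codimension without decreasing $\codim U^2$. I would start with an arbitrary subspace $U\subset A_d$ of codimension $k$ realizing $m(n,d,k)$ and set $I=(U)\subset A$, the ideal it generates. Since $I_e=A_{e-d}\cdot U$ for $e\ge d$, one has in particular $I_d=U$ and $I^2_{2d}=U^2$.

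Next, by \cref{prop:gin_existence} applied separately to $I$ and to $I^2$, there is a Zariski-open dense subset $V_1\subset\gl_n(\C)$ on which $\initial(GI)$ is constantly equal to $\gin(I)$, and likewise an open dense $V_2$ for $\gin(I^2)$. I would pick a single $G\in V_1\cap V_2$, which exists because $\gl_n(\C)$ is irreducible, and note that $G(I^2)=(GI)^2$. Setting $U':=\gin(I)_d$, \cref{prop:gin_is_stongly_stable} tells us that $U'$ is strongly stable, and since passing to an initial ideal preserves Hilbert functions, $\codim U'=\codim U=k$.

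The crucial step is to compare $\codim (U')^2$ with $\codim U^2$. Applying \cref{lem:powers_and_initial} to the ideal $GI$ and using $G(I^2)=(GI)^2$ yields
\[
\gin(I)^2 \;=\; \initial(GI)^2 \;\subset\; \initial\bigl((GI)^2\bigr) \;=\; \initial\bigl(G(I^2)\bigr) \;=\; \gin(I^2).
\]
Restricting to degree $2d$ and observing that $(U')^2$ sits inside the degree-$2d$ piece of the ideal $\gin(I)^2$ gives $(U')^2\subset\gin(I^2)_{2d}$, so
\[
\dim(U')^2 \;\le\; \dim\gin(I^2)_{2d} \;=\; \dim I^2_{2d} \;=\; \dim U^2,
\]
equivalently $\codim(U')^2\ge\codim U^2=m(n,d,k)$. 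Maximality of $U$ then forces equality, and $U'$ is a strongly stable subspace of codimension $k$ realizing $m(n,d,k)$. The only subtlety in the argument is aligning the generic initial ideals of $I$ and of $I^2$ via a common coordinate change $G$: without this, \cref{lem:powers_and_initial} would only relate specific initial ideals rather than generic ones. That obstacle is resolved trivially by intersecting two open dense subsets of the irreducible variety $\gl_n(\C)$, and the remainder reduces to standard Hilbert-function bookkeeping.
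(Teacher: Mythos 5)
Your proof is correct and follows exactly the route the paper sets up: it combines generic initial ideals (\cref{prop:gin_existence}, \cref{prop:gin_is_stongly_stable}) with the containment $\initial(I)^2\subset\initial(I^2)$ of \cref{lem:powers_and_initial}, and the key technical care — choosing a single generic $G$ in the intersection of the two open sets for $\gin(I)$ and $\gin(I^2)$ — is handled properly. The paper itself defers to \cite{bc2018} for the proof, but the preliminaries it introduces immediately before the statement make clear this is the intended argument, and your version is complete and correct.
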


\begin{rem}
\label{rem:HF_ss_subspace}
In fact, any strongly stable subspaces $U\subset A_d$ of codimension $k\le d$ is the space of all forms of degree $d$ vanishing at some $k$ points (counted with multiplicity).
Or equivalently, the Hilbert function $t\mapsto\dim\, (A/\id{U})_t$ is equal to $k$ for any $t\ge d$.
However, it is not clear in general which configuration of $k$ points realizes $m(n,d,k)$.
\end{rem}

\begin{rem}
\label{rem:example_ss}
For small $n,d,k$ this is a list of $m(n,d,k)$ for $n=2,3,4,5,6$. This has been calculated using {\ttfamily SAGE} \cite{sage} by first finding all strongly stable subspaces of some fixed codimension and then finding the maximum of all $\codim U^2$.

\begin{table*}[ht]\centering
\ra{1.3}
\setlength\tabcolsep{4.5pt}
\begin{tabular}{@{}rrrrrrrrrrcrrrrrrrrr@{}}\toprule
&\multicolumn{9}{c}{$n=3$} & \phantom{abc} & \multicolumn{8}{c}{$n=4$}\\
\midrule
$d=$ && $2$ & $3$ & $4$ & $5$ & $6$ & $7$ & $8$ & $9$ && $2$ & $3$ & $4$ & $5$ & $6$ & $7$ & $8$ & $9$\\ \midrule
$\codim U=$\\
$1$ && $3$ & $3$ & $3$ & $3$ & $3$ & $3$ & $3$ & $3$ && $4$ & $4$ & $4$ & $4$ & $4$ & $4$ & $4$ & $4$\\
$2$ && $6$ & $6$ & $6$ & $6$ & $6$ & $6$ & $6$ & $6$ && $8$ & $8$ & $8$ & $8$ & $8$ & $8$ & $8$ & $8$\\
$3$ && $10$ & $10$ & $10$ & $10$ & $10$ & $10$ & $10$ & $10$ && $13$ & $13$ & $13$ & $13$ & $13$ & $13$ & $13$ & $13$\\
$4$ && $12$ & $13$ & $13$ & $13$ & $13$ & $13$ & $13$ & $13$ && $20$ & $20$ & $20$ & $20$ & $20$ & $20$ & $20$ & $20$\\
$5$ && $14$ & $16$ & $17$ & $16$ & $16$ & $16$ & $16$ & $16$ && $23$ & $24$ & $25$ & $24$ & $24$ & $24$ & $24$ & $24$\\
$6$ && $-$ & $21$ & $21$ & $21$ & $21$ & $21$ & $21$ & $21$ && $26$ & $29$ & $29$ & $31$ & $28$ & $28$ & $28$ & $28$\\
$7$ && $-$ & $23$ & $24$ & $24$ & $25$ & $24$ & $24$ & $24$ && $30$ & $35$ & $35$ & $35$ & $37$ & $35$ & $35$ & $35$\\
$8$ && $-$ & $25$ & $27$ & $27$ & $28$ & $29$ & $27$ & $27$ && $32$ & $39$ & $40$ & $41$ & $41$ & $43$ & $40$ & $40$\\
$9$ && $-$ & $27$ & $30$ & $31$ & $31$ & $32$ & $33$ & $31$ && $34$ & $45$ & $45$ & $45$ & $47$ & $47$ & $49$ & $45$\\
\bottomrule
\bottomrule
&\multicolumn{9}{c}{$n=5$} & \phantom{abc} & \multicolumn{8}{c}{$n=6$}\\
\midrule
$d=$ && $2$ & $3$ & $4$ & $5$ & $6$ & $7$ & $8$ & $9$ && $2$ & $3$ & $4$ & $5$ & $6$ & $7$ & $8$ & $9$\\ \midrule
$\codim U=$\\
$1$ && $5$ & $5$ & $5$ & $5$ & $5$ & $5$ & $5$ & $5$ && $6$ & $6$ & $6$ & $6$ & $6$ & $6$ & $6$ & $6$\\
$2$ && $10$ & $10$ & $10$ & $10$ & $10$ & $10$ & $10$ & $10$ && $12$ & $12$ & $12$ & $12$ & $12$ & $12$ & $12$ & $12$\\
$3$ && $17$ & $16$ & $16$ & $16$ & $16$ & $16$ & $16$ & $16$ && $21$ & $19$ & $19$ & $19$ & $19$ & $19$ & $19$ & $19$\\
$4$ && $24$ & $25$ & $24$ & $24$ & $24$ & $24$ & $24$ & $24$ && $28$ & $31$ & $28$ & $28$ & $28$ & $28$ & $28$ & $28$\\
$5$ && $35$ & $35$ & $35$ & $35$ & $35$ & $35$ & $35$ & $35$ && $40$ & $40$ & $41$ & $40$ & $40$ & $40$ & $40$ & $40$\\
$6$ && $39$ & $40$ & $40$ & $41$ & $40$ & $40$ & $40$ & $40$ && $56$ & $56$ & $56$ & $56$ & $56$ & $56$ & $56$ & $56$\\
$7$ && $43$ & $47$ & $45$ & $46$ & $49$ & $45$ & $45$ & $45$ && $61$ & $62$ & $62$ & $62$ & $62$ & $62$ & $62$ & $62$\\
$8$ && $48$ & $54$ & $55$ & $54$ & $54$ & $57$ & $54$ & $54$ && $66$ & $71$ & $68$ & $68$ & $68$ & $71$ & $68$ & $68$\\
$9$ && $55$ & $60$ & $60$ & $63$ & $61$ & $62$ & $65$ & $59$ && $73$ & $79$ & $81$ & $79$ & $79$ & $79$ & $81$ & $79$\\
\bottomrule
\end{tabular}
\end{table*}

We have $m(n,d,1)=n$ and $m(n,d,2)=2n$ in every case shown in the table. This can also easily be checked to hold in general.

Moreover, we see that in these examples $m(n,d,k)=m(n,k,k)$ for any $d\ge k$, i.e. values are constant on the right side of the diagonal. This is always true as we show later (see \cref{cor:ss_bound_independent_of_d}).
\end{rem}

\begin{rem}
\label{rem:asymptotic_behavior}
We now discuss the asymptotic behavior of $m(n,d,k)$. As we have mentioned, for fixed $n$ and $k$, the number $m(n,d,k)$ stabilizes for large $d$. To be more precise, we have $m(n,d,k)=m(n,k,k)$ for every $d\ge k$.

Determining exactly the growth of $m(n,d,k)$ for increasing $n$ or $k$ seems to be a rather difficult combinatorial problem. However, it is clear that increasing $n$ or $k$ while fixing the other value and the degree, results in larger values for $m(n,d,k)$. More precisely, we do get a lower bound for the growth using the Alexander-Hirschowitz Theorem.

Let $X\subset\P^{n-1}$ be a set of $k$ general points, in the sense of the Alexander-Hirschowitz Theorem. Assume that $d$ is large enough, then the subspace $U$ of forms in $A_d$ vanishing on $X$ has codimension $k$. By the Alexander-Hirschowitz Theorem, the space $V$ of all forms of degree $2d$ vanishing to order at least 2 at every point of $X$ has codimension $kn$.
Since $U^2\subset V$, it follows that $\codim U^2\ge kn$, especially $m(n,d,k)\ge kn$. And therefore also $m(n,d',k)\ge kn$ for every $d'\ge k$.
\end{rem}

\begin{rem}
Let $U\subset A_d$ be a strongly stable subspace such that $U\neq A_d$. If $x_1^d\in U$, we see from the definition that $U=A_d$. Therefore, $x_1^d\in U^\perp$ which reveals a \bp\ of $U$.
This shows that if $f\in U^2$, then $f$ is singular at the point $(1 : 0 :\dots : 0)$.
\end{rem}

For later reference, we now consider \bpf\ monomial subspaces $U$ and find bounds for $\codim U^2$. This will be needed later on as we will reduce to the monomial case.

\begin{lem}
\label{lem:monomial_codim_1}
Let $d\ge 2$ and let $U\subset A_{d}$ be a \bpf, monomial subspace of codimension 1. Then the following hold: 
\begin{enumerate}
\item If $d=2$ then $\codim U^2=2$,
\item if $d\ge 3$ then $\codim U^2\in\{0,1\}$.
\end{enumerate}
\end{lem}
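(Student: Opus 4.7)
Since $U$ is monomial of codimension one, $U^{\perp}=\spn(M)$ for a single monomial $M=x^{\alpha}\in A_d$, and a direct check shows that $U$ is \bpf\ precisely when $M$ is not a pure power, i.e.\ $|\supp(\alpha)|\ge 2$. As $U$ is monomial so is $U^2$, and a monomial $N=x^{\gamma}\in A_{2d}$ lies in $U^2$ iff $N=N_1 N_2$ for some $N_1,N_2\in A_d\setminus\{M\}$. Setting $P_\gamma:=\{\beta\in\Z_{\ge 0}^n:|\beta|=d,\,\beta\le\gamma\}$, this amounts to asking whether some $\beta\in P_\gamma$ avoids both $\alpha$ and $\gamma-\alpha$.

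Two easy cases dispose of most $N$. If $M\nmid N$ then $\alpha\not\le\gamma$, so neither $\alpha$ nor $\gamma-\alpha$ lies in $P_\gamma$ while $P_\gamma\neq\emptyset$; hence $N\in U^2$. If $N=M^2$ then for any two distinct $i,j\in\supp(\alpha)$ the shift $\alpha+e_i-e_j$ lies in $P_\gamma$ and is different from both $\alpha$ and $\gamma-\alpha=\alpha$, so again $N\in U^2$. The remaining case is $M\mid N$ with $\gamma\neq 2\alpha$, in which $\alpha$ and $\gamma-\alpha$ are two distinct elements of $P_\gamma$, and $N\notin U^2$ is equivalent to $P_\gamma=\{\alpha,\gamma-\alpha\}$.

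The crux is the classification of those $\gamma$ for which $|P_\gamma|=2$. I plan to use a swap argument: each admissible pair $(i,j)$ with $j\in\supp(\alpha)$, $i\in\supp(\gamma-\alpha)$, $i\neq j$ yields an element $\alpha+e_i-e_j$ of $P_\gamma$ distinct from $\alpha$, and this element coincides with $\gamma-\alpha$ only when $\gamma=2\alpha+e_i-e_j$, a relation that determines $(i,j)$ uniquely. Conversely, if $|P_\gamma|\ge 2$ then picking any $\beta\in P_\gamma\setminus\{\alpha\}$ the difference $\beta-\alpha$ has coordinates of both signs, which yields an admissible swap. Thus $|P_\gamma|=2$ forces the number of admissible swaps to be exactly one. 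Counting them gives $s_1 s_2-s_3$, where $s_1=|\supp(\alpha)|$, $s_2=|\supp(\gamma-\alpha)|$, and $s_3=|\supp(\alpha)\cap\supp(\gamma-\alpha)|$. Combined with $s_1\ge 2$ and $s_2\ge 1$, the equation $s_1 s_2-s_3=1$ is satisfied only by $s_1=2$, $s_2=1$, $s_3=1$, which pins down $\alpha=(d-1)e_i+e_j$ and $\gamma=(2d-1)e_i+e_j$ for some ordered pair $i\neq j$.

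From this classification both statements follow. The monomials in $A_{2d}\setminus U^2$ are exactly those of the form $x_i^{2d-1}x_j$ for ordered pairs $(i,j)$ with $M=x_i^{d-1}x_j$. If $d\ge 3$ the exponents in $x_i^{d-1}x_j$ are distinct, so $M$ admits at most one such labelling, producing at most one problematic $N$ and giving $\codim U^2\in\{0,1\}$. If $d=2$ the monomial $M=x_ix_j$ equals $x_k^{d-1}x_l$ via both orderings $(i,j)$ and $(j,i)$, producing exactly the two distinct monomials $x_i^3x_j$ and $x_ix_j^3$ missing from $U^2$, so $\codim U^2=2$. The combinatorial swap count is the main obstacle; everything else is bookkeeping.
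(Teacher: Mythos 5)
Your proof is correct and ultimately rests on the same core observation the paper uses: the monomials missing from $U^2$ are exactly those $N$ of degree $2d$ for which every degree-$d$ divisor is either $M$ or $N/M$. Where you diverge is in how this is established. The paper simply asserts (without proof) that, up to permutation, the only degree-$2d$ monomials with a unique unordered factorization into two degree-$d$ monomials are $x_1^{2d}$ and $x_1^{2d-1}x_2$, then matches these two against $M$. You instead fix $M=x^\alpha$ and directly classify the $\gamma$ with $P_\gamma=\{\alpha,\gamma-\alpha\}$ via the swap count $s_1s_2-s_3$. This makes the combinatorial step the paper glosses over fully explicit, at the cost of a somewhat longer argument; the paper's route is shorter and treats the classification as self-evident. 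One small presentational point: the step from $s_1=2,\,s_2=1,\,s_3=1$ to $\alpha=(d-1)e_i+e_j$ uses not only the support sizes but also the identity $\alpha+e_i-e_j=\gamma-\alpha$ (i.e.\ that the unique swap image must be the sole element of $P_\gamma\setminus\{\alpha\}$, which forces $\gamma-\alpha=de_i$ and hence $\alpha_j=1$); you set up this relation earlier, but it would be clearer to invoke it explicitly when ``pinning down'' $\alpha$, rather than attributing the conclusion to the support counts alone.
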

\begin{proof}
Let $U^\perp=\spn(M)$ for some monomial $M\in A_d$. Up to permutation of the variables there are only two monomials in $A_{2d}$ that have only one decomposition into a product of monomials of degree $d$, those are $x_1^{2d}$ and $x_1^{2d-1}x_2$.

Let $T\in A_{2d}$ be any monomial that is not $x_1^{2d}$ or $x_1^{2d-1}x_2$ (after permutation of the variables). Then there are two decompositions into a product of two monomials of degree $d$. Especially, one of the decompositions does not use the monomial $M$, hence $T\in U^2$.

The decompositions of the two monomials above are $x_1^{2d}=(x_1^d)(x_1^d)$ and $x_1^{2d-1}x_2=x_1^d(x_1^{d-1}x_2)$. 
Therefore, both are not contained in $U^2$ if and only if $M=x_1^d$, and only $x_1^{2d-1}x_2$ is not contained in $U^2$ if and only if $M=x_1^{d-1}x_2$.
In the first case, $U$ has a \bp, in the second case the only monomial not contained in $U^2$ is $x_1^{2d-1}x_2$ if $d\ge 3$ and thus $\codim U^2=1$. 

If $d=2$ and $U$ is \bpf, $U^\perp$ is spanned by $x_1x_2$ (after permutation of the variables). We easily check that $U^2$ contains every monomial of degree 4 except for $x_1^3x_2$ and $x_1x_2^3$. Hence $\codim U^2=2$.
\end{proof}

\begin{lem}
\label{lem:monomial_subspaces_of_codimension_2}
Let $U\subset A_d$ be a \bpf, monomial subspace of codimension 2. Then the following hold: 
\begin{enumerate}
\item $\codim U^2\le 6$ if $d=2$,
\item $\codim U^2\le 4$ if $d\in\{3,4\}$, 
\item $\codim U^2\le 2$ if $d\ge 5$.
\end{enumerate}
Moreover the bound is tight for $d\le 4$.
\end{lem}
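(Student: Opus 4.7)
The plan is to analyze $B := \{T \in A_{2d} : T \notin U^2\}$ directly. Let $U^\perp = \spn(M_1, M_2)$ with distinct monomials $M_1, M_2 \in A_d$; the BPF condition for $d \ge 2$ is equivalent to neither $M_i$ being a pure power $x_j^d$, which I assume throughout. A monomial $T \in A_{2d}$ lies in $B$ iff every unordered decomposition $T = M \cdot M'$ into degree-$d$ monomials satisfies $\{M, M'\} \cap \{M_1, M_2\} \ne \emptyset$; in particular $T$ has at most two such decompositions.

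The first reduction I would make is the support bound $\supp(T) \subseteq \supp(M_1) \cup \supp(M_2)$ for every $T \in B$. If some variable $x_k$ outside this union appears in $T$ with exponent $\ge 2$, splitting $x_k$ between the two factors produces a decomposition avoiding $M_1, M_2$. If $x_k$ appears with exponent exactly $1$, then the unique $x_k$-bearing factor in any decomposition cannot equal $M_i$, forcing every degree-$d$ divisor of $T/x_k$ to lie in $\{M_1, M_2\}$; but a monomial of degree $2d-1$ with $\le 2$ degree-$d$ divisors must be $x_i^{2d-1}$ or $x_i^{2d-2} x_j$, each of which admits the pure-power $x_i^d$ as a divisor, contradicting BPF.

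Next I would classify $T \in B$ by the orbits of the involution $M \mapsto T/M$ on the set of degree-$d$ divisors of $T$. Writing $N(T)$ for the number of orbits, $T \in B$ forces $N(T) \le 2$ and each orbit to contain $M_1$ or $M_2$, leading to three types:
\begin{itemize}
\item[] Type (I): $N(T)=1$ with a pair orbit, so $T = x_a^{2d-1} x_b$ for some $a \ne b$ with $x_a^{d-1} x_b \in \{M_1, M_2\}$;
\item[] Type (IIa): $N(T)=2$ with two pair orbits and divisor set $\{M_1, M_2, T/M_1, T/M_2\}$, so $N_\mathrm{ord}(T) = 4$;
\item[] Type (IIb): $N(T)=2$ with one fixed orbit $\{M_i\}$ and one pair orbit, so $T = M_i^2$ with $M_i = x_a^{d-1} x_b$ and $M_{3-i} = x_a^{d-2} x_b^2$.
\end{itemize}
Using the 2-variable divisor count $N_\mathrm{ord}(x_1^\alpha x_2^{2d-\alpha}) = \min(\alpha, 2d-\alpha) + 1$, Type (IIa) monomials with 2-variable support are of the form $x_i^3 x_j^{2d-3}$; the only 3-variable Type (IIa) monomials are $x_i^{2d-2} x_j x_k$; and $M_i^2$ with $\lvert \supp(M_i) \rvert \ge 3$ has too many divisors to fit Type (IIb).

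Finally I would carry out a case split on the shape of $(M_1, M_2)$ (same pair of variables, overlap-one, disjoint pair, or some $M_i$ with $\ge 3$ variables) and sum the Type (I), (IIa), (IIb) contributions in each case to verify the stated bound. Tightness is exhibited by $\{M_1, M_2\} = \{x_1 x_2, x_1 x_3\}$ for $d = 2$ (giving $|B| = 6$: four Type (I) monomials $x_1^3 x_2, x_1 x_2^3, x_1^3 x_3, x_1 x_3^3$ and two Type (IIa) monomials $x_1 x_2^2 x_3, x_1 x_2 x_3^2$), by $\{M_1, M_2\} = \{x_1^2 x_2, x_1 x_2^2\}$ for $d = 3$ (giving $|B| = 4$: two Type (I) and $M_1^2, M_2^2$ as Type (IIb)), and by $\{M_1, M_2\} = \{x_1^3 x_2, x_1 x_2^3\}$ for $d = 4$ (giving $|B| = 4$: two Type (I) and two Type (IIa) monomials $x_1^5 x_2^3, x_1^3 x_2^5$). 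The main obstacle will be the coupling between the three types for small $d$: whenever an $M_i$ has the shape $x_a^{d-1} x_b$ triggering Type (I) (and possibly Type (IIb)), one must verify Type (IIa) does not conspire to exceed the bound, which for $d \ge 5$ reduces to checking that any 2-variable $T = \mathrm{lcm}(M_1, M_2) \cdot H$ with $N_\mathrm{ord}(T) = 4$ forces either a pure-power $M_i$ or duplicates a shape already counted.
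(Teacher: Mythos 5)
Your proposal is correct and takes essentially the same approach as the paper, whose proof of this lemma is a one-liner: ``The proof works the same as for \cref{lem:monomial_codim_1} by considering all monomials with at most two distinct decompositions.'' Your support reduction, orbit classification into Types (I), (IIa), (IIb), and the three tightness witnesses are precisely the details the paper leaves implicit, and they all check out.
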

\begin{proof}
The proof works the same as for \cref{lem:monomial_codim_1} by considering all monomials with at most two distinct decompositions.
\end{proof}

For the following sections, we need some knowledge about the Hilbert functions of ideals generated by subspaces. We introduce theorems of Macaulay and Gotzmann concerning Hilbert functions and Green's Hyperplane Restriction Theorem for later reference.

\begin{dfn}
Let $a,d\in\N$, then $a$ can be uniquely written in the form
\[
a=\bin{k(d)}{d}+\bin{k(d-1)}{d-1}+\dots+\bin{k(1)}{1},
\]
where $k(d)>k(d-1)>\dots>k(1)\ge 0$, called the \todfn{$d$-th Macaulay representation} of $a$ (see \cite[Lemma 4.2.6.]{bh1998}). For any integers $s,t\in\Z$ define
\[
a_{(d)}|^s_t: = \bin{k(d)+s}{d+t}+\bin{k(d-1)+s}{d-1+t}+\dots+\bin{k(1)+s}{1+t}.
\]
Furthermore for $a<b$ we define $\bin{a}{b}=0$.
\end{dfn}

\begin{thm}[Macaulay's Theorem, {\cite[Corollary C.7.]{ikl1999}}, {\cite[Theorem 4.2.10]{bh1998}}]
\label{thm:macaulay}
Let $I\subset A$ be a homogeneous ideal and let $H=(h_i)_{i\ge 0}$ be the Hilbert function of $I$. 
Then 
\begin{enumerate}
\item $h_{i+1}\le (h_i)_{(i)}|^{1}_{1}$ for every $i\ge 0$, and
\item if there exists $j\in\N$ such that $j\ge h_j$, then $h_i\ge h_{i+1}$ for every $i\ge j$.
\end{enumerate}
\end{thm}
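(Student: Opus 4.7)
The plan is to reduce both statements to assertions about lex-segment ideals, and then evaluate both sides of the inequality explicitly using the Macaulay representation. I would proceed in three steps: invoke a compression principle to reduce to the lex-segment case, perform the combinatorial count in that case for (i), and then iterate (i) together with a Macaulay-representation argument for (ii).

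For the reduction, I would appeal to the classical compression theorem: among all subspaces $V \subset A_i$ of fixed codimension $h_i$, the dimension of $A_1 \cdot V \subset A_{i+1}$ is minimized when $V$ is the lex segment $L_i$ of that codimension. Hence it suffices to verify (i) when $I$ is the lex-segment ideal $L$ with the prescribed Hilbert function. For (i), writing $h_i = \binom{k(i)}{i} + \binom{k(i-1)}{i-1} + \cdots + \binom{k(1)}{1}$, I would count the $h_i$ monomials in $(A/L)_i$ by a block-decomposition according to the variables they involve: the $s$-th summand $\binom{k(s)}{s}$ corresponds to a block of monomials of prescribed shape involving $x_1, \ldots, x_{k(s)}$. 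Using the Borel-fixed nature of lex segments, multiplication by $x_1, \ldots, x_n$ expands each such block to one of size $\binom{k(s)+1}{s+1}$, and summing yields exactly $(h_i)_{(i)}|^1_1$ monomials in the complement of $A_1 \cdot L_i \subset A_{i+1}$. The compression step propagates this bound back to the original $I$.

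For (ii), the hypothesis $j \ge h_j$ forces $k(j) \le j$ in the $j$-th Macaulay representation, since otherwise $\binom{k(j)}{j} \ge \binom{j+1}{j} = j+1 > h_j$, a contradiction. A short recursion then shows that $k(s) = s$ for the top $h_j$ indices and the remaining $k(s)$ contribute zero, so $h_j$ is expressed as a sum of $h_j$ ones. Substituting into the Macaulay shift gives $(h_j)_{(j)}|^1_1 = \binom{j+1}{j+1} + \binom{j}{j} + \cdots = h_j$ as well, and (i) yields $h_{j+1} \le h_j \le j < j+1$. Since the hypothesis $j+1 \ge h_{j+1}$ persists, iteration delivers $h_i \ge h_{i+1}$ for every $i \ge j$.

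The main obstacle is the compression theorem itself, which is the combinatorial heart of Macaulay's result and whose standard proof proceeds via iterated compressions on monomial ideals together with the fact that the generic initial ideal is Borel fixed and preserves the Hilbert function. Granting this—as is done by citing Bruns--Herzog or Iarrobino--Kanev—the remaining work is elementary bookkeeping with binomial coefficients in the Macaulay representation, and neither step (i) nor step (ii) presents any further conceptual difficulty.
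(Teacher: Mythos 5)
The paper does not prove this theorem itself — it is imported directly from Iarrobino--Kleiman (Cor.~C.7) and Bruns--Herzog (Thm.~4.2.10) — so there is no internal argument to compare your proposal against. What you sketch is the standard textbook proof and it is correct in outline. For (i), reducing to lex-segment ideals via compression (which, as you correctly flag, is the non-trivial core and itself requires passing to Borel-fixed monomial ideals, e.g.\ via generic initial ideals) and then counting in the Macaulay representation is exactly how the cited sources proceed. For (ii), your key observation is right: $j \ge h_j$ forces $k(j) \le j$, so the $j$-th Macaulay representation of $h_j$ collapses to $\binom{j}{j} + \binom{j-1}{j-1} + \cdots + \binom{j-h_j+1}{j-h_j+1}$ followed by vanishing terms, giving $(h_j)_{(j)}|^1_1 = h_j$; combining with (i) yields $h_{j+1} \le h_j$, and since $h_{j+1} \le j < j+1$ the hypothesis propagates and the induction closes. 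The one place a full write-up would need more care is the block-by-block count in (i) — the claim that multiplying the lex complement by $A_1$ expands each $\binom{k(s)}{s}$-block to a $\binom{k(s)+1}{s+1}$-block is stated quickly and requires identifying the lex structure of the complement precisely — but this is a standard lemma and your level of detail is appropriate given that the paper treats the entire theorem as a black box.
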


In fact, Macaulay showed in 1927 \cite{macaulay1927} that whenever we have a sequence $H=(h_i)_{i\ge 0}$ which satisfies property (i) in \cref{thm:macaulay}, there exists $n\ge 2$ and a homogeneous ideal $I\subset A$ such that the Hilbert function of $I$ is exactly $H$. This ideal $I$ can even be chosen to be monomial.

\begin{thm}[Gotzmann's Persistence Theorem, {\cite[Corollary C.17.]{ikl1999}}, {\cite[Theorem 2.6]{ams2018}}]
\label{thm:gotzmannpersistence}
Let $d \ge 0$ be an integer and let $I$ be a homogeneous ideal that is generated in degrees at most $d$ $(I=\id{I_{\le d}})$. Denote by $H=(h_i)_{i\ge 0}$ the Hilbert function of $I$. 
If $h_{d+1}=(h_d)_{(d)}|^{1}_{1}$, then 
$h_{d+l}=(h_d)_{(d)}|^l_l$ for all $l\ge 1$.
\end{thm}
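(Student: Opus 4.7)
The plan is to reduce the statement to the case of lex-segment ideals and then run a combinatorial induction on the degree. First I would invoke Macaulay's Theorem (\cref{thm:macaulay}) together with the stronger existence part of Macaulay's 1927 result cited just before: for any admissible Hilbert function there is a lex-segment ideal (indeed a monomial ideal whose degree $t$ component is the initial segment in the lex order) realizing it. Applying this to the Hilbert function of $I$ yields a lex-segment ideal $L$ with $\dim L_t=h_t$ for every $t$. Because the hypotheses ($I$ being generated in degrees $\le d$, i.e.\ $\dim A_1\cdot I_{d-1}=h_d$ or $I_{d-1}=0$, and the equality $h_{d+1}=(h_d)_{(d)}|^1_1$) and the conclusion both depend only on the Hilbert function, it is enough to prove the theorem for $L$.

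Next I would analyse the lex-segment case combinatorially. Write the Macaulay decomposition $h_d=\binom{k(d)}{d}+\binom{k(d-1)}{d-1}+\dots+\binom{k(1)}{1}$ with $k(d)>k(d-1)>\dots>k(1)\ge 0$. A direct description of the lex-segment $L_d$ says that its monomials split, according to the initial power of $x_1$, into blocks of sizes $\binom{k(d)}{d}$, $\binom{k(d-1)}{d-1}$, \ldots; each block is itself a lex-segment in a smaller polynomial ring. Since $L$ is generated in degrees $\le d$, $L_{d+1}=A_1\cdot L_d$ as a monomial space. The equality $\dim L_{d+1}=(h_d)_{(d)}|^1_1$ is the equality case in Macaulay's bound for the expansion $A_1\cdot L_d$, and a straightforward block-by-block count shows it forces $L_{d+1}$ to be the lex-segment of size $(h_d)_{(d)}|^1_1$ and, crucially, that its Macaulay decomposition is $\binom{k(d)+1}{d+1}+\binom{k(d-1)+1}{d}+\dots+\binom{k(1)+1}{2}$, i.e.\ the Macaulay symbol of $h_d$ with each numerator and denominator increased by one.

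With this description of $L_{d+1}$, the same argument applies one degree higher: $L_{d+2}=A_1\cdot L_{d+1}$ is a lex-segment whose size lies between the lower bound (given by the Macaulay symbol of $h_{d+1}$, which by construction equals $(h_d)_{(d)}|^2_2$) and Macaulay's upper bound (also equal to $(h_d)_{(d)}|^2_2$). Hence equality holds, and the Macaulay decomposition shifts by one more. Formally, I would phrase this as an induction on $l$: the inductive hypothesis is that $L_{d+l}$ is the lex-segment with Macaulay decomposition $\sum_{i=1}^{d}\binom{k(i)+l}{i+l}$, and the step reproduces the combinatorics above.

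The main obstacle is the combinatorial bookkeeping in the second paragraph, i.e.\ showing that equality in Macaulay's growth bound forces the Macaulay symbol to shift uniformly by one. Everything else is either Macaulay's theorem, the passage to a lex-segment representative, or a routine induction. Once this rigidity of the equality case is established, the persistence is automatic. In particular, it is worth noting that the same bookkeeping gives a proof that the value in degree $d+l$ equals $(h_d)_{(d)}|^l_l$ with the explicit Macaulay decomposition $\sum_i\binom{k(i)+l}{i+l}$, which is occasionally useful and stronger than the bare equality of integers stated in the theorem.
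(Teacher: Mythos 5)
The paper does not prove Gotzmann's Persistence Theorem; it cites it from \cite{ikl1999} and \cite{ams2018}. So there is no "paper proof" to compare against, and the question is simply whether your sketch constitutes a valid argument. It does not: the first step, the reduction to lex-segment ideals, has a gap that is not a matter of bookkeeping but is in fact the whole content of the theorem.

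You claim that the hypothesis "$I$ is generated in degrees at most $d$" depends only on the Hilbert function of $A/I$. It does not. The Hilbert function records $\dim I_t$ in each degree, but whether $I_{t+1}=A_1I_t$ or whether $I$ has new minimal generators in degree $t+1$ is extra information; two ideals with the same Hilbert function can have very different generator degrees. So when you replace $I$ by the lex-segment ideal $L$ with the same Hilbert function, you are not entitled to write $L_{d+l}=A_1 L_{d+l-1}$ for $l\ge 2$. In fact, the lex-segment ideal with a given Hilbert function has the \emph{largest} graded Betti numbers among all ideals with that Hilbert function (Bigatti--Hulett--Pardue), so a priori $L$ is \emph{more} likely than $I$ to acquire new generators in degrees above $d$. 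Showing that it does not — equivalently, that $\dim L_{d+l}=\dim A_l L_d$, which is precisely $h_{d+l}=(h_d)_{(d)}|^l_l$ by the easy lex-segment combinatorics — is Gotzmann's theorem itself, so the argument is circular. What the hypothesis $h_{d+1}=(h_d)_{(d)}|^1_1$ does force, via Macaulay's bound, is that $I_{d+1}=A_1 I_d$ and $\dim A_1 I_d=\dim A_1 L_d$; but the passage to $l\ge 2$ requires the genuine rigidity statement (``if $I_d$ grows as slowly as the lex-segment in the first step, it does so in all steps''), which is usually proved by a delicate induction on the number of variables via Green's Hyperplane Restriction Theorem or by Gotzmann's original double induction, not by matching Hilbert functions and appealing to the lex-segment case. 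Relatedly, in your degree-$(d+2)$ step you speak of the Macaulay symbol of $h_{d+1}$ furnishing a ``lower bound'' on $h_{d+2}$; Macaulay's theorem only gives an upper bound, and the absence of a complementary lower bound is exactly the missing ingredient.
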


By Macaulay's Theorem $h_{d+1}\le (h_d)_{(d)}|^{1}_{1}$. Therefore, Gotzmann's Theorem determines the complete Hilbert function whenever we have maximal growth from some degree $d$ to the next degree $d+1$. Namely, the growth is maximal for all following degrees as well. For ideals generated by subspaces, this has the following meaning.

\begin{cor}
\label{cor:macaulay_gotzmann}
Let $U\subset A_d$ be a subspace of codimension $k\le d$ and let $H=(h_i),\ h_i: = h_{\id{U}}(i)$ be the Hilbert function of $\id{U}$. Then
\begin{enumerate}
\item $h_{d+1}=\codim A_1U\le k$ and 
\item if $h_{d+1}=k$, then $h_{d+i}=\codim A_iU=k$ for all $i\ge 1$.
\end{enumerate}
In case (ii) $\sz(U)\neq\emptyset$ is finite.
\end{cor}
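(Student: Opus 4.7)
The plan is to derive both statements from Macaulay's theorem and Gotzmann's persistence theorem, applied with $h_d=\codim U=k\le d$. The crucial combinatorial input, which I would verify first, is the identity $(k)_{(d)}|^l_l=k$ for every $l\ge 0$ whenever $k\le d$: the $d$-th Macaulay representation of such a $k$ is forced to be
\[
k=\binom{d}{d}+\binom{d-1}{d-1}+\dots+\binom{d-k+1}{d-k+1},
\]
a sum of $k$ ones, with all remaining forced positions $\binom{k(i)}{i}$ necessarily satisfying $k(i)<i$ and hence vanishing. The operator $(\cdot)_{(d)}|^l_l$ shifts top and bottom of every binomial by the same amount $l$, preserving each $\binom{m}{m}=1$ and keeping the zero terms zero.

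For part (i), the equality $h_{d+1}=\codim A_1U$ is immediate since $\id{U}$ is generated by $U$ in degree $d$, so $\id{U}_{d+1}=A_1U$. For the inequality, I would invoke part (ii) of Macaulay's theorem (\cref{thm:macaulay}) with $j=d$: the hypothesis $h_d=k\le d$ gives $j\ge h_j$, so the Hilbert function is weakly decreasing from degree $d$ on, and in particular $h_{d+1}\le h_d=k$.

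For part (ii), the hypothesis $h_{d+1}=k$ together with the identity above reads $h_{d+1}=(h_d)_{(d)}|^1_1$, which is precisely the maximal-growth hypothesis of Gotzmann's persistence theorem (\cref{thm:gotzmannpersistence}). Since $\id{U}$ is generated in degrees $\le d$, that theorem applies and yields $h_{d+l}=(h_d)_{(d)}|^l_l=k$ for every $l\ge 1$. The closing assertion that $\sz(U)$ is finite and nonempty is then the standard consequence that a projective scheme whose Hilbert polynomial is the nonzero constant $k$ must be zero-dimensional of degree $k$.

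I do not anticipate any serious obstacle: the entire argument is a direct application of the two named theorems, and the only step requiring care is the bookkeeping behind the identity $(k)_{(d)}|^l_l=k$, which is purely mechanical once the Macaulay representation of $k\le d$ is written out.
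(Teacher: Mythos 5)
Your argument is essentially the same as the paper's: part (i) follows from Macaulay's theorem part (ii) applied with $j=d$, part (ii) computes the $d$-th Macaulay representation of $k\le d$ as a sum of $k$ terms $\binom{d-i}{d-i}$ (hence invariant under the shift $(\cdot)_{(d)}|^l_l$) and applies Gotzmann's persistence, and the final assertion reads off the constant Hilbert polynomial. There is no substantive difference; the only addition on your side is the explicit remark that $\id{U}_{d+1}=A_1U$, which the paper treats as implicit.
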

\begin{proof}
(i): We have $\codim U =h_d =k \le d$ and therefore $\codim A_1U=h_{d+1}\le h_d= k$ by \cref{thm:macaulay} (ii). 

(ii): We first note that the $d$-th Macaulay representation of $h_d$ is given by 
\[
h_d=\bin{d}{d}+\dots+\bin{d-k+1}{d-k+1}=\sum_{i=0}^{k-1} \bin{d-i}{d-i}
\]
and therefore $(h_d)_{(d)}|^{1}_{1}=h_d=k$. Hence, the assumption $h_{d+1}=k=(h_d)_{(d)}|^{1}_{1}$ allows us to use \cref{thm:gotzmannpersistence} from which we get
\[
h_{d+i}=(h_d)_{(d)}|^{i}_{i}=\bin{d+i}{d+i}+\dots+\bin{d-h_d+1+i}{d-h_d+1+i}=k
\]
for every $i\ge 1$.

In (ii) the Hilbert polynomial is the constant polynomial $k$, hence $\sz(U)$ is non-empty and finite. 
\end{proof}

\begin{cor}
\label{cor:degree2d-1}
Let $U\subset A_d$ be a \bpf\ subspace with $\codim U=k\le d$. Then $h_{\id{U}}(2d-1)\le 1$. If $k<d$ then $h_{\id{U}}(2d-1)=0$.
\end{cor}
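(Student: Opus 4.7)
The plan is to iterate Gotzmann's persistence theorem along the powers $A_iU$, converting base-point-freeness into a strict decrease of the Hilbert function at each step.

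First I would observe that if $U$ is base-point-free then so is every $A_iU\subset A_{d+i}$: for any $\xi\in\P^{n-1}$, pick a variable $x_j$ with $x_j(\xi)\ne 0$ and an $f\in U$ with $f(\xi)\ne 0$; then $x_j^if\in A_iU$ does not vanish at $\xi$. Moreover, by Hilbert's Nullstellensatz, base-point-freeness forces $A_l\subset\id{U}$ for $l\gg 0$, so $h_l=0$ eventually.

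Next, for each $i\ge 0$ I would apply \cref{thm:gotzmannpersistence} to the ideal $\id{A_iU}$, which is generated in degree $d+i$ and agrees with $\id{U}$ in every degree $\ge d+i$. Since $h_{d+i}\le k\le d\le d+i$, the same computation as in the proof of \cref{cor:macaulay_gotzmann} gives $(h_{d+i})_{(d+i)}|_1^1=h_{d+i}$. Hence the equality $h_{d+i+1}=h_{d+i}$ would force $h_{d+i+l}=h_{d+i}$ for all $l\ge 0$ by Gotzmann, contradicting the fact that $h_l$ eventually vanishes---unless $h_{d+i}=0$ already. Consequently, for every $i\ge 0$, either $h_{d+i}=0$ or $h_{d+i+1}\le h_{d+i}-1$.

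A short induction on $i$ starting from $h_d=k$ then gives $h_{d+i}\le\max(0,k-i)$ for all $i\ge 0$. Setting $i=d-1$ yields $h_{2d-1}\le\max(0,k-d+1)$, which is $\le 1$ when $k\le d$ and equal to $0$ when $k<d$, proving both assertions. The only delicate point is verifying that the Macaulay--Gotzmann machinery applies at each step, which reduces to the uniform bound $h_{d+i}\le k\le d\le d+i$, so no obstacle arises beyond those already handled in the proof of \cref{cor:macaulay_gotzmann}.
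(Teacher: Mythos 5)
Your proof is correct and follows essentially the same route as the paper's: both arguments hinge on the fact that if $h_{s}=h_{s+1}\neq 0$ for some $s\ge d$, then Gotzmann's persistence (via \cref{cor:macaulay_gotzmann}~(ii), which is exactly the degree-shifted application you spell out) would force the Hilbert function to be eventually constant and nonzero, contradicting base-point-freeness. The only difference is presentational: you unfold the iterated application of \cref{cor:macaulay_gotzmann} to the subspaces $A_iU$ explicitly, whereas the paper leaves that shift implicit, but the induction $h_{d+i}\le\max(0,k-i)$ and the final evaluation at $i=d-1$ are precisely what the paper invokes.
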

\begin{proof}
The Hilbert function of $\id{U}$ has to be smaller than $(\dots,k,k-1,k-2,\dots,1,0)$ (dimension dropping by at least 1 in every degree): indeed, if we had equality in any two consecutive degrees $s$ and $s+1$ with $s\ge d$ such that $h_s\neq 0$, it follows from \cref{cor:macaulay_gotzmann} (ii) that $\vc(U)\neq \emptyset$.
Therefore, we get the inequality on the degree $2d-1$ component of $A/\id{U}$.
\end{proof}

For the degree $2d$ component, there is a stronger result due to Blekherman using Cayley-Bacharach duality.

\begin{thm}[{\cite[Theorem 2.5.]{blekherman2015}}]
\label{thm:degree2d}
Let $n\ge 3,\, d\ge 3$ and let $U\subset A_d$ be a \bpf\ subspace. 
If $\codim U<3d-2$, then $UA_d=A_{2d}$. If $n\ge 4,\, d=2$ and $\codim U<5$, then $UA_2=A_4$.
\end{thm}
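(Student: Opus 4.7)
The plan is to argue by apolarity and convert the failure $UA_d\ne A_{2d}$ into information about an Artinian Gorenstein quotient of $A$, then use Macaulay-style growth together with the \bpf\ hypothesis to derive a contradiction.

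Assume toward contradiction that $UA_d\subsetneq A_{2d}$. Then there exists $0\ne\ell\in A_{2d}^{\vee}$ with $\ell(UA_d)=0$. The symmetric bilinear form $B_\ell\colon A_d\times A_d\to\C$, $(f,g)\mapsto\ell(fg)$, has $U$ in its kernel. Setting $V=\ker B_\ell\supset U$, the subspace $V$ is still \bpf, and $\codim V\le\codim U<3d-2$. Viewing $\ell$ through Macaulay's inverse system, form the apolar (annihilator) ideal $J=\Ann(\ell)\subset A$; then $R=A/J$ is Artinian Gorenstein with socle degree $2d$, $J_d=V$, and the Hilbert function $(h_i)=(\dim R_i)$ is palindromic ($h_i=h_{2d-i}$) with $h_0=h_{2d}=1$ and $h_d=\codim V<3d-2$.

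It remains to derive a contradiction from the combination of $h_d<3d-2$, palindromy, and the \bpf\ hypothesis on $V=J_d$. The \bpf\ condition says that $\langle V\rangle\subset A$ has empty projective zero set, so $A/\langle V\rangle$ has Hilbert function of finite support and $R$, as a further quotient, does too. This pins down a finite window in which $(h_i)$ must rise from $h_d$ and return to $h_{2d}=1$. Applying Macaulay's bound (\cref{thm:macaulay}) and Gotzmann's persistence (\cref{thm:gotzmannpersistence}) to both $R$ and $A/\langle V\rangle$ yields upper estimates for each $h_{d+i}$ in terms of the $d$-th Macaulay representation of $h_d$. Combined with the palindromy $h_{2d-i}=h_i$ and the socle condition $h_{2d}=1$, these estimates force $h_d\ge 3d-2$, contradicting the assumption.

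The main obstacle is the final quantitative step: showing that the interplay between Macaulay growth, palindromy, and base-point freeness actually pinches exactly the constant $3d-2$. This is a Cayley--Bacharach-style calculation, specific to Gorenstein socle degree $2d$, and is also what makes the bound independent of $n$. The case $d=2$, $n\ge 4$ is numerically exceptional because the Macaulay representations of integers $\le 4$ are atypically small, so a uniform formula only yields $3d-2=4$ while the sharp bound is $5$; this case can be handled by analyzing the \bpf\ monomial subspaces of $A_2$ of small codimension directly, paralleling \cref{lem:monomial_subspaces_of_codimension_2}, and then lifting to arbitrary (not necessarily monomial) subspaces via the apolar setup above.
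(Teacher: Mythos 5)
This statement is cited verbatim from Blekherman's paper \cite{blekherman2015} (his Theorem 2.5), and the present paper gives no proof of its own; so there is nothing in the source to compare your attempt against line by line. Your setup is nevertheless the right one and is the one Blekherman uses: pass from the assumed failure $UA_d\neq A_{2d}$ to a linear functional $\ell$ killing $UA_d$, enlarge $U$ to the radical $V=\ker B_\ell\supset U$ (still \bpf), and note that $R=A/\Ann(\ell)$ is an Artinian Gorenstein quotient with socle degree $2d$, $J_d=V$, and palindromic Hilbert function with $h_d=\codim V$. All of this is correct.

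The problem is that everything after that is a placeholder. You write that Macaulay's bound, Gotzmann persistence, palindromy and base-point-freeness together ``force $h_d\ge 3d-2$,'' and then immediately flag this as ``the main obstacle'' without carrying it out --- but that step \emph{is} the theorem. Moreover, as stated the sketch does not obviously work: \cref{thm:macaulay} and \cref{thm:gotzmannpersistence} give upper bounds on growth, and palindromy only forces $h_{d+i}=h_{d-i}$; neither of these, nor their combination with ``$A/\id{V}$ is Artinian,'' visibly pinches out the specific linear-in-$d$ constant $3d-2$. Blekherman's actual argument relies on finer Cayley--Bacharach-type information about the dual socle generator and the base locus of a linear system, not just Macaulay growth estimates, and it is exactly this finer input that makes the bound independent of $n$. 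Until you supply that quantitative step, the proposal is a correct outline of the framework, not a proof. The $d=2$, $n\ge 4$ case has the same issue, and the suggested route of ``analyzing \bpf\ monomial subspaces \dots and then lifting to arbitrary subspaces via the apolar setup'' is additionally problematic: the paper itself warns (\cref{rem:egh}) that reduction to monomial subspaces cannot be expected to work in the \bpf\ setting, and \cref{lem:monomial_subspaces_of_codimension_2} only treats codimension $2$, whereas here you need codimension up to $4$.
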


\begin{rem}
If we would use the same argument as in \cref{cor:degree2d-1}, we only get $h_{\id{U}}(2d)=0$ if $\codim U\le d$, instead of whenever $\codim U < 3d-2$ ($<5$ if $d=2$).

This also shows that the bound $\codim U\le d$ in \cref{cor:degree2d-1} is far off from being necessary to obtain $h_{\id{U}}(2d-1)=0$ in general.

However, \cref{thm:degree2d} only tells us something about $UA_d$ and not about $UA_{d-1}$ and the proof does not easily generalize to other degrees but is very specific to the degree $2d$ component $UA_d$. 
\end{rem}

\begin{dfn}
Let $I\subset A$ be a homogeneous ideal and $p\in A_s$ for some $s\ge 1$. We define the \todfn{ideal quotient}
\[
(I:p): = \bigoplus_{l\ge 0} (I:p)_l
\]
where
\[
(I:p)_l: = \{q\in A_l\colon pq\in I\}\subset A_l
\]
for every $l\ge 0$. If $U\subset A_d$ is a subspace, we write $(U:p): = (\id{U}:p)_{d-s}\subset A_{d-s}$.
\end{dfn}

\begin{lab}
\label{lab:setup_green}
We consider the following setup. Let $I\subset A$ be a homogeneous ideal and $l\in A_1$ a linear form. We have the graded exact sequence
\[
0 \to A/(I:l)(-1) \stackrel{\cdot l}{\to} A/I \to A/\id{I,l} \to 0.
\]
Let $h_i  = \dim (A/I)_i$ and $c_i  = \dim (A/(I,l))_i$.
\end{lab}

In this situation, we have the following theorem due to Green.

\begin{thm}[Green's Hyperplane Restriction Theorem, {\cite[Theorem 1]{green1989}}]
\label{thm:green_thm}
For any $d\ge 0$ and a generic linear form $l\in A_1$ we have
\[
c_d \le (h_d)_{(d)}|^{-1}_0.
\]
\end{thm}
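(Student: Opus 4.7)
The plan is to reduce Green's theorem to a combinatorial statement about strongly stable monomial ideals and then establish that statement by an inductive / extremal argument.

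First I would pass to the monomial case via generic initial ideals. Applying a generic change of coordinates $g\in\gl_n(\C)$ does not change the Hilbert function of $A/I$, and by \cref{prop:gin_existence} and \cref{prop:gin_is_stongly_stable} the initial ideal $\initial(gI)$ (with respect to reverse lexicographic order) is a fixed strongly stable monomial ideal $\gin(I)$. A classical refinement (see Eisenbud, \emph{Commutative Algebra}, Prop.~15.12, or Bayer--Stillman) says that for a generic linear form $\ell\in A_1$, the Hilbert function of $A/(I,\ell)$ agrees in every degree with that of $A/(\gin(I),x_n)$, because $x_n$ plays the role of a generic linear form for revlex-Borel-fixed ideals. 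So without loss of generality we may replace $I$ by a strongly stable monomial ideal $J$ and take $\ell=x_n$.

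Second, in this monomial setting I would translate the claim into a count. A basis of $\bigl(A/(J,x_n)\bigr)_d$ is given by the monomials $m\in A_d\smallsetminus J$ with $\deg_{x_n}(m)=0$. Setting $A'=\C[x_1,\dots,x_{n-1}]$ and $J'=J\cap A'$, the strongly stable property of $J$ implies that $J'$ is strongly stable in $A'$, and $c_d=\dim(A'/J')_d$. Partition the monomials of $(A/J)_d$ by their $x_n$-degree, writing
\[
h_d=\sum_{i\ge 0} g_i,\qquad g_i=\#\{m\in A_d\smallsetminus J:\deg_{x_n}(m)=i\},
\]
so that $c_d=g_0$. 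The Borel condition on $J$ imposes Macaulay-type bounds on how rapidly $g_i$ can grow with $i$ (or, dually, how the $g_i$ split $h_d$), because raising $\deg_{x_n}$ by one corresponds to multiplying by $x_n$ modulo $J$.

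The combinatorial core is then to show that, subject to these Borel constraints and to $\sum_i g_i=h_d$, one has $g_0\le (h_d)_{(d)}|^{-1}_0$. I would prove this by induction on $n$: the case $n=1$ is trivial, and the inductive step identifies the slices $g_i$ with Hilbert function values of strongly stable ideals in $n-1$ variables (obtained from the colon ideals $(J:x_n^i)\cap A'$), then reassembles the Macaulay expansions using the Pascal identity $\binom{k}{d}=\binom{k-1}{d}+\binom{k-1}{d-1}$. Equivalently, and closer to Green's original argument, one shows that among all strongly stable ideals with a given $h_d$ the lex segment simultaneously maximizes $g_0$, and for the lex segment the bound is achieved by direct computation using the Macaulay representation of $h_d$.

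The main obstacle, as usual in this circle of ideas, is this last combinatorial step: turning the qualitative statement \emph{``Borel-fixed ideals have at most lex-segment restriction''} into the sharp quantitative inequality with the correct Macaulay-shifted form $(h_d)_{(d)}|^{-1}_0$. The two subtleties are (i) proving the extremality of the lex segment for the restriction functional $J\mapsto g_0$ (not just for Hilbert growth in the next degree, which is the content of \cref{thm:macaulay}), and (ii) carefully matching the indices so that the Pascal-identity telescoping yields precisely the shift $|^{-1}_0$ rather than some other shift in the Macaulay symbol.
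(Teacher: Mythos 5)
The paper does not prove this statement: it is Green's Hyperplane Restriction Theorem, imported verbatim from \cite[Theorem 1]{green1989} and used as a black-box tool (together with Macaulay's and Gotzmann's theorems) throughout the rest of the paper. So there is no ``paper's own proof'' to compare your sketch against.

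As for the sketch itself, the overall strategy you outline is the standard modern route to Green's theorem, and the first reduction is essentially correct: one replaces $I$ by $\gin(I)$ computed in \emph{reverse lexicographic} order (this choice of term order matters; the paper elsewhere works with lex, which would not serve here), invokes the Bayer--Stillman/Galligo-type result that $x_n$ is a generic-enough linear form for a revlex Borel-fixed ideal, and thereby reduces to a counting problem about a strongly stable monomial ideal $J$ sliced by $x_n$-degree, with $c_d = g_0$ in your notation. The identification of $J'=J\cap A'$ as strongly stable in $A'$ is also correct.

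The genuine gap is exactly where you flag it: the combinatorial inequality $g_0 \le (h_d)_{(d)}|^{-1}_0$ carries the entire content of the theorem, and your proposal does not actually establish it. ``Induction on $n$, match the Macaulay expansions via Pascal's identity, and show the lex segment is extremal for the restriction functional'' is a plausible program, but each of those clauses hides a nontrivial lemma. In particular, the extremality of the lex segment for $J\mapsto g_0$ among Borel-fixed ideals with prescribed $h_d$ is not a formal consequence of Macaulay's theorem (\cref{thm:macaulay}) or of the Borel condition alone; it is a separate compression-type argument. Likewise, the bookkeeping that makes the telescoping of binomial coefficients come out to precisely the shift $|^{-1}_0$ (and not, say, $|^{-1}_{-1}$ or $|^{0}_{0}$) has to be done explicitly. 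Green's own proof in \cite{green1989} replaces this with a different combinatorial device (``crystallization''), and textbook treatments that do go through gin still devote several pages to the counting step. So what you have is a correct road map with the hard middle segment unpaved; it cannot yet be called a proof.
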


This can either be seen as a lower bound for $\dim \id{I,l}_d$ or equivalently as an upper bound for $\dim (I:l)_{d-1}$ which tells us how many elements in $I$ are divisible by $l$.

Notation-wise this means that if $h_d=\bin{k(d)}{d}+\bin{k(d-1)}{d-1}+\dots+\bin{k(1)}{1}$, then 
\[
c_d\le \bin{k(d)-1}{d}+\bin{k(d-1)-1}{d-1}+\dots+\bin{k(1)-1}{1}.\]

\begin{example}
Let $U\subset A_d$ be a subspace of codimension 1 and let $l\in A_1$ be a generic linear form. This means $h_d=A_d/U=1=\bin{d}{d}$. Therefore, \cref{thm:green_thm} shows
\[
c_d\le \bin{d-1}{d}=0.
\]
On the one hand, this means $\id{U,l}_d=A_d$, and on the other hand 
\[
\codim (U:l)_{d-1}=\codim U-\dim \id{I,l}_d=1.
\]
I.e. the subspace $(U:l)_{d-1}$ also has codimension 1.
\end{example}

\section{A first upper bound for base-point-free subspaces}
\label{sec:A first upper bound for base-point-free subspaces}

One way to get a good upper bound for \bpf\ subspaces of small dimension is the following.

\begin{prop}
\label{prop:small_subspaces}
Let $U\subset A_d$ be a \bpf\ subspace of dimension $r$. Then 
\[
\dim U^2 \ge n r-\bin{n}{2}.
\]
\end{prop}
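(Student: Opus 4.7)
The plan is to exploit base-point-freeness to find a regular sequence $f_1,\dots,f_n\in U$ and then bound $\dim U^2$ from below via the image of the multiplication map $(g_1,\dots,g_n)\mapsto\sum f_i g_i$, using the Koszul complex to control its kernel.

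First I would argue that, since $U$ is \bpf, a generic $n$-tuple $f_1,\dots,f_n\in U$ forms a regular sequence in $A$. The argument is inductive: assume $f_1,\dots,f_i$ have already been chosen so that they form a regular sequence, where $i<n$. The associated primes of $A/(f_1,\dots,f_i)$ are the minimal primes over that ideal, i.e.\ the homogeneous ideals of the (projective) irreducible components of $\vc(f_1,\dots,f_i)\subset\P^{n-1}$. Since $U$ has no \bp, for each such component $Z$ there exists some $f\in U$ not vanishing identically on $Z$; hence a generic $f_{i+1}\in U$ avoids all of these finitely many components and is a non-zero-divisor on $A/(f_1,\dots,f_i)$. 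Carrying this out for $i=0,1,\dots,n-1$ produces the required regular sequence, and in particular forces $r\ge n$ (so the bound is nontrivial in exactly the range where it matters).

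Next, consider the linear map
\[
\phi\colon U^n\longrightarrow U^2,\qquad (g_1,\dots,g_n)\longmapsto\sum_{i=1}^n f_i g_i,
\]
whose image lies in $U^2$. Its kernel consists of those syzygies of $(f_1,\dots,f_n)$ whose components all happen to lie in $U$. Because $(f_1,\dots,f_n)$ is a regular sequence, the Koszul complex on these forms is exact, so the first syzygy module is generated by the Koszul relations $f_j e_i-f_i e_j$ with $i<j$; a degree count (each such generator has total degree $2d$) forces the coefficients in any syzygy of internal degree $d$ to be scalars. Hence $\ker\phi$ is the $\binom{n}{2}$-dimensional space spanned by the Koszul relations, and crucially these Koszul syzygies all lie in $U^n$ because every $f_i\in U$. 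Applying rank-nullity,
\[
\dim U^2\ge\dim\phi(U^n)=\dim U^n-\dim\ker\phi=nr-\bin{n}{2}.
\]

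The main subtlety is the opening step: one must verify that the base-point-free hypothesis on $U$ really does produce a regular sequence \emph{inside $U$} (not merely inside $A_d$), since the argument relies on each Koszul syzygy having both entries in $U$. Everything afterwards is a standard Koszul/rank-nullity computation.
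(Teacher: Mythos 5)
Your proof is correct and follows the same approach as the paper: choose a length-$n$ regular sequence in $U$ via base-point-freeness, map $U^n \to U^2$ by multiplication against the sequence, use regularity to identify the kernel as the span of the $\binom{n}{2}$ Koszul relations (which lie in $U^n$ since each $f_i\in U$), and apply rank–nullity. The only difference is that you spell out the prime-avoidance argument for the existence of the regular sequence in $U$ and the degree bookkeeping for the syzygies, both of which the paper takes as known.
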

\begin{proof}
Since $U$ is \bpf\ it follows that $\dim U\ge n$ and $U$ contains a regular sequence $p_1,\dots,p_n$. Consider the map
\[
\prod_{i=1}^n U\to A_{2d},\quad (q_1,\dots,q_n)\mapsto \sum_{i=1}^n p_iq_i.
\]
Since $p_1,\dots,p_n$ is a regular sequence, the syzygies are the obvious ones, namely the kernel is spanned by the vectors $(0,\dots,0,p_j,0,\dots,0,-p_i,0,\dots,0)$ with $i<j$ and $p_j$ at position $i$ in the vector and $-p_i$ at position $j$. There are exactly $\bin{n}{2}$ of those vectors, hence the image $\spn(p_1,\dots,p_r)U$ has dimension $n\cdot\dim U -\bin{n}{2}$ and the image is contained in $U^2$.
\end{proof}

\begin{cor}
For any $n,d\ge 1$, $1\le k\le \dim A_d$ we have
\[
m^0(n,d,k)\le \bin{n-1+2d}{n-1}+\bin{n}{2}+nk-n\bin{n-1+d}{n-1}
\]
\end{cor}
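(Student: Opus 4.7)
The plan is to apply \cref{prop:small_subspaces} directly, translating its lower bound on $\dim U^2$ into an upper bound on $\codim U^2$ by subtracting from $\dim A_{2d}$ and expressing $\dim U$ in terms of the codimension $k$.

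Concretely, let $U\subset A_d$ be any \bpf\ subspace with $\codim U=k$, so that
\[
r := \dim U = \dim A_d - k = \bin{n-1+d}{n-1} - k.
\]
By \cref{prop:small_subspaces}, $\dim U^2 \ge nr - \bin{n}{2}$. Since $\dim A_{2d}=\bin{n-1+2d}{n-1}$, passing to codimensions gives
\[
\codim U^2 \;=\; \bin{n-1+2d}{n-1} - \dim U^2 \;\le\; \bin{n-1+2d}{n-1} + \bin{n}{2} + nk - n\bin{n-1+d}{n-1}.
\]
As this inequality holds for every \bpf\ subspace of codimension $k$, it also bounds the maximum $m^0(n,d,k)$.

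Since the actual content — the existence of $n$ elements of $U$ forming a regular sequence and the resulting lower bound on $\dim U^2$ — is already contained in \cref{prop:small_subspaces}, the derivation here is purely bookkeeping: a substitution of $r=\bin{n-1+d}{n-1}-k$ and a conversion between dimension and codimension. There is no genuine obstacle.
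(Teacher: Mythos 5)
Your derivation is correct and is exactly the immediate consequence of \cref{prop:small_subspaces} that the paper intends (the paper gives no proof for this corollary precisely because it is this one-line substitution $r=\dim A_d-k$ followed by passing to codimensions). Nothing further is needed.
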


\begin{example}
\label{ex:reg_seq_tight}
The easiest example where this bound is tight is $U=\spn(x_1^d,\dots,x_n^d)$. Since $U$ is spanned by a regular sequence the condition $\spn(p_1,\dots,p_n)U=U^2$ is certainly true.

In general, one should expect this bound to be good whenever the dimension of the subspace is close to $n$ and rather bad whenever $\codim U$ is small.

However, the bound can also be tight or almost tight even for larger subspaces. Indeed, in any of the following cases, there exists a \bpf\ subspace $U\subset A_d$ of dimension $r$ such that the bound in \cref{prop:small_subspaces} is tight.
\begin{enumerate}
\item $r=n$,
\item $d$ is even, $n\ge 3$ and $r=n+3$,
\item there exists $s\in\N$ such that $s\vert d$, and $r=n+s-1$.
\end{enumerate}
In the next two cases, there exists a \bpf\ subspace $U\subset A_d$ of dimension $r$ such that the bound is 1 off, i.e. $\dim U^2 = nr-\bin{n}{2} +1$.
\begin{enumerate}
\item[(iv)] $d$ is even, $n\ge 4$ and $r=n+6$,
\item[(v)] $3\vert d,\, n\ge 3$ and $r=n+7$.
\end{enumerate}
The proof is not hard but we do not include a proof here as this is not needed later. The main reason is that the second Veronese of $\P^2$ and the $s$-th Veronese of $\P^1$ ($s\in\N$) are both varieties of minimal degree and the second (resp. third) Veronese of $\P^3$ (resp. $\P^2$) is an arithmetically Cohen-Macaulay variety of almost minimal degree.

The third case is especially interesting since it shows that if the degree is large enough, there exist subspaces of any dimension in any number of variables such that the bound is tight.
\end{example}

The main downside of this bound is that for large subspaces, i.e. small codimension, this bound depends on $n$ which is not necessary as shown in \cref{thm:main_bound}.
From now on we consider only the case where the codimension of $U$ is small.

\section{Subspaces of codimension 1 and 2}
\label{sec:Subspaces of codimension 1 and 2}

We start by determining bounds for $\codim U^2$ in the cases $\codim U=1,2$.
We show that there is a uniform bound for $\codim U^2$ not depending on $n$ or $d$. This is also our main motivation for the next sections where we generalize this result to higher codimensions.
Furthermore, we show \cref{thm:reduction_number_of_variables} which is our main tool in the next sections to reduce the number of variables.

\begin{lem}
\label{lem:not_contained_in_Pn-3}
Let $U\subset A_d$ be a \bpf\ subspace and $W: = U^\perp$. If $\sz(W)$ is not contained in any linear variety of codimension 2, then there exists a \cc\ such that $\initial(U)_d$ is \bpf.

In the case $n=2$, this should be understood as $\vc(W)\neq\emptyset$, i.e. $\dim\vc(W)\in\{0,1\}$.
\end{lem}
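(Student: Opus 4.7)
The plan is to construct a \cc\ that places $n-1$ coordinate points into $\sz(W)$; via apolarity this will force the required pure powers $x_i^d$ to lie in $U$, and hence in $\initial(U)_d$.

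First I would record the criterion for a monomial subspace of $A_d$ to be \bpf: this is exactly the condition that $x_i^d\in\initial(U)_d$ for every $i=1,\dots,n$. Under the lex order with $x_1<\dots<x_n$, the monomials $\le x_i^d$ are exactly those in $A(i)_d$, with $x_i^d$ being the largest. Therefore $x_i^d\in\initial(V)_d$ is equivalent to $V\cap A(i)_d$ containing an element with nonzero $x_i^d$-coefficient; in particular, $x_i^d\in V$ already implies $x_i^d\in\initial(V)_d$.

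The second ingredient is the apolarity dictionary. For $f\in A_d$ the value $f(e_i)$ is, up to the factor $d!$, the pairing $\bil{f}{x_i^d}$; so $e_i\in\sz(W)$ is equivalent to $x_i^d\in W^\perp=U$. More generally, for a linear form $\ell$ with coefficient vector $a$, $\ell^d\in U$ iff $[a]\in\sz(W)$. This is the translation between ``coordinate points lying in $\sz(W)$'' and ``pure powers of coordinate variables lying in $U$''.

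Finally I would use the hypothesis to construct the coordinate change. The assumption that $\sz(W)$ is not contained in any codimension $2$ linear subvariety of $\P^{n-1}$ means that $\sz(W)$ contains $n-1$ linearly independent points $\xi_1,\dots,\xi_{n-1}$ (for $n=2$ this is just $\sz(W)\neq\emptyset$). Choose any $\xi_n\in\P^{n-1}$ so that $\xi_1,\dots,\xi_n$ are linearly independent, and take $G\in\gl_n(\C)$ sending $\xi_i$ to $e_i$. After replacing $U$ by $G\cdot U$, the points $e_1,\dots,e_{n-1}$ lie in $\sz(W)$, so $x_i^d\in U\subset\initial(U)_d$ for $i=1,\dots,n-1$; the last monomial $x_n^d\in\initial(U)_d$ is automatic from $U$ being \bpf, since some $f\in U$ must have $f(e_n)\neq 0$. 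Thus $\initial(U)_d$ is \bpf. There is no real obstacle in the argument: the role of the hypothesis is exactly and only to guarantee enough linearly independent points in $\sz(W)$ to occupy $e_1,\dots,e_{n-1}$.
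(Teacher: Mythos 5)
Your proof is correct and follows essentially the same strategy as the paper's: use the hypothesis (via the apolarity dictionary) to find $n-1$ linearly independent points in $\sz(W)$, apply a coordinate change putting pure powers $x_1^d,\dots,x_{n-1}^d$ into $U$, and then obtain $x_n^d\in\initial(U)_d$ from base-point-freeness because $x_n^d$ is the lex-maximal monomial of degree $d$. The extra detail you give on the apolarity pairing and on which monomials lie below $x_i^d$ is sound and matches the paper's implicit reasoning.
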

\begin{proof}
By assumption there exist linearly independent linear forms $l_1,\dots,l_{n-1}\in A_1$ such that $l_1^d,\dots,l_{n-1}^d\in U$. After a \cc, we can assume that $x_1^d,\dots,x_{n-1}^d\in U$. Since $x_1<x_2<\dots <x_n$, it holds that $x_n^d\ge x^\alpha$ for any $\alpha=(\alpha_1,\dots,\alpha_n)\in\Z^n_+=\{(a_1,\dots,a_n)\in\Z^n\colon a_i\ge 0,\, \all i=1,\dots,n\},\ |\alpha|:=\sum_{i=1}^n\alpha_i=d$. Since $U$ is \bpf, there exists a form in $U$ such that $x_n^d$ occurs in it. Hence, $\initial(U)_d$ contains $x_1^d,\dots,x_n^d$ which shows that the subspace $\initial(U)_d$ is \bpf. 
\end{proof}

With this lemma in place, we look at subspaces of codimension 1. Firstly, we consider the simple case where our subspace does have a \bp.

\begin{lem}
\label{lem:basepoint}
If $U\subset A_d$ is a subspace of codimension 1 and $U$ has a \bp, then $\codim U^2=n$. 
\end{lem}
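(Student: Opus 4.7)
The plan is to identify $U$ explicitly using the base point, then pin down $U^2$ by combining an order-of-vanishing containment with a matching explicit factorization.

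First, after a linear change of coordinates I may assume the base point is $\xi=(1{:}0{:}\cdots{:}0)$. Evaluation at $\xi$ is a non-trivial linear functional on $A_d$, so the subspace of degree-$d$ forms vanishing at $\xi$ has codimension $1$. Since $U$ is contained in it and has the same codimension, $U$ equals it; concretely, $U$ is the linear span of all degree-$d$ monomials except $x_1^d$.

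Second, I would write every monomial of $A_{2d}$ uniquely in the form $x_1^{2d-|\alpha|}x_2^{\alpha_2}\cdots x_n^{\alpha_n}$ with $|\alpha|:=\alpha_2+\cdots+\alpha_n\le 2d$; its order of vanishing at $\xi$ equals $|\alpha|$. Any product of two elements of $U$ vanishes to order at least $2$ at $\xi$, so $U^2$ is contained in the span $V$ of those monomials with $|\alpha|\ge 2$. The complementary monomials are exactly $x_1^{2d}$ together with $x_1^{2d-1}x_i$ for $i=2,\dots,n$, so $V$ has codimension $n$ in $A_{2d}$ and hence $\codim U^2\ge n$.

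Third, I would establish the reverse inclusion $V\subseteq U^2$, which forces $\codim U^2\le n$ and completes the proof. For each monomial $x_1^{2d-|\alpha|}x^\alpha$ with $|\alpha|\ge 2$ the task is to exhibit it as a product of two degree-$d$ monomials, neither of which equals $x_1^d$: split $\alpha=\alpha'+\alpha''$ componentwise with $|\alpha'|,|\alpha''|\ge 1$ and distribute the $x_1$-exponent so that both factors have degree exactly $d$. The only mildly technical point, and the main place where one must pay attention, is arranging the split to satisfy $|\alpha'|,|\alpha''|\le d$ together with $0\le\alpha'\le\alpha$ componentwise when $|\alpha|$ is close to $2d$; a short case split on whether $|\alpha|\le d$ or $|\alpha|>d$ makes this routine and gives $\codim U^2=n$.
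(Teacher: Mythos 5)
Your proof is correct and follows essentially the same route as the paper: move the base point to $(1{:}0{:}\cdots{:}0)$, identify $U$ as the span of all degree-$d$ monomials other than $x_1^d$, and observe that exactly the $n$ monomials $x_1^{2d-1}x_i$ ($1\le i\le n$) are missing from $U^2$. You spell out the reverse inclusion (that every other monomial of $A_{2d}$ lies in $U^2$) which the paper treats as immediate, but the argument is the same.
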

\begin{proof}
We can apply a \cc\ such that $U^\perp=\spn(x_1^d)$. Then $U$ is the subspace spanned by all monomials except $x_1^d$. Now we see that for every $1\le i\le n$ the monomial $x_1^{2d-1}x_i$ is not contained in $U^2$ and thus $\codim U^2=n$.
\end{proof}

\begin{prop}
\label{prop:codim1possibledim}
Let $d\ge 2$ and $U\subset A_d$ be a \bpf\ subspace of codimension 1. Then the following hold:
\begin{enumerate}
\item If $d\ge 3$, then $\codim U^2\le 1$,
\item if $d=2$, then $\codim U^2\le 2$.
\end{enumerate} 
\end{prop}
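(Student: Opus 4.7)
The plan is to reduce to the monomial case and apply the earlier bound \cref{lem:monomial_codim_1}. Write $U^\perp = \spn(f)$ for some nonzero $f \in A_d$. The key ingredient is \cref{lem:not_contained_in_Pn-3}, whose hypothesis I need to verify in the codimension-$1$ setting. For $n \ge 3$, the locus $\vc(U^\perp) = V(f) \subset \P^{n-1}$ is a hypersurface of pure dimension $n-2$, which cannot lie inside a linear subvariety of codimension $2$ (whose dimension is only $n-3$). For $n=2$, $V(f)$ is a nonempty finite subset of $\P^1$, which is precisely the form the lemma requires in that case.

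Once the hypothesis is checked, \cref{lem:not_contained_in_Pn-3} supplies a change of coordinates after which $V:=\initial(U)_d$ is a \bpf, monomial subspace of $A_d$ of codimension $1$. I absorb this coordinate change into $U$---legitimate because both being \bpf\ and the number $\codim U^2$ are invariant under linear changes of coordinates---so I may assume $V=\initial(U)_d$ has these properties for $U$ itself. Then \cref{lem:powers_and_initial} applied to $\id{U}$ gives $V^2 \subset \initial(U^2)_{2d}$, and taking codimensions in $A_{2d}$ yields
\[
\codim U^2 \;=\; \codim \initial(U^2)_{2d} \;\le\; \codim V^2.
\]

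The last step is to invoke \cref{lem:monomial_codim_1} for the \bpf\ monomial subspace $V$ of codimension $1$: that lemma gives $\codim V^2 \le 1$ when $d \ge 3$ and $\codim V^2 \le 2$ when $d=2$, matching the two cases in the statement.

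The only step with any real content is the verification of the hypothesis of \cref{lem:not_contained_in_Pn-3}. This is where the codimension-$1$ assumption does essential work: a single equation always cuts out a hypersurface, and a hypersurface is automatically too large to lie in any codimension-$2$ linear subvariety, so the passage to the initial subspace cannot introduce a \bp. Beyond this geometric observation the proof is just a chain of \cref{lem:not_contained_in_Pn-3}, \cref{lem:powers_and_initial}, and \cref{lem:monomial_codim_1}; the more subtle variants of this reduction for higher codimension are dealt with in the later sections.
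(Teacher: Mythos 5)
Your proof is correct and follows the paper's own argument essentially line for line: reduce to the monomial case via \cref{lem:not_contained_in_Pn-3} (justified because a hypersurface cannot lie in a codimension-$2$ linear variety), pass to the initial subspace using \cref{lem:powers_and_initial}, and then invoke \cref{lem:monomial_codim_1}. The only difference is that you spell out the dimension count verifying the hypothesis of \cref{lem:not_contained_in_Pn-3} a bit more explicitly than the paper does.
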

\begin{proof}
Write $W: = U^\perp=\spn(q)$ for some $q\in A_d$. No hypersurface is contained in a linear variety of codimension 2, hence by \cref{lem:not_contained_in_Pn-3} we can apply a \cc\ and assume that the subspace $\initial(U)_d$ is \bpf. Then $\dim U^2=\dim \initial (U^2)_{2d}\ge\dim\, (\initial(U)^2)_{2d}$ by \cref{lem:powers_and_initial}. It therefore suffices to check the claim for monomial subspaces which is done in \cref{lem:monomial_codim_1}.
\end{proof}

\begin{rem}
In the case $d=2$ it is even true that $\codim U^2\in\{0,2\}$, i.e. the case $\codim U^2=1$ does not occur. This can be shown by considering the orthogonal complement of $U$. It is spanned by a quadratic form $q\in A_2$ of rank at least 2. As quadratic forms are diagonalizable, one can reduce to a combinatorial situation. If the rank is equal to 2, the codimension of $U^2$ is 2, in any other case $U^2=A_{2d}$.
\end{rem}

Now we turn to the codimension 2 case. We find a bound for $\codim U^2$ by reducing either to monomial subspaces or subspaces of binary forms.

First, we show how to reduce the number of variables. The idea of the proof is the following: if $U\subset A[x_{n+1}]_d=\C[x_1,\dots,x_{n+1}]_d$ is a subspace of the form $U=x_{n+1}A[x_{n+1}]_{d-1}\oplus U'$ with $U'\subset A_d$, then $U^2=x_{n+1}^2A[x_{n+1}]_{2d-2}\oplus x_{n+1}A_{d-1}U'\oplus (U')^2$. This shows 
\[
\codim U^2=\codim (U')^2+\codim A_{d-1}U'.
\]
If $U$ does not have this nice form, we have to argue slightly more carefully using the same idea.

\begin{thm}
\label{thm:reduction_number_of_variables}
Let $U\subset A_d$ be a subspace of codimension $k$. If there exists $2\le m\le n\ (R: = A(m))$ such that $U': = U\cap R_d$ satisfies $\codim_{R_d} U'=k$, then
\[
\codim_{A_{2d}} U^2\le (n-m)\codim_{R_{2d-1}} U'R_{d-1}+\codim_{R_{2d}} (U')^2.
\]
\end{thm}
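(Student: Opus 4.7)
My plan has two stages: (1) verify the theorem as an equality for a ``nice'' form of $U$, following the hint; (2) reduce the general case to the nice one.

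\textbf{Stage 1: Nice case.} Let
\[
\hat U := U' \oplus \bigoplus_{\alpha \in \N^{n-m},\,|\alpha|\ge 1} y^\alpha R_{d-|\alpha|} \subset A_d,
\]
i.e.\ $\hat U$ consists of $U'$ together with every monomial in $A_d$ involving at least one of the extra variables $y_1, \ldots, y_{n-m}$. Direct expansion of $\hat U \cdot \hat U$, collecting by $y$-multidegree and using $R_s R_t = R_{s+t}$, gives
\[
\hat U^2 = (U')^2 \;\oplus\; \bigoplus_{|\alpha|=1} y^\alpha U' R_{d-1} \;\oplus\; \bigoplus_{|\alpha|\ge 2} y^\alpha R_{2d-|\alpha|}.
\]
Summing the codimensions of these three pieces inside $A_{2d} = \bigoplus_\alpha y^\alpha R_{2d-|\alpha|}$ (the last contributing $0$) yields the target as an equality:
\[
\codim_{A_{2d}} \hat U^2 = \codim_{R_{2d}}(U')^2 + (n-m)\codim_{R_{2d-1}} U'R_{d-1}.
\]

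\textbf{Stage 2: Reduction.} The hypothesis is equivalent to $U + R_d = A_d$, so $U$ decomposes as $U = U' \oplus T$ for a complement $T$ with $T \cap R_d = 0$. Picking a section $\sigma: C \to U$ of the projection $U \to A_d/R_d$, where $C := \bigoplus_{|\alpha|\ge 1} y^\alpha R_{d-|\alpha|}$ is the monomial complement of $R_d$ in $A_d$, expresses elements of $T$ as $c + g(c)$ for a linear correction $g: C \to R_d$. Consider the flat family
\[
U_t := U' \oplus \{c + tg(c) : c \in C\}, \quad t \in \A^1,
\]
interpolating $\hat U = U_0$ and $U = U_1$. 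The function $t \mapsto \dim U_t^2$ is lower semi-continuous, so its generic value is the maximum, and the flat-limit argument gives $\dim \hat U^2 \le \dim U_t^2$ for generic $t$. My plan is to show $\dim U^2 \ge \dim \hat U^2$, from which
\[
\codim_{A_{2d}} U^2 \le \codim_{A_{2d}} \hat U^2 = \text{(RHS of the theorem)}
\]
follows by Stage 1.

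\textbf{Main obstacle.} The critical point is the inequality $\dim U_1^2 \ge \dim U_0^2$, which does not follow from semi-continuity alone since $t = 1$ could in principle be a special point of the family. I expect this to be resolved by observing that $\hat U$ arises as $\initial(U)$ under a weighted monomial order giving the $y_i$ strictly higher weight than the $x_j$; then \cref{lem:powers_and_initial} yields $\dim U^2 \ge \dim \initial(U)^2 = \dim \hat U^2$ directly. A final technical step is to ensure the nice-case formula applied to $\initial(U)$ gives the target right-hand side rather than a weaker bound involving $\initial(U')$, which I plan to handle by a compatible generic change of coordinates within $R$ that makes $U'$ strongly stable so that $\initial(U') = U'$.
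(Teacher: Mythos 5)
Your Stage 1 computation is correct: for $\hat U = U' \oplus C$ (with $C$ the span of all monomials involving some $y_i$), the formula
\[
\codim_{A_{2d}} \hat U^2 = \codim_{R_{2d}}(U')^2 + (n-m)\codim_{R_{2d-1}} U'R_{d-1}
\]
holds exactly, matching \cref{rem:variable_reduction_tight}. The gap is in Stage~2, and it is a real one. If $\initial$ denotes the leading-monomial subspace with respect to any \emph{total} monomial order refining the $y$-weight, then $\initial(U) = \initial(U')\oplus C$, \emph{not} $U'\oplus C$: each element $u'\in U'$ gets replaced by its single leading monomial. You flag exactly this issue in your final sentence, but the proposed repair cannot work: a linear change of coordinates on $R$ transforms $U'$ into another non-monomial subspace; the strongly stable object it produces is $\gin(U') = \initial(\phi(U'))$, which is a different subspace from $\phi(U')$. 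No coordinate change makes an arbitrary $U'$ itself monomial. Moreover, the fallback bound one does get, namely $\codim U^2 \le \codim(\initial(U'))^2 + (n-m)\codim\initial(U')R_{d-1}$, is strictly weaker than the theorem, since $\id{\initial(U')}\subset\initial(\id{U'})$ gives $\codim(\initial(U'))^2 \ge \codim(U')^2$ and likewise for the middle term --- precisely the phenomenon recorded in the remark after \cref{lem:powers_and_initial}.

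The good news is that your degeneration idea is salvageable and in fact yields a clean alternative to the paper's argument. Use the \emph{coarse} $y$-weight $w$ ($w(y_i)=1$, $w(x_j)=0$), without refining it to a total monomial order. Because $U'\subset R_d$ is already $w$-homogeneous (all of weight~$0$) and $T$ surjects onto $C$, the weight-initial subspace is exactly $\initial_w(U) = U'\oplus C = \hat U$ --- there is no $\initial(U')$ loss. The analogue of \cref{lem:powers_and_initial} holds for $w$-initial forms by the same one-line argument: for $u,v\in U$ one has $\initial_w(u)\initial_w(v) = \initial_w(uv)$ since $A$ is a domain, hence $\initial_w(U)^2\subset\initial_w(U^2)$, so $\dim\hat U^2 \le \dim U^2$, which is the theorem by Stage~1. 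Equivalently, replace your family $U_t = U'\oplus\{c+tg(c)\}$ (which is not a $\C^*$-orbit, so nothing forces $t=1$ to be generic) by the $\C^*$-action $\sigma_s: y_i\mapsto s y_i$. Since $\sigma_s$ is a ring automorphism, $\dim(\sigma_s U)^2 = \dim U^2$ for every $s\neq 0$, and $\lim_{s\to\infty}\sigma_s U = \hat U$ in the Grassmannian; lower semi-continuity of $V\mapsto\dim V^2$ then gives $\dim\hat U^2\le\dim U^2$.

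For comparison, the paper sticks with a total (lex) order but never invokes $\initial(U)^2\subset\initial(U^2)$ globally. Instead it shows directly that $\initial(U^2)$ contains $\mm^2 A_{2d-2}$, $\bigoplus_i x_i\,\initial(U'R_{d-1})$, and $\initial((U')^2)$ --- note these are $\initial$ of products, not products of $\initial$'s, which is exactly the extra content that $\initial(U)^2$ misses. Both routes prove the theorem; yours, once corrected as above, is arguably the more conceptual (exact computation at a flat limit), while the paper's is more hands-on.
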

\begin{proof}
Let $\mm=\id{x_{m+1},\dots,x_n}\subset A_d$, then $\mm_d = \sum_{i=m+1}^n x_iA_{d-1}$. We write 
\[
U=U'\oplus V \oplus W
\]
with $U'\subset R_d,\ V\subset \mm_d$ and $W=\spn(p_i+q_i\colon i=1,\dots,s)$ where $p_i\in R_d$ and $0\neq q_i\in \mm_d$ for $i=1,\dots,s$. By assumption $\codim_{R_d} U'=k$ which means $U+R_d=A_d$ and thus
\begin{equation}
\label{eq:eq1}
V \oplus \spn(q_1,\dots,q_s)=\mm_d.
\end{equation}
Calculating $U^2$ we get
\[
U^2=(U')^2+(V+W)^2+U'(V+W).
\]
Since we are working with the lex-ordering (and $x_1<\dots<x_n$), any monomial of degree $d$ containing any $x_i,\ i\ge m+1$ is bigger than any monomial in $R_d$.

Firstly, fix any monomial $x^\alpha$ such that $\alpha\in\Z^{n}_+, |\alpha|=2d$ and $\sum_{j\ge m+1} \alpha_j\ge 2$, then there exist $\beta,\gamma\in\Z^{n}_+,|\beta|=|\gamma|=d$ and $x_i,x_j,\ i,j\ge m+1$ such that $x_i|x^\beta, x_j|x^\gamma$ and $x^\alpha=x^\beta x^\gamma$. Then we have $x^\beta + p_\beta,\, x^\gamma + p_\gamma\in V+W$ for some $p_\beta,p_\gamma\in R_d$. Hence
\[
x^\alpha=\initial((x^\beta + p_\beta)(x^\gamma + p_\gamma))\in\initial((V+ W)^2)_{2d}\subset\initial(U^2)_{2d}.
\]
Secondly, we have
\[
\initial(U'(V+W))\stackrel{\cref{eq:eq1}}{\supset}\initial(U'\mm_d)
=\initial\left(\bigoplus_{i=m+1}^n x_i (U'A_{d-1})\right)
\supset\initial\left(\bigoplus_{i=m+1}^n x_i (U'R_{d-1})\right).
\]
This shows that for every $i=m+1,\dots, n$ we have
\[
\mm^2A_{2d-2},\, \initial((U')^2)_{2d},\, \initial(x_iU'R_{d-1})_{2d}\subset\initial(U^2)_{2d}.
\]
Counting dimensions, we get 
\[
\codim_{A_{2d}}\initial(U^2)_{2d}\le (n-m)\codim_{R_{2d-1}} U'R_{d-1}+\codim_{R_{2d}} (U')^2.
\]
\end{proof}

\begin{rem}
\label{rem:variable_reduction_tight}
The bound is sharp whenever $U=(x_{m+1},\dots,x_n)A_{d-1}\oplus U'$ as can be seen from the comment above \cref{thm:reduction_number_of_variables}.
\end{rem}

\begin{cor}
\label{cor:reduction_number_of_variables}
If the subspaces $U,U'$ in \cref{thm:reduction_number_of_variables} are \bpf\ and $k\le d-1$, then
\[
\codim U^2\le \codim (U')^2.
\]
\end{cor}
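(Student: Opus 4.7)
The plan is to apply \cref{thm:reduction_number_of_variables} directly and argue that the first term on the right-hand side vanishes under the stronger codimension assumption. Explicitly, the bound from the theorem reads
\[
\codim U^2\le (n-m)\codim_{R_{2d-1}} U'R_{d-1}+\codim_{R_{2d}} (U')^2,
\]
so it suffices to prove $\codim_{R_{2d-1}} U'R_{d-1} = 0$, i.e.\ $U'R_{d-1} = R_{2d-1}$.

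This is exactly where the hypothesis $k\le d-1$ enters. By assumption $U'\subset R_d$ is \bpf\ of codimension $k$, so I would simply apply \cref{cor:degree2d-1} inside the polynomial ring $R = A(m) = \C[x_1,\dots,x_m]$ (rather than $A$): since $k < d$, the corollary gives $h_{\id{U'}}(2d-1) = 0$. Because $U'$ is generated in degree $d$ and $2d-1 \ge d$, we have $\id{U'}_{2d-1} = U'R_{d-1}$, and hence $U'R_{d-1} = R_{2d-1}$, which is the desired vanishing.

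Combining this with \cref{thm:reduction_number_of_variables} immediately yields $\codim U^2 \le \codim (U')^2$. There is no genuine obstacle here: the whole point is that \cref{thm:reduction_number_of_variables} was engineered so that the $(n-m)$-weighted term measures multiplication into degree $2d-1$, while \cref{cor:degree2d-1} already guarantees surjectivity in that degree for \bpf\ subspaces of codimension strictly less than $d$. The only things to verify are that \cref{cor:degree2d-1} applies in the smaller ring $R$ (which it does, since its proof only uses Macaulay/Gotzmann growth and the \bp-freeness of $U'$ as a subspace of $R_d$), and that the inequality $k\le d-1$ is exactly what is needed to make the second clause of that corollary available.
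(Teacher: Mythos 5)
Your proof is correct and matches the paper's argument exactly: the paper also invokes \cref{cor:degree2d-1} (applied to $U'\subset R_d$ with $k<d$) to conclude that $(R/\id{U'})_{2d-1}=0$, making the $(n-m)$-weighted term in \cref{thm:reduction_number_of_variables} vanish. Your additional remark that \cref{cor:degree2d-1} applies verbatim in the smaller ring $R$ is a reasonable sanity check that the paper leaves implicit.
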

\begin{proof}
By \cref{cor:degree2d-1} the degree $2d-1$ component of $R/\id{U'}$ has dimension 0. Therefore, the result follows from \cref{thm:reduction_number_of_variables}. 
\end{proof}

\begin{thm}
\label{thm:codimension2bounds}
Let $U\subset A_d$ be a \bpf\ subspace of codimension 2. Then the following hold:
\begin{enumerate}
\item If $d=2$, then $\codim U^2\le 6$,
\item if $d\ge 3$, then $\codim U^2\le 4$.
\end{enumerate}
For $d\le 4$ the bounds are tight.
\end{thm}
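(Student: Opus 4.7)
The plan is to reduce $U$ either to a \bpf\ monomial subspace, where Lemma \ref{lem:monomial_subspaces_of_codimension_2} gives the bound, or to the binary case $n = 2$. In the binary case, any two linearly independent elements of a \bpf\ codim-$2$ subspace of $\C[x_1,x_2]_d$ are coprime (their common GCD divides the whole subspace, whose vanishing locus is empty, so the GCD has no projective zero and must be constant) and hence span all of $\C[x_1,x_2]_{2d}$ by the standard Bezout/Koszul argument, yielding $\codim U^2 = 0$. Note that for $d = 2$ no such $U$ exists since $\dim U = d - 1 = 1$, so the binary case only contributes when $d \ge 3$.

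For $n \ge 3$ I would analyze $\vc(W)$ where $W = U^\perp$. If $\vc(W)$ is not contained in any codimension-$2$ linear subvariety of $\P^{n-1}$, Lemma \ref{lem:not_contained_in_Pn-3} provides a \cc\ in which $\initial(U)_d$ is a \bpf\ monomial subspace of codim $2$. Then Lemma \ref{lem:powers_and_initial} yields $\codim U^2 \le \codim \initial(U)^2$, and Lemma \ref{lem:monomial_subspaces_of_codimension_2} produces the claimed bounds ($\le 6$ for $d = 2$, $\le 4$ for $d \in \{3,4\}$, and even $\le 2$ for $d \ge 5$). Tightness for $d \le 4$ is inherited from the explicit monomial examples in that lemma.

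In the remaining case $\vc(W) \subset L$ for some codimension-$2$ linear subvariety $L$, I would fix coordinates so that $L = \{x_1 = x_2 = 0\}$, in which case the pencil $W$ must lie in the ideal $(x_1, x_2)_d$ of $L$. Choose an appropriate coordinate subalgebra $R$ transverse to $L$ so that $U' := U \cap R_d$ has codimension $2$ in $R_d$, and apply Theorem \ref{thm:reduction_number_of_variables} to obtain
\[
\codim U^2 \le (n - m)\,h_{\id{U'}}(2d-1) + \codim_{R_{2d}}(U')^2.
\]
For $d \ge 3$ one uses Corollary \ref{cor:degree2d-1} to bound $h_{\id{U'}}(2d-1) \le 1$ (it is $0$ when $k < d$), and then Corollary \ref{cor:reduction_number_of_variables} together with the binary base case recursively bounds $\codim (U')^2$. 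For $d = 2$, direct Macaulay estimates on both terms combined with a short inspection of the possible shapes of $W$ inside $(x_1, x_2)_2$ yield the sharp bound $6$.

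The principal obstacle is producing the subalgebra $R$ in the second subcase: the naive choice $R = \C[x_3, \dots, x_n]$ is useless because $W \subset (x_1,x_2)_d$ forces $W$ to lie entirely in $R_d^\perp$, so $U' = R_d$ and the hypothesis of Theorem \ref{thm:reduction_number_of_variables} fails. One must instead take $R$ intersecting the "bad" variables $x_1, x_2$ and verify, using the $2$-dimensional freedom in $W$, that $W \cap (\mm^R)_d = 0$; moreover, since $U'$ can fail to be \bpf\ on $\P^{m-1}$ (this in fact always happens when $d = 2$), both $h_{\id{U'}}(2d-1)$ and $\codim_{R_{2d}} (U')^2$ must be controlled uniformly in $n$ by direct means rather than by the clean Corollary \ref{cor:reduction_number_of_variables} route.
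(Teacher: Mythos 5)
Your overall architecture matches the paper's for the easy half: if $\vc(W)$ is not contained in any codimension-$2$ linear subvariety, Lemma~\ref{lem:not_contained_in_Pn-3} and Lemma~\ref{lem:monomial_subspaces_of_codimension_2} reduce to the monomial computation. But the two other pieces of your proposal contain a genuine mathematical error and a genuine gap, respectively.

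\textbf{The binary claim is false.} You assert that a \bpf\ codimension-$2$ subspace $U' \subset \C[x_1,x_2]_d$ satisfies $\codim (U')^2 = 0$ because two coprime generators ``span all of $\C[x_1,x_2]_{2d}$ by Bezout/Koszul.'' This conflates the ideal $(p,q)_{2d}$ (which indeed equals $A(2)_{2d}$ when $p,q$ are coprime of degree $d$) with $U'^2$, which is generated only by products of pairs of elements of $U'$. These are different: for $U' = \spn(x^3,y^3) \subset \C[x,y]_3$ (codimension $2$, \bpf), one has $U'^2 = \spn(x^6, x^3y^3, y^6)$ of dimension $3$ inside the $7$-dimensional $\C[x,y]_6$, so $\codim (U')^2 = 4$, not $0$. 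The correct bound $\codim(U')^2 \le 4$ follows instead from Proposition~\ref{prop:small_subspaces} applied with $n=2$, and $4$ is sharp. It happens that $4$ is exactly what Theorem~\ref{thm:codimension2bounds}(ii) needs, so the conclusion for $d\geq 3$ would still survive, but the argument as you stated it is wrong and you would need to replace it.

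\textbf{The hard case is not resolved.} You correctly observe that the naive $R=\C[x_3,\dots,x_n]$ fails and that some other $R$ transverse to $L$ must be found, but you explicitly leave this as an unresolved obstacle, and you also flag that $U' = U\cap R_d$ may fail to be \bpf\ (which blocks Corollary~\ref{cor:reduction_number_of_variables}). The paper does not push the reduction theorem through this bottleneck. Instead it splits the hard case $\vc(W)=\vc(x_1,x_2)$ into two subcases according to whether $U$ decomposes as $\spn(x_3,\dots,x_n)A_{d-1}\oplus U'$ (your tractable situation, case~(a), where $U'$ is automatically \bpf\ in $\C[x_1,x_2]_d$ and one applies Corollary~\ref{cor:reduction_number_of_variables} plus the bound $\codim(U')^2\le 4$; this case cannot occur for $d=2$), versus case~(b), which it handles by an entirely different tool: after an auxiliary change of coordinates producing $aMg + x_2^d \in \phi(U)$, it chooses a block monomial ordering (Remark~\ref{rem:block_ordering}) designed so that $\initial(\phi(U))_d$ contains $x_1^d,\dots,x_n^d$ and is therefore \bpf, then invokes Lemma~\ref{lem:powers_and_initial} and Lemma~\ref{lem:monomial_subspaces_of_codimension_2}. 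This monomial-ordering construction is the key idea missing from your proposal, and it is also what disposes of $d=2$; your appeal to ``direct Macaulay estimates and a short inspection'' for that case is not a proof.
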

\begin{proof}
Let $W=U^\perp$. If $\sz(W)\neq\sz(l,l')$ for any two linear forms $l,l'\in A_1$, the claim follows from \cref{lem:not_contained_in_Pn-3} and \cref{lem:monomial_subspaces_of_codimension_2} with the same arguments as in the codimension 1 case (\cref{prop:codim1possibledim}) as we can reduce to \bpf\ monomial subspaces.

Otherwise we can assume after a \cc\ that $\sz(W)=\sz(x_1,x_2)$ and thus $x_3^d,\dots,x_n^d\in U,\ x_1^d,x_2^d\notin U$. 
Hence, we can write
\[
U=\spn(x^\alpha+\nu_\alpha x_1^d+\lambda_\alpha x_2^d\colon \alpha\in\Z^n_+,\, |\alpha|=d,\, \exists\, i\ge 3\colon x_i | x^\alpha)\oplus U'
\]
where $U'\subset\C[x_1,x_2]_d$ is a subspace of codimension 2. We distinguish two cases. Either (a) for all $\alpha$ we have $\nu_\alpha=\lambda_\alpha=0$, or (b) there exists $\alpha$ such that $(\nu_\alpha,\lambda_\alpha)\neq (0,0)$.

(a): Here $U$ has the form
\[
U=\spn(x_3,\dots,x_n)A_{d-1}\oplus U'.
\]
If $d=2$, this case cannot occur since $\dim U'=1$ and thus $U$ has a \bp. Hence, we can assume that $d \ge 3$. Since $U$ is \bpf\ it follows that $U'$ is \bpf\ as a subspace of $\C[x_1,x_2]_d$. 
Then $\codim U^2 \le \codim (U')^2\le 4$ by \cref{cor:reduction_number_of_variables}.

(b): Fix $\alpha\in\Z^n_+,\, |\alpha|=d$ such that $(\nu_\alpha,\lambda_\alpha)\neq (0,0)$. Consider the subspace 
\[
V: = U'\oplus \spn(\nu_\alpha x_1^d+\lambda_\alpha x_2^d)\subset\C[x_1,x_2]_d.
\]
This subspace has codimension 1 and thus $V^\perp=\spn(h)$ for some $h\in\C[x_1,x_2]_d$. Especially, there exists $l\in\C[x_1,x_2]_1$ such that $l^d\in V$, namely the one evaluating $h$ in one of its zeroes. 
Therefore, there exist $a\in\C$ and $\beta\in\Z_+^n,\, \vert\beta\vert=d,\, x^\beta\notin\C[x_1,x_2]$ such that $ax^\beta+l^d\in U$. Write $x^\beta=x_1^{\beta_1}x_2^{\beta_2}M$ with $M\in\C[x_3,\dots,x_n]$.
Let $\phi$ be a \cc\ on $\C[x_1,x_2]$ that maps $l$ to $x_2$. Then $aMg+x_2^d\in \phi(U)$ with $g\in\C[x_1,x_2]$ the image of $x_1^{\alpha_1}x_2^{\alpha_2}$ under $\phi$. Now take any monomial ordering such that $x_1>x_2>\dots >x_n$ and such that $x_2^d$ is greater than any monomial in $Mg$, for example, the ordering given in \cref{rem:block_ordering}.
Wrt this ordering $\initial(\phi(U))_d$ contains $x_1^d,\dots,x_n^d$ and is therefore \bpf: the monomials $x_3^d,\dots,x_n^d$ are contained in $U$ and therefore in $\phi(U)$ by assumption. The monomial $x_2^d$ is the initial monomial of $aMg+x_2^d$ and $x_1^d$ appears in some form in $\phi(U)$ since it is \bpf\ and by the choice of the monomial ordering $x_1^d$ is the initial monomial of that form. Now we finish as earlier, $\codim U^2=\codim \phi(U)^2\le \codim\, (\initial(\phi(U))_d)^2$ and using \cref{lem:monomial_subspaces_of_codimension_2} we get the bounds we wanted.

The bounds are tight for $d\le 4$ by \cref{lem:monomial_subspaces_of_codimension_2}.
\end{proof}

\begin{rem}
\label{rem:block_ordering}
We want to define a monomial ordering such that $x_1>x_2>\dots>x_n$ and such that $x_2^d$ is greater than any monomial of degree $d$ that is divisible by $x_i$ for any $i\in\{3,\dots,n\}$.

We consider a block ordering $\succeq_b$ on the sets $\{x_1,x_2\}$ and $\{x_3,\dots,x_n\}$ and on each set the graded-lexicographic-ordering (grlex).
Let $\alpha,\,\beta\in\Z_+^{n}$, then the grlex ordering is defined as
\[
x^\alpha \succeq_{grlex} x^\beta : |\alpha| > |\beta| \text{ or } |\alpha|=|\beta| \text{ and } x^\alpha \succeq_{lex} x^\beta
\]
where $\succeq_{lex}$ is the usual lex-ordering and the variables are ordered as $x_1>x_2>\dots>x_n$.
Then the block-ordering is defined as follows. Let $\alpha,\,\beta\in\Z_+^n$, then
\[
x^\alpha\succeq_b x^\beta :\iff x_1^{\alpha_1}x_2^{\alpha_2} \succ_{grlex} x_1^{\beta_1}x_2^{\beta_2} \text{ or } x_1^{\alpha_1}x_2^{\alpha_2}=x_1^{\beta_1}x_2^{\beta_2} \text{ and } \frac{x^\alpha}{x_1^{\alpha_1}x_2^{\alpha_2}}\succeq_{grlex} \frac{x^\beta}{x_1^{\beta_1}x_2^{\beta_2}}.
\]
Now let $x^\alpha\in A_d$ be any monomial such that $\alpha_i>0$ for some $i\in\{3,\dots,n\}$. Then $\alpha_1+\alpha_2<d$ and therefore $x_2^d\succ_b x^\alpha$.
\end{rem}

\begin{rem}
\label{rem:egh}
The idea to choose a monomial ordering such that the degree $d$ component of the initial ideal is \bpf\ is unlikely to work as easily for higher codimensions.

The lex-plus-powers conjecture (or EGH conjecture) due to Eisenbud, Green, and Harris \cite{egh1993} predicts that for any homogeneous ideal $I\subset A$ containing a regular sequence $p_1,\dots,p_n$ with $d_i: = \deg p_i$, there is also a monomial ideal containing $x_i^{a_i}$ for $i=1,\dots,n$ with the same Hilbert function as $I$.
The conjecture has only been proven in some special cases, see for example \cite{cm2008}.

This is certainly not exactly what we are after. On the one hand, we are only interested in the case where all $p_i$ are of the same degree and the ideal is generated in that degree. On the other hand, it is not enough that the monomial ideal has the same Hilbert function, since the Hilbert function of $I$ does not determine the Hilbert function of $I^2$.

This however shows that we should not expect a reduction to monomial ideals to easily work in more general cases.
\end{rem}

We have seen that in the codimension 1 case the codimension of $U^2$ can be bounded by $1$ if $d\ge 3$ (resp. $2$ if $d=2$) and in the case $\codim U=2$, the codimension can be bounded by $4$ if $d\ge 3$ (resp. $6$ if $d=2$).
We would like to generalize this to higher codimensions. It seems that the correct way to do this is to show that there is a bound for $\codim U^2$ that is not dependent on $n$ or $d$, as long as $d$ is large enough.

We have already seen how we can reduce the number of variables and therefore make our bounds independent of $n$ using \cref{thm:reduction_number_of_variables}.

In the next section, we show how to find bounds that are independent of the degree $d$.

\section{Reducing the degree}
\label{sec:Reducing the degree}

In this section, we show that for $d\ge k$ the function $d\mapsto m(n,d,k)$ is constant for every fixed $n,k$. By definition, this is equivalent to showing that for certain subspaces $U\subset A_d$ of codimension $k$ there exists a subspace $V\subset A_{d-1}$ of codimension $k$ such that $\codim U^2 = \codim V^2$ whenever $d > k$.

For the next proofs, let us recall that for any subspace $U\subset A_d$ we have the exact sequence in \ref{lab:setup_green}:
\[
0 \to A_{d-1}/(U:l) \to A_d/U \to A_d/\id{U,l}_d \to 0.
\]
Especially, if $\id{U,l}_d=A_d$ it follows that $\codim (U:l) = \codim U$.

\begin{lem}
\label{lem:green_small_k}
Let $U\subset A_d$ be a subspace of codimension $k$ and $k\le d$. Then for a generic linear form $l\in A_1$ we have $\id{U,l}_d=A_d$ and $\codim (U:l)=\codim U$.
\end{lem}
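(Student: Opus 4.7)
The plan is to apply Green's Hyperplane Restriction Theorem (\cref{thm:green_thm}) to the ideal $I := \id{U}$, whose Hilbert function at degree $d$ is $h_d = \dim A_d/U = k$. The key computation is the $d$-th Macaulay representation of $k$. Since $k \le d$, we can write
\[
k = \bin{d}{d} + \bin{d-1}{d-1} + \dots + \bin{d-k+1}{d-k+1},
\]
which is a strictly decreasing chain of binomial coefficients, hence genuinely the $d$-th Macaulay representation of $k$.

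Applying \cref{thm:green_thm} to a generic linear form $l \in A_1$, with $c_d = \dim (A/\id{U,l})_d$, we obtain
\[
c_d \;\le\; (k)_{(d)}|^{-1}_0 \;=\; \bin{d-1}{d} + \bin{d-2}{d-1} + \dots + \bin{d-k}{d-k+1}.
\]
Every term on the right has upper index strictly smaller than the lower index, so each binomial coefficient vanishes by our convention that $\bin{a}{b} = 0$ for $a < b$. Hence $c_d = 0$, which means exactly $\id{U,l}_d = A_d$.

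For the second assertion, I would read off the exact sequence from \ref{lab:setup_green} in degree $d$,
\[
0 \to A_{d-1}/(U:l) \stackrel{\cdot l}{\to} A_d/U \to A_d/\id{U,l}_d \to 0.
\]
The right-hand term is zero by what we just proved, so the middle map is an isomorphism and $\codim (U:l) = \dim A_{d-1}/(U:l) = \dim A_d/U = k = \codim U$.

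There is no genuine obstacle here: the only thing to check carefully is that the Macaulay representation of $k$ really is the one written above (i.e.\ that the indices $d, d-1, \dots, d-k+1$ are nonnegative and strictly decreasing), which is precisely where the hypothesis $k \le d$ enters, and that Green's theorem is being applied in a degree where the hypothesis of ``generic linear form'' is met -- this is immediate since the theorem provides a generic $l$ that works in every degree simultaneously for $I$.
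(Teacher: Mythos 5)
Your proof is correct and follows essentially the same approach as the paper: compute the $d$-th Macaulay representation of $k$ as $\sum_{i=0}^{k-1}\binom{d-i}{d-i}$, apply Green's Hyperplane Restriction Theorem to conclude $c_d=0$, and then read off $\codim(U:l)=\codim U$ from the exact sequence in \ref{lab:setup_green}. Nothing to change.
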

\begin{proof}
With the notation from \cref{lab:setup_green} and with $I=\id{U}$ we have
\[
h_I(d)=h_d=k=\sum_{i=0}^{k-1} \bin{d-i}{d-i}
\]
since $k\le d$.
Hence, by Green's Theorem 
\[
\dim A_d/\id{U,l}_d= c_d\le (h_d)_{(d)}\vert^{-1}_0 = \sum_{i=0}^{k-1} \bin{d-i-1}{d-i}=0
\]
which means $A_d/\id{U,l}_d=0$, and therefore the first claim follows.
The second one is immediate from the exact sequence above.
\end{proof}

\begin{thm}
\label{thm:deg_reduction}
Let $U\subset A_d$ be a subspace of codimension $k\le d$ and let $l\in A_1$ be a generic linear form. With $V: = (U:l)\subset A_{d-1}$ the following inequality holds
\[
\codim U^2 \le \codim UV.
\]
If furthermore $k\le d-1$, then
\[
\codim U^2 \le \codim V^2.
\]
\end{thm}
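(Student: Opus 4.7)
The fundamental input is the inclusion $l V \subseteq U$, which is immediate from the definition $V = (U:l)$. Multiplying by $U$ and by $V$ respectively yields
\[
l \cdot UV \subseteq U^2 \qquad \text{and} \qquad l \cdot V^2 = (lV)\,V \subseteq UV.
\]
Thus multiplication by $l$ descends to well-defined linear maps
\[
\phi : A_{2d-1}/UV \longrightarrow A_{2d}/U^2, \qquad \psi : A_{2d-2}/V^2 \longrightarrow A_{2d-1}/UV.
\]
Both statements reduce to showing $\phi$ is surjective (yielding $\codim U^2 \le \codim UV$), and under the stronger hypothesis $k \le d-1$ also that $\psi$ is surjective (yielding $\codim UV \le \codim V^2$, hence $\codim U^2 \le \codim V^2$ upon composing with the first inequality).

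\textit{Surjectivity of $\phi$.} Applying \cref{lem:green_small_k} to $U$ (which only requires $k \le d$), for a generic $l$ we have $U + l A_{d-1} = A_d$. Squaring this identity,
\[
A_{2d} = A_d \cdot A_d = (U + l A_{d-1})(U + l A_{d-1}) \subseteq U^2 + l A_{2d-1},
\]
so $\phi$ is surjective.

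\textit{Surjectivity of $\psi$.} Assume $k \le d-1$. By \cref{lem:green_small_k} applied to $U$, the subspace $V = (U:l)$ has codimension exactly $k$ in $A_{d-1}$, and since $k \le d-1$ I may apply \cref{lem:green_small_k} once more, now to $V \subset A_{d-1}$, to obtain (for a generic linear form) $V + l A_{d-2} = A_{d-1}$. Combining the two identities:
\[
A_{2d-1} = A_d \cdot A_{d-1} = (U + l A_{d-1})(V + l A_{d-2}) \subseteq UV + l A_{2d-2},
\]
so $\psi$ is surjective.

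\textit{Main obstacle.} The delicate point is that a single linear form $l$ must simultaneously witness both generic conditions: $\id{U, l}_d = A_d$ (so that $V = (U:l)$ has codimension $k$) and $\id{V, l}_{d-1} = A_{d-1}$. Since $V$ itself depends on $l$, I cannot invoke \cref{lem:green_small_k} twice with independent generic choices. Each condition carves out a Zariski open subset of $A_1 \cong \C^n$: the first is open and dense by \cref{lem:green_small_k}, and the second can be shown to be nonempty (hence open dense) by passing to the generic initial ideal of $\id{U}$ and verifying both conditions for a coordinate linear form in the resulting strongly stable setting, where the combinatorics are transparent. The intersection of two dense open subsets of the irreducible variety $A_1$ is itself dense open, producing the desired simultaneous generic $l$.
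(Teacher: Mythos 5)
Your two surjectivity computations are correct and are essentially the same argument the paper gives (phrased there via the ideal quotients $(U^2:l)$ and $(UV:l)$ rather than via the multiplication maps $\phi$ and $\psi$, but these are equivalent). You also correctly identify the delicate point — that $V=(U:l)$ depends on $l$, so Green's lemma cannot simply be invoked a second time with an independently generic linear form. However, your resolution of that point has a genuine gap.

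The issue is your claim that the second condition, $\id{V,l}_{d-1}=A_{d-1}$, "can be shown to be nonempty (hence open dense)." Nonempty does not imply open, and the condition as you have stated it is \emph{not} manifestly open in $l$: the subspace $V=(U:l)$ itself has upper semicontinuous dimension in $l$, and there is no obvious semicontinuity for $\dim\big((U:l)+lA_{d-2}\big)$. Likewise, your plan to "pass to the generic initial ideal and verify both conditions for a coordinate linear form" cannot be executed as written, because after a generic change of coordinates $(U:x_1)$ is still not a monomial subspace, and $\initial(U:x_1)$ is not in general equal to $(\gin(U):x_1)$, so the combinatorics are \emph{not} transparent for this condition. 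What is missing is the reduction the paper makes: since $\codim V=\codim U$ for generic $l$, one has $\id{V,l}_{d-1}=A_{d-1}$ iff $\codim(V:l)=\codim V$, and since $(V:l)=((U:l):l)=(U:l^2)$, this is equivalent to $\id{U,l^2}_d=A_d$. This reformulation removes $V$ entirely: it is a rank condition on a map built directly from $U$ and $l^2$, hence a Zariski-open condition in $l$, and its nonemptiness can now legitimately be checked after a generic coordinate change via $\dim\id{U,x_1^2}_d = \dim\initial(\id{U,x_1^2})_d \ge \dim\id{\initial(U),x_1^2}_d$, using that for $k\le d-1$ every degree-$d$ monomial outside $\gin(U)_d$ is divisible by $x_1^2$. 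Without the $(U:l^2)$ reformulation, your intersection-of-dense-opens argument does not get off the ground.
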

\begin{proof}
Since $l$ is generic and $k\le d$ it follows from \cref{lem:green_small_k} that $\codim V=\codim U$ and $\id{U,l}_d=A_d$. Furthermore, we have 
\[
A_{2d}=(\id{U,l}_d)^2\subset \id{U^2,l}_{2d},
\] 
hence $\codim (U^2:l)=\codim U^2$ by the exact sequence in \ref{lab:setup_green}. Since $UV\subset (U^2:l)$ we have
\[
\codim U^2=\codim\, (U^2:l)\le \codim UV.
\]
Now we do the same for $UV$ if $k\le d-1$. If we show that $\id{V,l}_{d-1}=A_{d-1}$, then
\[
A_{2d-1}=\id{U,l}_d\id{V,l}_{d-1}\subset \id{UV,l}_{2d-1}.
\]
Thus $\codim (UV:l)=\codim UV$ and $V^2\subset (UV:l)$ which means $\codim UV\le \codim V^2$.

It is left to show that $\id{V,l}_{d-1}=A_{d-1}$. This is equivalent to showing that ${\codim(V:l)}= \codim V$. Since $((U:l):l)=(U:l^2)$ this again is equivalent to showing that ${\codim (U:l^2)}=\codim V=\codim U$ or $\id{U,l^2}_d=A_d$. Since $l\in A_1$ is generic, we can also apply a generic \cc\ to $U$, hence assume that $\initial(U)=\gin(U)$ and $l=x_1$. Then
\[
\dim\,\id{U,x_1^2}_d=\dim\initial(\id{U,x_1^2})_d\ge \dim\, \id{\initial(U),x_1^2}_d.
\]
Here the first equality follows from the fact that any ideal and its initial ideal have the same Hilbert function, the second one is immediate since $\initial(\id{U,x_1^2})\supset\id{\initial(U),x_1^2}$.

It is therefore enough to show that $\id{\gin(U),x_1^2}_d=A_d$. Since $k\le d-1$ every monomial of degree $d$ not contained in $\gin(U)_d$ is divisible by $x_1^2$. But this means exactly that $\gin(U)_d+x_1^2A_{d-2}=A_d$.
\end{proof}

\begin{rem}
(i) The reason we pass to initial ideals in the second part of the proof is that we need to show $\id{UV,l}_{2d-1}=A_{2d-1}$.
As we have seen $\id{U,l}_d=A_d$ and if we take another generic linear form $l'$ we also have $\id{V,l'}_{d-1}=A_{d-1}$. However, since $V=(U:l)$ we do not know that $l$ behaves generically for $V$.

(ii) It is not true in general that $(U:l)$ is \bpf\ if $U$ is. Let $n=3$ and let $U=\spn(x^2y, x^2z, xy^2)^\perp\subset \C[x,y,z]_3$. Then $U$ contains $A_1\spn(yz,z^2)$, and thus for a generic linear form $l\in A_1$ we have $(U:l)=\spn(yz,z^2)\oplus\spn(p)$ for some $p\in A_2$. Hence, the space $(U:l)$ has a \bp, namely $\sz(z,p)$.

One can show however that $(U:l)$ is \bpf\ whenever the degree is large enough.
\end{rem}

Now we show the reverse inequality from \cref{thm:deg_reduction} in the case of strongly stable subspaces.

\begin{prop}
\label{prop:deg_reduction_equality_ss}
Let $U\subset A_d$ be a strongly stable subspace of codimension $k\le d-1$ and let $V: = (U:x_1)$. Then $\codim V^2\le\codim U^2$
\end{prop}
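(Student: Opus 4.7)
The plan is to compare the two monomial spaces directly, exploiting the very strong description of strongly stable subspaces of small codimension given by \cref{lem:ss_small_codim}.

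First I would set up notation. Since $U$ is monomial and $\codim U = k \le d-1 < d$, \cref{lem:ss_small_codim}(ii) tells us that every monomial $N \in U^\perp$ is divisible by $x_1^s$ with $s = d-k+1 \ge 2$. Write the $k$ forbidden monomials as $N_1,\dots,N_k$, all multiples of $x_1^s$. Since $V = (U:x_1)$, the monomials in $V^\perp$ are exactly $N_1/x_1,\dots,N_k/x_1$ (they are distinct and lie in $A_{d-1}$ because $s\ge 2$), so $\codim V = k$ and $V$ is again a monomial subspace.

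The main idea is to exhibit an injection
\[
\phi\colon \{\text{monomials in }A_{2d-2}\setminus V^2\}\longrightarrow \{\text{monomials in }A_{2d}\setminus U^2\},\qquad T\longmapsto x_1^2 T.
\]
Injectivity is immediate, so the conclusion $\codim V^2\le \codim U^2$ follows once I verify well-definedness, i.e.\ the contrapositive: if $T\in A_{2d-2}$ is a monomial with $x_1^2T\in U^2$, then $T\in V^2$. This well-definedness is the only real content of the proof.

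To verify it, write $x_1^2 T = N N'$ with $N,N'\in U$ monomials of degree $d$, and split into cases according to the $x_1$-adic valuations $v_1(N),\,v_1(N')$. In the symmetric case $v_1(N),v_1(N')\ge 1$, setting $M = N/x_1$ and $M' = N'/x_1$ yields $T = MM'$, with $x_1M = N\in U$ and $x_1M' = N'\in U$, so $M,M'\in V$ and $T\in V^2$. The remaining case (up to symmetry) is $v_1(N)\ge 2$ and $v_1(N') = 0$. Write $N = x_1^2 N''$ and pick any $j>1$ with $x_j\mid N'$ (possible since $\deg N' = d\ge 1$ and $x_1\nmid N'$). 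Now take $M = x_j N''$ and $M' = N'/x_j$, so that $T = MM'$. Strong stability applied to $N\in U$ with $x_1\mid N$ and $j>1$ yields $x_jN/x_1 = x_1x_jN''\in U$, i.e.\ $x_1M\in U$, so $M\in V$. For $M'$, note that $x_1M' = x_1N'/x_j$ has $x_1$-valuation equal to $1$, which is strictly less than $s\ge 2$; by \cref{lem:ss_small_codim}(ii) it cannot be one of the forbidden monomials, hence $x_1M'\in U$ and $M'\in V$. Thus $T\in V^2$, completing the argument.

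The main obstacle is exactly this asymmetric case: one has to produce a factorization of $T$ in $V\cdot V$ even when all of the available powers of $x_1$ from the given decomposition have been absorbed into a single factor. Strong stability provides the needed "upward" move to put $x_1$ back into one factor, and the bound $s\ge 2$ from $k\le d-1$ guarantees that the resulting partner lands outside the list of forbidden monomials. Without either of these two ingredients the redistribution would fail.
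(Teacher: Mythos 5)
Your proof is correct and is essentially the paper's own argument: you exhibit the same injection $T\mapsto x_1^2T$, split into the same two cases on the $x_1$-valuations of the factors, and in the asymmetric case you produce exactly the same redistributed factorization $(x_jS)(T/x_j)$. The only cosmetic difference is that you apply strong stability and the $x_1^s$-divisibility of $U^\perp$ directly to $U$ (rather than, as the paper does, implicitly passing to the strongly stable space $V$ and applying the analogous facts there), and your case split is disjoint so the ``$x_1\mid T$'' subcase the paper handles separately never arises.
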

\begin{proof}
Let $M\in A_{2d-2}\setminus V^2$. We show that if $x_1^2M\notin U^2$, then $x_1^2((V^2)^\perp)\subset (U^2)^\perp$, and the claimed inequality follows.

Assume $x_1^2M\in U^2$. Then either 
\begin{enumerate}
\item there exist monomials $S,T\in A_{d-1}$ such that $x_1^2M=(x_1S)(x_1T)$ and $x_1S,\,x_1T\in U$, or
\item there exist $S\in A_{d-2}$ and $T\in A_d$ such that $x_1^2=(x_1^2S)T$ and $x_1^2S,\,T\in U$.
\end{enumerate}
In both cases $M=ST$. In case (i) we have $S,\,T\in V$ since $V=(U:x_1)$ and hence $M=ST\in V^2$, a contradiction.

In case (ii) we see $x_1S\in V$. If $x_1\vert T$, then $\frac{T}{x_1}\in V$ and again we have $M=(x_1S)\frac{T}{x_1}\in V^2$. Thus we can assume that $x_1$ does not divide $T$. Since $k\le d-1$ every monomial of degree $d-1$ not contained in $V$ is divisible by $x_1$. Hence, for every $i\in\{2,\dots,n\}$ such that $x_i\vert T$, the monomial $\frac{T}{x_i}$ is contained in $V$.
Fix any such $i\in\{2,\dots,n\}$. Since $V$ is strongly stable and $x_1S\in V$, the monomial $x_iS$ is also contained in $V$. Combined this gives
\[
M=ST=(x_iS)\frac{T}{x_i}\in V^2,
\]
which is again a contradiction.
\end{proof}

Combining the two inequalities of \cref{thm:deg_reduction} and \cref{prop:deg_reduction_equality_ss}, we get the following result.

\begin{cor}
\label{cor:ss_bound_independent_of_d}
If $k\le d$ then 
\[
m(n,d,k) = m(n,k,k).
\]
\end{cor}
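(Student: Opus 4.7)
The plan is to prove Corollary~\ref{cor:ss_bound_independent_of_d} by downward induction on $d$, reducing from degree $d$ to degree $d-1$ one step at a time until reaching $d=k$. The base case $d=k$ is trivial, so the content lies in establishing $m(n,d,k)=m(n,d-1,k)$ whenever $d>k$ (equivalently $k\le d-1$), and here the two inequalities come from the two preceding results of the section.

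For the inequality $m(n,d,k)\le m(n,d-1,k)$, I would take an arbitrary subspace $U\subset A_d$ of codimension $k$ realizing $m(n,d,k)$. Since $k\le d-1$, \cref{thm:deg_reduction} applies: for a generic linear form $l\in A_1$, the quotient $V:=(U:l)\subset A_{d-1}$ has codimension $k$ (by \cref{lem:green_small_k}) and $\codim U^2\le\codim V^2$. As $V$ is a codimension-$k$ subspace of $A_{d-1}$, we obtain $\codim V^2\le m(n,d-1,k)$, and the desired inequality follows.

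The reverse inequality $m(n,d-1,k)\le m(n,d,k)$ requires producing a subspace of $A_d$ whose square has at least as large codimension as any subspace of $A_{d-1}$. By \cref{prop:ss_is_minimal} we may choose a strongly stable $V\subset A_{d-1}$ of codimension $k$ with $\codim V^2=m(n,d-1,k)$. Applying the construction from \cref{lem:ss_small_codim}(i) (with indices shifted down by one), set
\[
U\ :=\ x_1 V\,\oplus\,\C[x_2,\dots,x_n]_d\ \subset\ A_d.
\]
Then $U$ is strongly stable, and a direct dimension count shows $\codim U=\codim V=k$. Moreover $(U:x_1)=V$: inclusion $V\subset(U:x_1)$ is immediate, and conversely if $x_1 p\in U$ then writing $x_1 p=x_1 v+q$ with $v\in V$ and $q\in\C[x_2,\dots,x_n]_d$ forces $q=0$ since $x_1\nmid q$, so $p=v\in V$. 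Since $U$ is strongly stable of codimension $k\le d-1$, \cref{prop:deg_reduction_equality_ss} yields $\codim V^2\le\codim U^2\le m(n,d,k)$.

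The step from $d$ to $d-1$ is therefore established, and iterating it $d-k$ times gives $m(n,d,k)=m(n,k,k)$. The only mildly delicate point is checking that $(U:x_1)=V$ for the explicit strongly stable lift $U$, but this is a short computation using the direct-sum decomposition; everything else is a direct bookkeeping of the two inequalities already proved in this section.
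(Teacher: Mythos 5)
Your proof is correct and follows the paper's argument exactly: the same downward induction from $d$ to $d-1$, the same use of \cref{thm:deg_reduction} and \cref{lem:green_small_k} for the inequality $m(n,d,k)\le m(n,d-1,k)$, and the same construction $U=x_1V\oplus\C[x_2,\dots,x_n]_d$ combined with \cref{prop:deg_reduction_equality_ss} for the reverse inequality. The extra detail you give in verifying $(U:x_1)=V$ and $\codim U=k$ is a welcome but routine supplement to the paper's more terse statement.
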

\begin{proof}
If $d=k$ there is nothing to show, we can thus assume that $k\le d-1$.
Let $U\subset A_d$ be a subspace of codimension $k$ such that $\codim U^2=m(n,d,k)$ and $k<d$. 
By \cref{thm:deg_reduction} we have $\codim U^2\le \codim V^2$ with $V=(U:l)$ for a generic linear form $l\in A_1$. By definition $\codim V^2\le m(n,d-1,k)$, thus $m(n,d,k)\le m(n,d-1,k)$.
On the other hand, let $V\subset A_{d-1}$ be a strongly stable subspace of codimension $k$ such that $\codim V^2=m(n,d-1,k)$. 
Let $U: = x_1V\oplus \C[x_2,\dots,x_n]_d$, then $V=(U:x_1)$ and $U$ is strongly stable. By \cref{prop:deg_reduction_equality_ss} it follows that $m(n,d-1,k)=\codim V^2\le\codim U^2\le m(n,d,k)$. 
Combined this gives $m(n,d,k)=m(n,d-1,k)$ and we are done by induction.
\end{proof}

\begin{rem}
The proofs also show that if $V\subset A_d$ is a strongly stable subspace of codimension $k$ such that $\codim V^2=m(n,d,k)$ and $k\le d$, the subspace $U: = x_1V\oplus\C[x_2,\dots,x_n]_{d+1}$ satisfies $\codim U^2=m(n,d+1,k)$.
\end{rem}

\section{Lifting subspaces}
\label{sec:Lifting subspaces}

In \cref{thm:reduction_number_of_variables} we showed how to reduce the number of variables, now we also want to increase that number while preserving $\codim U^2$.

\begin{dfn}
Let $U\subset A_d$ be a subspace of codimension $k$. Define
\[
U^{(1)}: = x_{n+1}A(n+1)_{d-1}\oplus U\subset A(n+1)_d
\]
and for any $l\ge 2$
\[
U^{(l)}: = (U^{(l-1)})^{(1)}\subset A(n+l)_d
\]
($U^{(0)}: = U$). 
\end{dfn}

For any $l\ge 1$ the subspace $U^{(l)}$ also has codimension $k$ in $A(n+l)_d$. And in fact, we know the whole Hilbert function of $U^{(l)}$.

\begin{prop}
\label{prop:liftingfaces}
Let $U\subset A_d$ be a subspace of codimension $k$. Let $H=(h_i)_{i\ge 0}$ be the Hilbert function of $\id{U}$. Then for every $l \ge 0$ the following hold:
\begin{enumerate}
\item The Hilbert function $K=(k_i)_{i\ge 0}$ of the ideal generated by $U^{(l)}$ in $A(n+l)$ satisfies
\begin{itemize}
\item $k_i=\dim A(n+l)_i$ for $0 \le i \le d-1$ and
\item $k_i=h_i$ for $i\ge d$.
\end{itemize}
Furthermore, we have
\item $\codim_{A(n+l)_{2d}} (U^{(l)})^2 = \codim_{A_{2d}} U^2 + l\cdot h_{2d-1}$.
\end{enumerate}
\end{prop}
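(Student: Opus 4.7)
My plan is to prove both claims by induction on $l$, with the heart of the argument being the case $l=1$. The induction step for part (ii) will rely on part (i): once we know that the Hilbert function of $\langle U^{(l-1)}\rangle$ in $A(n+l-1)$ agrees with that of $\langle U\rangle$ in all degrees $\ge d$, in particular the value $h_{2d-1}$ is preserved, so each lifting step contributes exactly $h_{2d-1}$ to the codimension of the square.

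For part (i) with $l=1$, since $U^{(1)} \subset A(n+1)_d$ sits in degree $d$, the ideal $\langle U^{(1)}\rangle$ vanishes in degrees below $d$, giving $k_i = \dim A(n+1)_i$ there. For $i\ge d$ I will compute $A(n+1)_{i-d}\cdot U^{(1)}$ directly from the $x_{n+1}$-adic decomposition $A(n+1)_{i-d} = A_{i-d}\oplus x_{n+1}A(n+1)_{i-d-1}$ and the definition $U^{(1)} = x_{n+1}A(n+1)_{d-1}\oplus U$. The outcome should be
\[
A(n+1)_{i-d}\cdot U^{(1)} \;=\; x_{n+1}\,A(n+1)_{i-1}\;\oplus\; A_{i-d}\cdot U,
\]
a direct sum because $A_{i-d}\cdot U\subset A_i$ is the $x_{n+1}$-free part of $A(n+1)_i$, while the rest lies in $x_{n+1}A(n+1)_{i-1}$. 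Subtracting from $A(n+1)_i = A_i \oplus x_{n+1}A(n+1)_{i-1}$ yields $k_i = \dim A_i - \dim A_{i-d}U = h_i$.

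For part (ii) with $l=1$, I will square $U^{(1)} = x_{n+1}A(n+1)_{d-1}\oplus U$ and establish the direct sum decomposition
\[
(U^{(1)})^2 \;=\; x_{n+1}^2\, A(n+1)_{2d-2}\;\oplus\; x_{n+1}\, A_{d-1}\, U\;\oplus\; U^2.
\]
The first summand is the square of $x_{n+1}A(n+1)_{d-1}$, since $A(n+1)_{d-1}\cdot A(n+1)_{d-1}=A(n+1)_{2d-2}$ by standard-gradedness. The cross term $x_{n+1}A(n+1)_{d-1}\cdot U$ decomposes via the $x_{n+1}$-grading into pieces $x_{n+1}^j A_{d-j} U$ for $j=1,\dots,d$; everything with $j\ge 2$ is absorbed into the first summand, leaving only $x_{n+1}A_{d-1}U$. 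Directness then follows from the stratification $A(n+1)_{2d} = A_{2d}\oplus x_{n+1}A_{2d-1}\oplus x_{n+1}^2A(n+1)_{2d-2}$, and counting codimensions stratum by stratum produces $\codim (U^{(1)})^2 = \codim U^2 + \codim_{A_{2d-1}} A_{d-1}U = \codim U^2 + h_{2d-1}$.

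For general $l$, I apply part (i) to $U^{(l-1)}$ (valid by the inductive hypothesis) to confirm that the Hilbert function of $\langle U^{(l-1)}\rangle$ in $A(n+l-1)$ at degree $2d-1$ is still $h_{2d-1}$, and then apply the $l=1$ case of (ii) to $U^{(l-1)}$, yielding $\codim (U^{(l)})^2 = \codim (U^{(l-1)})^2 + h_{2d-1}$. Iterating gives the claimed formula. I do not expect any serious obstacle: the proof is essentially bookkeeping with the $x_{n+1}$-adic grading on $A(n+1)$, and the only point that requires care is verifying the directness of the decompositions above, which comes for free once everything is written out in terms of $x_{n+1}$-powers.
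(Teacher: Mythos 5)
Your proposal is correct and follows essentially the same line of argument as the paper: reduce to $l=1$ by induction, then compute $\langle U^{(1)}\rangle$ and $(U^{(1)})^2$ via the $x_{n+1}$-adic decomposition of $A(n+1)$, identifying the same three strata $x_{n+1}^2A(n+1)_{2d-2}\oplus x_{n+1}A_{d-1}U\oplus U^2$ and counting codimensions stratum by stratum. The only cosmetic difference is that the paper further expands $x_{n+1}^2A(n+1)_{2d-2}$ into a direct sum over powers of $x_{n+1}$, which you leave unexpanded.
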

\begin{proof}
It is enough to show this for $l=1$ since the rest follows by induction. Write $A'=A[y]$ with a new indeterminate $y$, then
\[
V: = U^{(1)}=yA'_{d-1}\oplus U\subset A'_d.
\]
For any $s\ge 0$, we have
\begin{align*}
VA_{s}'&=yA_{d-1}'A_{s}'+UA_{s}'=\left(\bigoplus_{i=1}^{d+s} y^iA_{d+s-i}\right)+A_{s}U+yA_{s-1}U+\dots+y^sU\\
&=\bigoplus_{i=1}^{d+s} y^iA_{d+s-i} \oplus UA_{s}
\end{align*}
which shows (i) since $A'_{d+s}=\bigoplus_{i=0}^{d+s} y^iA_{d+s-i}$.

For (ii) we calculate $V^2$ and with the same argument as above we get
\[
V^2=y^2A_{2d-2}'+yA'_{d-1}U+U^2=\bigoplus_{i=2}^{2d} y^iA_{2d-i} \oplus y(A_{d-1}U) \oplus U^2
\]
and 
\[
\codim_{A'_{2d}} V^2 = \codim_{A_{2d}} U^2 + h_{2d-1}.
\]
\end{proof}

This enables us to determine the Hilbert function of codimension $2$ subspaces of $A_2$ as an easy application.

For generic $U$ the Hilbert function of $\id{U}$ is as small as possible. In the codimension 2 case this means that the Hilbert function is $(1,n,2)$ generically. We show that this holds whenever $U$ is \bpf.

\begin{prop}
\label{prop:quadratic_hf}
Let $U\subset A_2$ be a \bpf\ subspace of codimension 2. Then the Hilbert function of $\id{U}$ is $(1,n,2)$.
\end{prop}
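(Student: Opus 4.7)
The Hilbert function of $\id{U}$ begins $(1, n, 2, \ldots)$ for trivial reasons: $\id{U}$ is generated in degree $2$ and has $\codim U = 2$. By \cref{cor:degree2d-1} applied with $d = k = 2$ we moreover have $h_3 \le 1$; and the moment we also know $h_3 = 0$, the equality $A_1 U = A_3$ propagates via $A_i = A_{i-3}\cdot A_1 U \subset A_{i-2} U \subset \id{U}_i$ to give $h_i = 0$ for every $i \ge 3$ (also immediate from Macaulay~\ref{thm:macaulay}(ii)). The entire problem therefore reduces to excluding the possibility $h_3 = 1$.

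Assume for contradiction that $h_3 = 1$, and pick a cubic $F \in A_3$ representing a nonzero class in $A_3/A_1 U$; equivalently, $F$ is orthogonal to $A_1 U$ under the apolarity pairing. The identity $\bil{F}{lu} = \bil{l(\partial) F}{u}$ (for $l \in A_1$, $u \in A_2$) translates this orthogonality into the statement that $\partial_i F \in W := U^\perp$ for every $i = 1, \dots, n$. Because $\dim W = 2$, the linear map $A_1 \to A_2$, $l \mapsto l(\partial) F$, has kernel of dimension at least $n - 2$; after a linear change of coordinates I may assume that kernel equals $\spn(x_3, \dots, x_n)$, so that $F \in \C[x_1, x_2]_3$ and $\spn(\partial_1 F, \partial_2 F) \subset W \cap \C[x_1, x_2]_2$.

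I now split on the dimension of $\spn(\partial_1 F, \partial_2 F)$. If it equals $2$, then $W = \spn(\partial_1 F, \partial_2 F)$ is a $2$-dimensional subspace of the $3$-dimensional space $\C[x_1, x_2]_2$; a standard pencil argument (equivalently, intersecting the line $\P(W) \subset \P^2$ with the Veronese conic of squares) shows that any such subspace contains a nonzero square $\ell^2$ with $\ell \in \C[x_1, x_2]_1$. If instead $\dim \spn(\partial_1 F, \partial_2 F) = 1$, then $F$ is annihilated by some nontrivial first-order constant-coefficient operator in $x_1, x_2$, which forces $F = c\ell^3$ for some $\ell \in \C[x_1, x_2]_1$, and hence $\ell^2 \in \spn(\partial_1 F, \partial_2 F) \subset W$ again. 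In either case $W$ contains $\ell^2$ with $\ell = a x_1 + b x_2$, and a direct apolarity calculation yields $\bil{u}{\ell^2} = 2\, u(\xi)$ for $\xi = (a : b : 0 : \cdots : 0) \in \P^{n-1}$; thus $\ell^2 \in U^\perp$ is equivalent to $\xi$ being a base point of $U$, contradicting the base-point-free hypothesis.

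The main conceptual step is the translation \emph{cubic $F$ outside $A_1 U$} $\leftrightarrow$ \emph{all partials $\partial_i F$ lie in $W$}, which is the apolar description of $(A_1 U)^\perp$; once this is in place, the very small dimension of $W$ forces $F$ to live in only two variables, and in two variables the resulting $W$ is rigid enough that it must contain the square of a linear form, which is exactly the obstruction to base-point-freeness.
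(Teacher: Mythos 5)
Your proof is correct, and it takes a genuinely different route from the paper's. The paper also reduces to excluding $h_3\ge1$, but does so indirectly: it invokes \cref{prop:liftingfaces}(ii) to lift $U$ to $U^{(l)}\subset A(n+l)_2$, which remains base-point-free of codimension $2$, and observes that $\codim(U^{(l)})^2=\codim U^2+l\cdot h_3$ would grow without bound in $l$, contradicting the uniform bound $\codim V^2\le 6$ of \cref{thm:codimension2bounds}. Your argument is local and direct: the apolarity adjunction turns ``$F\notin A_1U$'' into ``all partials of $F$ lie in $W=U^\perp$,'' the constraint $\dim W=2$ reduces $F$ to (essentially) a binary cubic, and a pencil-versus-Veronese-conic argument in two variables forces a square $\ell^2$ into $W$, which is precisely a base point of $U$. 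Your route is more self-contained --- it bypasses \cref{thm:codimension2bounds} and the lifting machinery entirely and exhibits the concrete geometric obstruction --- while the paper's is shorter given what has already been established and matches the style of argument used elsewhere. One small imprecision to fix: the kernel of $l\mapsto l(\partial)F$ need not have dimension exactly $n-2$; after the change of coordinates you should say only that it \emph{contains} $\spn(x_3,\dots,x_n)$, which is all you actually use, and your subsequent case-split on $\dim\spn(\partial_1F,\partial_2F)$ already covers the degenerate possibilities this allows.
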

\begin{proof}
By \cref{thm:macaulay} the Hilbert function is smaller or equal to $(1,n,2,2,\dots)$. So assume $h_{\id{U}}(3)>0$. Then by \cref{prop:liftingfaces}, the subspace $U^{(l)}\subset A(n+l)_2$ has codimension 2 and for $l \ge 7$ we have $\codim_{A(n+l)_4} (U^{(l)})^2\ge 7$ which is not possible by \cref{thm:codimension2bounds}.
\end{proof}

\begin{cor}
\label{cor:lift_codim_1_2}
Let $U\subset A_d$ be a \bpf\ subspace of codimension $k\in\{1,2\}$ and $\codim U^2=s$. Then for every $N\ge n$, there exists a \bpf\ subspace $V\subset A(N)_d$ of codimension $k$ such that $\codim V^2=s$.
\end{cor}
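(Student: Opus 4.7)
The plan is to take $V := U^{(N-n)}$, the iterated lift defined in \cref{sec:Lifting subspaces}, and to verify the three required properties: that $\codim V=k$ in $A(N)_d$, that $V$ is \bpf, and that $\codim V^2 = s$. By induction, $V$ is automatically of codimension $k$; this follows immediately from \cref{prop:liftingfaces}(i) applied with $l=N-n$, which gives the full Hilbert function of $\id{V}$ in terms of that of $\id{U}$.

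For the base-point-free property, I would unwind the iterative definition: $V$ contains $x_j\,A(N)_{d-1}$ for every $j=n+1,\dots,N$, in particular the pure powers $x_{n+1}^d,\dots,x_N^d$. Any hypothetical base point $\xi=(\xi_1:\dots:\xi_N)$ of $V$ must therefore satisfy $\xi_j=0$ for $j>n$, so $\xi=(\xi_1:\dots:\xi_n:0:\dots:0)$ with $(\xi_1,\dots,\xi_n)\neq 0$. But $U\subset V$ and $U$ is \bpf\ in $A(n)_d$, so some $f\in U$ satisfies $f(\xi_1,\dots,\xi_n)\neq 0$, contradicting $\xi$ being a base point of $V$.

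The key step is the computation of $\codim V^2$. By \cref{prop:liftingfaces}(ii),
\[
\codim V^2 = \codim U^2 + (N-n)\, h_{2d-1},
\]
where $h_{2d-1}$ is the value at $2d-1$ of the Hilbert function of $\id{U}$. Hence it suffices to show that $h_{2d-1}=0$ in all the cases at hand. If $k=1$ and $d\ge 2$, or $k=2$ and $d\ge 3$, we have $k\le d-1$, so \cref{cor:degree2d-1} gives $h_{2d-1}=0$ directly. The remaining case $k=2$, $d=2$ (where $k=d$ so \cref{cor:degree2d-1} only yields $h_3\le 1$) is handled by \cref{prop:quadratic_hf}, which pins down the Hilbert function as $(1,n,2)$ and in particular forces $h_3=0$. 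Combining these with the displayed formula gives $\codim V^2 = \codim U^2 = s$, as required.

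The mildly delicate point is simply recognising that the $k=2$, $d=2$ case is not covered by \cref{cor:degree2d-1} alone and must be dealt with via \cref{prop:quadratic_hf}; apart from this case split, the argument is a direct application of the lifting construction. I expect no genuine obstacle.
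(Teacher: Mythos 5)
Your proposal is correct and follows essentially the same route as the paper: set $V=U^{(N-n)}$, observe via \cref{cor:degree2d-1} (supplemented by \cref{prop:quadratic_hf} for the one boundary case $k=2$, $d=2$) that $h_{\id U}(2d-1)=0$, and then read off $\codim V^2=\codim U^2$ from \cref{prop:liftingfaces}(ii). You are in fact slightly more thorough than the paper, which leaves both the base-point-freeness of $U^{(l)}$ and the $(k,d)$ case split implicit.
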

\begin{proof}
By \cref{prop:quadratic_hf} and \cref{cor:degree2d-1} the degree $2d-1$ component of $A/\id{U}$ has dimension $0$. Hence 
\[
\codim_{A(n)_{2d}} U^2=\codim_{A(n+l)_{2d}} (U^{(l)})^2.
\]
by \cref{prop:liftingfaces} (ii).
\end{proof}

\section{Arbitrary codimension}
\label{sec:arbitrary codimension}

We show bounds for $m^0(n,d,k)$ that are independent of $n$ and $d$, if $d$ is large enough.
The most important step is to also consider the orthogonal complement alongside our starting space. This is made precise in \cref{lem:dual}.

The main idea is the following: if $U\subset A_d$ is a \bpf\ subspace of codimension $k$, we consider $U': = U\cap A(m)_d$ for some $2\le m \le n$. To use \cref{thm:reduction_number_of_variables} we need to make sure that $\codim U=\codim U'$ and to get bounds that are independent of $n$ we want $U'$ to be \bpf\ as well (and $k\le d-1$), we then use \cref{cor:reduction_number_of_variables} to conclude $\codim U^2\le \codim (U')^2$. To get a bound that is independent of $d$, we use the results of \cref{sec:Reducing the degree}.

We still always assume that $n,\,d\ge 2$, and $k\in\N$. 

\begin{rem}[The dual problem]
\label{rem:dual}
Let $U\subset A_d$ be a \bpf\ subspace of codimension $k$. Instead of asking if $U': =  U\cap A(m)_d$ satisfies 
\begin{enumerate}
\item $\codim_{A(m)_d} U'=k$ and
\item $\sz(U')=\emptyset$ with $\sz(U')\subset\P^{m-1}$,
\end{enumerate}
as in \cref{thm:reduction_number_of_variables} and \cref{cor:reduction_number_of_variables}, we can also look at the dual problem:

Let $W=U^\perp$. Does $W': = W(x_1,\dots,x_m,0,\dots,0)$ have the same dimension as $W$ and does $W'$ not contain the $d$-th power of a linear form.
\end{rem}

The next lemma is an easy statement from linear algebra.

\begin{lem}
\label{lem:dual}
Let $U\subset A_d$ be a subspace and $W: = U^\perp$. Let $l_1,\dots,l_s\in A_1$ be linearly independent linear forms and $V: = \C[l_1,\dots,l_s]_d\subset A_d$. Then 
\[
(U\cap V)^\perp\cong(W+V^\perp)/V^\perp,
\]
and $V^\perp=\spn(\lambda_1,\dots,\lambda_{n-s})A_{d-1}$ where $\spn(l_1,\dots,l_s)^\perp=\spn(\lambda_1,\dots,\lambda_{n-s})\subset A_1$.

Write $\ol W$ for $(W+V^\perp)/V^\perp$, then we especially have $\codim U\cap V=\dim \ol W$ and $U\cap V$ is \bpf\ if and only if $\ol W$ contains no $d$-th power of a linear form.
\end{lem}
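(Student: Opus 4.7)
The statement splits cleanly into an explicit description of $V^\perp$ and the asserted isomorphism, both essentially linear-algebraic.

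For the description of $V^\perp$, I would first make a linear change of coordinates so that $l_i = x_i$ for $1\le i\le s$ and $\lambda_j = x_{s+j}$ for $1\le j\le n-s$. In these coordinates $V = \C[x_1,\ldots,x_s]_d$ is spanned by the monomials supported on $\{x_1,\ldots,x_s\}$, while $\spn(\lambda_1,\ldots,\lambda_{n-s})A_{d-1} = \id{x_{s+1},\ldots,x_n}_d$ is spanned by the complementary set of monomials. Since the apolarity pairing pairs distinct monomials to zero, the two subspaces are mutually orthogonal; a dimension count then shows they are orthogonal complements of one another. As a byproduct we obtain the direct-sum decomposition $A_d = V\oplus V^\perp$.

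For the isomorphism I would invoke the standard identity $(U\cap V)^\perp = W + V^\perp$ inside $A_d$, valid for any non-degenerate pairing (since $(X^\perp + Y^\perp)^\perp = X\cap Y$). The decomposition above provides a canonical projection $\pi\colon A_d\twoheadrightarrow V$ along $V^\perp$, inducing an isomorphism $A_d/V^\perp\xrightarrow{\sim} V$ that carries $\ol W$ to $\pi(W)$. What needs verifying is then $\pi(W) = (U\cap V)^\perp$ with the perp now taken inside $V$ (where the restricted pairing is again non-degenerate, by the monomial argument of the previous paragraph). The inclusion ``$\subset$'' is immediate: writing $w = \pi(w) + w'$ with $w'\in V^\perp$, for any $u\in U\cap V\subset V$ one has $\langle u, w\rangle = \langle u, \pi(w)\rangle$, which vanishes because $w\in U^\perp$. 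Equality follows from the dimension count
\[
\dim \pi(W) = \dim W - \dim(W\cap V^\perp) = \dim V - \dim(U\cap V),
\]
using $W\cap V^\perp = (U+V)^\perp$ together with inclusion-exclusion for $\dim(U+V)$.

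The consequences are then routine. The codimension equality is by construction. For the base-point-freeness equivalence, the apolarity formula $\langle f,\ell^d\rangle = d!\, f(\ell)$ shows that $U\cap V\subset V$ has a base point (viewing $V$ as the degree $d$ part of $\C[l_1,\ldots,l_s]$) if and only if some $\ell^d$ with $\ell\in\spn(l_1,\ldots,l_s)$ lies in $(U\cap V)^\perp_V\cong\ol W$. For a general linear form $\ell = \ell_V + \ell'$ with $\ell_V\in\spn(l_i)$ and $\ell'\in\spn(\lambda_j)$, every term of the binomial expansion of $\ell^d$ other than $\ell_V^d$ is divisible by $\ell'$ and hence lies in $V^\perp$, so $\pi(\ell^d) = \ell_V^d$. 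Thus the $d$-th powers of linear forms of $A_1$ in $A_d/V^\perp\cong V$ are exactly the $\ell_V^d$, and the stated equivalence follows. The only real bookkeeping issue throughout is distinguishing the two ``perps'' (one in $A_d$, one in $V$), and the decomposition $A_d = V\oplus V^\perp$ makes this transparent.
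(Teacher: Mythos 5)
The paper states this lemma without proof, remarking only that it is ``an easy statement from linear algebra,'' so there is no in-paper argument to compare against; I evaluate your proof on its own terms. The overall structure is sound: the description of $V^\perp$ plus a dimension count, the projection $\pi$ along $V^\perp$, the identity $(U\cap V)^\perp=W+V^\perp$ inside $A_d$, and the binomial expansion of $\ell^d$ to identify $d$-th powers modulo $V^\perp$ are the right ingredients, and the ``especially'' claims follow as you say.

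The opening step, however, is flawed as stated. A general linear change of coordinates sending $l_i\mapsto x_i$ and $\lambda_j\mapsto x_{s+j}$ does \emph{not} preserve the apolarity pairing: the pairing transforms contragradiently, so after such a change the assertion that ``the apolarity pairing pairs distinct monomials to zero'' is false for the new monomials, and the orthogonality of $V$ and $\spn(\lambda_1,\dots,\lambda_{n-s})A_{d-1}$ is not established. One can rescue the normalization by requiring the change of coordinates to be orthogonal for the apolarity form; over $\C$ this is possible exactly when the form restricts nondegenerately to $\spn(l_1,\dots,l_s)$, which is also exactly what you need for the decomposition $A_d=V\oplus V^\perp$ (for $n=2$, $s=1$, $l_1=x_1+ix_2$ one has $\lambda_1=x_1+ix_2$ and $V\cap V^\perp\neq 0$). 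It is cleaner to drop the normalization and argue directly: since each $\lambda_j$ is apolar to every $l_i$, the operator $\lambda_j(\partial)$ annihilates $\C[l_1,\dots,l_s]$, hence any $\lambda_jg$ with $g\in A_{d-1}$ pairs to zero against $V$; the dimension count $\dim\spn(\lambda_j)A_{d-1}=\dim A_d-\dim\C[l_1,\dots,l_s]_d=\dim V^\perp$ then gives equality, with no assumption on the $l_i$. Having done this, you should still flag that the direct-sum decomposition, and with it your projection $\pi$ and the identification $(U\cap V)^\perp_V\cong\ol W$, tacitly assume nondegeneracy on $\spn(l_1,\dots,l_s)$ — harmless here, since the paper only invokes this lemma for generic linear forms, but worth stating.
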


Now we want to work on condition (i) in \cref{rem:dual} to ensure that $\dim W=\dim \ol W$.
Since $W$ will play the role of $U^\perp$, $k$ will usually denote the dimension of $W$, and not the codimension.

\begin{prop}
\label{prop:green_application_dimW}
Let $k < n$ and let $W\subset A_d$ be a subspace of dimension $k$. Let $l\in A_1$ be generic. Then $\dim \ol W=\dim W$ where $\ol W=W+\id{l}_d/\id{l}_d$.
\end{prop}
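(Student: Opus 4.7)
Since $\ol W$ is the image of $W$ under the quotient $A_d \twoheadrightarrow A_d/\id{l}_d$, we have $\ol W \cong W/(W \cap lA_{d-1})$, so the proposition amounts to $W \cap lA_{d-1} = 0$ for generic $l$. The plan is to apply Green's Hyperplane Restriction Theorem (\cref{thm:green_thm}) to the ideal $I := \id{W}$.

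With the notation of \ref{lab:setup_green} applied to $I$, we have $h_d = \bin{n-1+d}{d} - k$. Since $I_d = W$ and multiplication by $l$ is injective on $A_{d-1}$, a direct dimension count gives
\[
c_d = \dim A_d/(W + lA_{d-1}) = \bin{n-2+d}{d} - k + \dim(W \cap lA_{d-1}).
\]
It thus suffices to prove $c_d \le \bin{n-2+d}{d} - k$ for generic $l$, and by Green's Theorem this in turn follows from the Macaulay-representation identity
\[
\left(\bin{n-1+d}{d} - k\right)_{(d)}\Big|^{-1}_0 = \bin{n-2+d}{d} - k.
\]

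I would prove, more generally, that $(\bin{N+e}{e} - k)_{(e)}|^{-1}_0 = \bin{N+e-1}{e} - k$ for all $N \ge 1$, $e \ge 1$, and $0 \le k \le N$, by induction on $e$. The case $e = 1$ is immediate: $\bin{N+1}{1} - k = \bin{N+1-k}{1}$, whose shift is $\bin{N-k}{1} = N-k$. For $e \ge 2$, the hypothesis $k \le N < \bin{N+e-1}{e-1}$ forces $\bin{N+e-1}{e} \le \bin{N+e}{e} - k < \bin{N+e}{e}$, so the $e$-th Macaulay representation of $\bin{N+e}{e} - k$ begins with the leading term $\bin{N+e-1}{e}$ and continues with the $(e-1)$-th Macaulay representation of the remainder $\bin{N+e-1}{e-1} - k$. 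Applying the inductive hypothesis to the remainder and combining via Pascal's identity $\bin{N+e-1}{e} = \bin{N+e-2}{e} + \bin{N+e-2}{e-1}$ yields the claim. Specialising to $N = n-1$, $e = d$, $k = \dim W < n$ completes the argument. The main obstacle is this Macaulay bookkeeping; the hypothesis $k < n$ is used precisely to ensure that the leading binomial coefficient is correctly identified as $\bin{N+e-1}{e}$ at every inductive step, so that the shift formula propagates cleanly.
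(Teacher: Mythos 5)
Your proof is correct and takes essentially the same route as the paper: reduce to Green's Hyperplane Restriction Theorem applied to $\id{W}$ and verify the resulting Macaulay-shift identity $\left(\binom{n-1+d}{d}-k\right)_{(d)}\big|^{-1}_0=\binom{n-2+d}{d}-k$. The only difference is bookkeeping: the paper writes out the $d$-th Macaulay representation explicitly as a telescoping sum $\sum_{i=0}^{d-2}\binom{(n-2)+d-i}{d-i}+\binom{n-k}{1}$ and shifts it term by term, whereas you prove the identity by induction on the degree $e$ using Pascal's rule; both verifications are straightforward and equivalent (note your inductive step as written tacitly assumes $k\ge 1$, since for $k=0$ the leading term of the Macaulay representation is $\binom{N+e}{e}$ rather than $\binom{N+e-1}{e}$, but $k=0$ is trivial for the proposition anyway).
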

\begin{proof}
Using the notation of \cref{lab:setup_green} with $I=\id{W}$, we have 
\[
h_I(d)=h_d=\dim A_d-k=\sum_{i=0}^{d-2} \underbrace{\bin{(n-2)+d-i}{d-i}}_{=\dim A(n-1)_{d-i}} + \bin{n-k}{1}.
\]
Green's \cref{thm:green_thm} then shows
\[
\dim A_d/\id{W,l}_d = c_d\le \underbrace{\sum_{i=0}^{d-2} \bin{(n-2)+d-i-1}{d-i}}_{C: = } + \bin{n-k-1}{1}.
\]
As can be easily verified we have $C=\dim A(n-1)_d-(n-1)$ and therefore
\[
c_d\le \dim A(n-1)_d-(n-1)+\bin{n-k-1}{1} = \dim A(n-1)_d - k.
\]
Equivalently $\dim \ol W\ge k$.
Since $\dim \ol W\le \dim W=k$, it follows that $\dim \ol W=k$.
\end{proof}

\begin{rem}
The condition of \cref{prop:green_application_dimW} on the dimension, namely $k < n$ is necessary and tight in the following sense. Let $0\neq F\in A_{d-1}$ and let $W=FA_1$. Then $\dim W=n$ and $\dim\ol W=n-1<\dim W$ for a generic linear form $l\in A_1$.

In fact, it follows from \cite[Theorem 3.2]{ams2018} that every subspace $W$ of dimension $n$ such that $\dim \ol W < \dim W$ for a generic linear form $l\in A_1$, has the form $FA_1$ for some $F\in A_{d-1}$.
\end{rem}

\begin{cor}
\label{cor:dim_intersection}
Let $U\subset A_d$ be a subspace of codimension $k$ with $k\le n$. Then 
\[
\codim(U\cap \C[l_1,\dots,l_k]_d)=k
\]
for generic linear forms $l_1,\dots,l_k\in A_1$.

Especially, after applying a generic \cc\ to $U$, we have 
\[
\codim (U\cap A(k)_d)=k.
\]
\end{cor}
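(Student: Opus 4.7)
The plan is to dualize via \cref{lem:dual} and then iterate \cref{prop:green_application_dimW}. Set $W=U^\perp$, so $\dim W=k\le n$. For generic linear forms $l_1,\dots,l_k\in A_1$, set $V=\C[l_1,\dots,l_k]_d$ and let $\lambda_1,\dots,\lambda_{n-k}\in A_1$ span $\spn(l_1,\dots,l_k)^\perp$. By \cref{lem:dual},
\[
(U\cap V)^\perp\cong (W+V^\perp)/V^\perp,
\]
so it suffices to show the right-hand side has dimension exactly $k$.

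The idea is to reduce modulo the $\lambda_j$ one at a time. After a linear change of coordinates I may assume $\lambda_j=x_{n-j+1}$, so that successively quotienting by $\lambda_1,\dots,\lambda_j$ identifies the quotient with the polynomial subring $A(n-j)_d=\C[x_1,\dots,x_{n-j}]_d$. I will set $W^{(0)}=W$ and let $W^{(j)}\subset A(n-j)_d$ be the image of $W^{(j-1)}$ modulo $\lambda_j$.

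By induction on $j$, I expect $\dim W^{(j)}=k$ for every $j=0,1,\dots,n-k$: assuming $\dim W^{(j-1)}=k$, \cref{prop:green_application_dimW} applied inside the ambient polynomial ring on $n-j+1$ variables yields $\dim W^{(j)}=k$, provided that the inequality $k<n-j+1$ holds and $\lambda_j$ is generic. The former is equivalent to $j\le n-k$, so the induction runs for exactly $n-k$ steps and terminates with $\dim W^{(n-k)}=k$ inside $A(k)_d$. Since $W^{(n-k)}\cong (W+V^\perp)/V^\perp$, this gives $\codim(U\cap V)=k$ and proves the first statement. The ``especially'' clause is then immediate by choosing the change of coordinates so that $l_j=x_j$ for $j=1,\dots,k$, under which $V=A(k)_d$.

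The one delicate point, and what I expect to be the main technical step, is ensuring genericity propagates through the iteration: at each stage $j$, the proposition requires $\lambda_j$ to be generic with respect to $W^{(j-1)}$, which itself depends on the earlier $\lambda_i$. This should follow because the bad locus at each step is Zariski-closed in the space of available choices, and so a generic choice of the initial flag $(l_1,\dots,l_k)$, equivalently of the complementary linear forms $(\lambda_1,\dots,\lambda_{n-k})$, simultaneously avoids the finitely many closed conditions encountered along the induction.
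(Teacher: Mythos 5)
Your proof is correct and follows essentially the same route as the paper: dualize via \cref{lem:dual} and then apply \cref{prop:green_application_dimW} while passing to the quotient by one generic linear form at a time. The paper compresses the iteration (invoking \cref{prop:green_application_dimW} once for the quotient by all of $l_{k+1},\dots,l_n$), whereas you spell it out, correctly checking that $k<n-j+1$ holds at each of the $n-k$ steps and flagging the genericity-propagation point, which indeed reduces to the standard Zariski-openness argument.
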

\begin{proof}
Let $W=U^\perp$. If $k=n$ there is nothing to show, hence assume $k<n$. By \cref{lem:dual} it is enough to consider the dimension of $\ol W\subset A_d/\id{l_{k+1},\dots,l_n}_d$ for any basis $l_{k+1},\dots,l_n$ of the orthogonal complement of $\spn(l_1,\dots,l_k)$. Since $k<n$, it follows from \cref{prop:green_application_dimW} that $\dim W=\dim \ol W$.
\end{proof}

\begin{cor}
\label{cor:green_dim(W:l)}
Let $W\subset A_d$ be a subspace of dimension $k<n$ and let $l\in A_1$ a generic linear form. Then $\dim\, (W:l)=0$.
\end{cor}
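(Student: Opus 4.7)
The plan is to derive this as an immediate corollary of \cref{prop:green_application_dimW} by translating between the ideal quotient $(W:l)$ and the intersection $W \cap lA_{d-1}$.

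First, I would observe that multiplication by $l$ gives an injective linear map $A_{d-1} \to A_d$, since $A$ is a domain and $l \ne 0$. By the definition of the ideal quotient at degree $d-1$, we have
\[
(W:l) = \{q \in A_{d-1} \colon lq \in W\},
\]
and the image of $(W:l)$ under the multiplication-by-$l$ map is exactly $W \cap lA_{d-1}$. Since multiplication by $l$ is injective, this yields the identification $\dim (W:l) = \dim (W \cap lA_{d-1})$.

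Next, since $\id{l}_d = lA_{d-1}$, one has $\ol W = (W + lA_{d-1})/lA_{d-1}$. The standard dimension formula for vector subspaces then gives
\[
\dim \ol W = \dim(W + lA_{d-1}) - \dim lA_{d-1} = \dim W - \dim (W \cap lA_{d-1}),
\]
where in the last equality I again use the injectivity of multiplication by $l$ to conclude $\dim lA_{d-1} = \dim A_{d-1}$. Combining the two displays yields $\dim(W:l) = \dim W - \dim \ol W = k - \dim \ol W$.

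Since $k < n$ and $l$ is generic, \cref{prop:green_application_dimW} applies and gives $\dim \ol W = k$, hence $\dim(W:l) = 0$. There is no real obstacle here — the statement is a bookkeeping rephrasing of the previous proposition; the only subtlety is being careful about which map is injective and in which direction, but both facts are automatic for a polynomial ring.
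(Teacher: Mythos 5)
Your proof is correct and follows essentially the same route as the paper: you reduce to \cref{prop:green_application_dimW} by a dimension count, whereas the paper cites the short exact sequence from \cref{lab:setup_green}, but the two are just reformulations of the same rank-nullity bookkeeping around multiplication by $l$.
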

\begin{proof}
This follows from \cref{prop:green_application_dimW} and the exact sequence 
\[
0 \to A_{d-1}/(W:l) \to A_d/W \to A_d/\id{W,l}_d \to 0.
\]
in \cref{lab:setup_green}: the space in the middle has dimension $\dim A_d-k$ and the space on the right has dimension $\dim A(n-1)-k$ by \cref{prop:green_application_dimW}.
Hence, the one on the left-hand side has dimension $\dim A_d-k-\dim A(n-1)+k=\dim A_{d-1}$ and thus $\dim (W:l)=0$.
\end{proof}

This shows that condition (i) in \cref{rem:dual} is satisfied whenever $k<n$ and we go down by one variable. And it is in general not satisfied if $n\le k$ since we can take $W=FA_1$ for some $0\neq F\in A_{d-1}$.

Now we want to look at condition (ii) in \cref{rem:dual}. By \cref{lem:dual} asking whether $U': = U\cap A(n-1)$ is \bpf\ is the same as asking if the orthogonal complement contains no $d$-th power of a linear form. Thus assume that $U$ is \bpf\ and $W$ contains no $d$-th powers. 

Is it true in general that $\ol W\subset (A/\id{l})_d$ contains no $d$-th powers for generic $l\in A_1$ whenever $\dim W=\dim \ol W$? Sadly this is not the case as the next example shows.

\begin{example}
Let $W: = x_n^{d-1}A(n-1)_1\subset A_d$ and let $l\in A_1$ be a generic linear form. After scaling $l$, we can write $l=x_n+l'$ for some $l'\in A(n-1)_1$, hence $\ol W\subset (A/\id{l})_d$ is isomorphic to $(l')^{d-1}A(n-1)_1$. Then $(l')^d\in \ol W$ and $\dim W=\dim \ol W$.
\end{example}

However, it is true whenever the number of variables is large as the next theorem shows.
For convenience, we use the following notation. For a subspace $W\subset A_d$ we say $l\in A_1$ is $W$-generic if $l$ is generic in the sense of Green's \cref{thm:green_thm}.

\begin{thm}
\label{prop:stay_bpf}
Let $W\subset A_d$ be a subspace of dimension $k$ and let $n\ge 2k+1$. If $W$ contains no $d$-th power of a linear form, then the same holds for $\ol W=W+\id{l}_d/\id{l}_d\subset (A/\id{l})_d$ where $l\in A_1$ is a generic linear form.
\end{thm}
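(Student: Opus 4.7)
The plan is to reformulate the statement geometrically and reduce it to a dimension count on an incidence variety. Let $V_d \subset \P(A_d)$ denote the Veronese variety of $d$-th powers of linear forms, of dimension $n-1$. The hypothesis reads $\P(W) \cap V_d = \varnothing$, and the conclusion is equivalent to $(W + lA_{d-1}) \cap V_d \subseteq \C l^d$ for generic $l$, i.e., after projecting from the center $\P(lA_{d-1})$, the images of $\P(W)$ and $V_d \setminus \{[l^d]\}$ remain disjoint.

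I would set up the incidence
\[
\mathcal{I} := \{(l,\mu) \in \P(A_1) \times \P(A_1) : \mu \notin \C l,\ \mu^d \in W + lA_{d-1}\}
\]
and prove that the first projection $\mathcal{I} \to \P(A_1)$ is not dominant by bounding $\dim \mathcal{I} \le n-2$. Passing through the auxiliary variety
\[
\mathcal{J} := \{(l,\mu,w,q) : \mu \ne l,\ \mu^d = w + lq\} \subseteq \P(A_1) \times \P(A_1) \times W \times A_{d-1},
\]
which is birational to $\mathcal{I}$ for generic $l$ by \cref{prop:green_application_dimW} (applicable since $k \le (n-1)/2 < n$ forces $W \cap lA_{d-1} = 0$, so $(w,q)$ is unique given $(l,\mu)$), I project onto $(\mu,w) \in \P(A_1) \times W$. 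The fibers of this projection are finite---namely the linear factors of $\mu^d - w$ distinct from $\mu$, with $q$ then determined---so the image lies in the ``reducibility locus''
\[
T := \{(\mu,w) \in \P(A_1) \times W : \mu^d - w \in \mathrm{Red}\},
\]
where $\mathrm{Red} \subset A_d$ is the image of multiplication $A_1 \times A_{d-1} \to A_d$, of codimension $\binom{n-2+d}{d} - n + 1$.

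The set $T$ decomposes into the ``trivial'' component $\P(A_1) \times \{0\}$ (where $\mu^d - 0 = \mu^d$ has $\mu$ as its only linear factor, contributing nothing to $\mathcal{J}$ after the constraint $\mu \ne l$) and a ``non-trivial'' component whose dimension is bounded above by
\[
\dim(\P(A_1) \times W) - \operatorname{codim}_{A_d}(\mathrm{Red}) \;=\; 2n + k - 2 - \bin{n-2+d}{d}
\]
via a transversality / Bertini argument for the affine embedding $w \mapsto \mu^d - w$. An elementary check shows that $n \ge 2k+1$ together with $d \ge 2$ yields $\binom{n-2+d}{d} \ge n + k$ in every case except the single borderline triple $(d,k,n) = (2,1,3)$, so $\dim \mathcal{J} \le n - 2$ and the projection to $\P(A_1)$ is non-dominant, as required.

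The main obstacle is precisely the borderline case $(d,k,n) = (2,1,3)$, where $W = \C w$ is spanned by a non-square quadratic form in three variables and the key inequality fails by one. This case must be handled directly via the rank stratification of $w$: if $\operatorname{rank}(w) = 2$, write $w = l_1 l_2$ and observe that the bad $l$'s are exactly those in $\spn(l_1,l_2)$, a codimension-$1$ subvariety of $\P(A_1)$; if $\operatorname{rank}(w) = 3$, the bad $l$'s correspond to hyperplanes tangent to the smooth conic $\{w=0\}$, which form the dual conic---also a codimension-$1$ subvariety. In both sub-cases the bad locus is proper, completing the proof.
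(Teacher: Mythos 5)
Your reformulation and the incidence-variety setup (the spaces $\mathcal{I}$, $\mathcal{J}$, and the projection to $T$) are fine, but the central estimate $\dim T_{\text{nontrivial}} \le \dim(\P(A_1)\times W) - \codim_{A_d}(\mathrm{Red})$ is not justified and is in fact false. The ``transversality / Bertini'' step would require the affine slices $w\mapsto \mu^d - w$ to meet $\mathrm{Red}$ in the expected codimension, but these slices are not in general position: they always pass through $\mu^d\in V_d\subset\mathrm{Red}$, and when $W$ contains forms supported on few variables the slice can meet $\mathrm{Red}$ in far larger dimension than expected. Concretely, take $n=5$, $k=2$, $d=3$, and $W=l_1^2\,\spn(l_2,l_3)$ for independent $l_1,l_2,l_3\in A_1$; then $W$ contains no cube and $n=2k+1$, so the hypotheses hold. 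Your bound gives $\dim T_{\text{nontrivial}}\le 2n+k-2-\binom{n+d-2}{d}=10-\binom{6}{3}<0$, i.e.\ $T_{\text{nontrivial}}=\varnothing$. But for every nonzero $w=l_1^2\ell\in W$ and every $\mu\in\spn(l_1,\ell)$, the binary cubic $\mu^3-l_1^2\ell$ factors completely, so $(\mu,w)\in T_{\text{nontrivial}}$, which therefore has dimension at least $3=n-2$. The same breakdown already appears for $d=2$ whenever $W$ contains a rank-$2$ quadric (e.g.\ $n=4$, $k=1$, $W=\C\, l_1 l_2$ gives $\dim T_{\text{nontrivial}}\ge 2$, not $\le 1$). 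So the failure is not confined to the single triple $(2,1,3)$ you single out, and the unproved transversality claim is precisely the point that needs work.

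The paper's own argument takes a genuinely different and more robust route. It works with the zero locus $X=\vc(W)\subset\P^{n-1}$: a failure of the conclusion would make the generic hyperplane section of $X$ degenerate; an irreducible-component argument then forces $X$ itself to be degenerate, hence contained in a linear subspace $\vc(H_1,\dots,H_r)$ with $r\le k$; choosing $k+1$ generic linear forms $l_1,\dots,l_{k+1}$ independent of $H_1,\dots,H_r$ (this is exactly where $n\ge 2k+1$ is used) produces $k+1$ linearly dependent elements $L_i^d+l_ig_i$ of $W$, and Green's hyperplane restriction theorem then yields the contradiction. That argument automatically absorbs the low-rank / degenerate $W$'s on which your transversality fails. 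To rescue the incidence-variety framework you would need to stratify $W$ by the factoring structure of its elements and bound $T_{\text{nontrivial}}$ separately on each stratum --- which in effect reconstructs the paper's component analysis in different language.
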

\begin{proof}
Assume this is wrong.
Let $X=\vc(W)\subset\P^{n-1}$. Let $l\in A_1$ be generic, then $\vc(W,l)=\vc(W,l,L^d)$ for some $L\in A_1$ since $L^d+pl\in W$ for some $p\in A_{d-1}$.
Hence a generic hyperplane section of $X$ is degenerate. We claim that $X$ is degenerate. If any irreducible component of $X$ is non-degenerate, then so is a generic hyperplane section of this component and hence $X$. Therefore all irreducible components are degenerate. 
Assume $X$ is non-degenerate. Then there exist components $X_1,\dots,X_s$ each contained in a linear variety $T_i$ but the union of all $T_i$ is non-degenerate. Note that all $T_i$ have codimension at most $k$ and $n\ge 2k+1$, thus no $T_i$ has dimension 0. Intersecting $X$ with $\vc(l)$, each $X_i$ is a non-degenerate variety inside $T_i\cap\vc(l)$ and the union $\cup_{i=1}^s (T_i\cap\vc(l))$ is non-degenerate inside $\vc(l)$. But then $X\cap\vc(l)$ is non-degenerate in $\vc(l)$, a contradiction.

Hence there exists $H_1,\dots,H_r\in A_1$ linearly independent such that $X\subset \vc(H_1,\dots, H_r)$, $r\le k$ and $X$ is non-degenerate in $\vc(H_1,\dots,H_r)$.
Let $l_1,\dots,l_{k+1}\in A_1$ generic, then there exist $L_1,\dots,L_{k+1}\in\spn(H_1,\dots,H_r)$ such that $L_i^d+l_ig_i\in W$ for some $g_i\in A_{d-1}$ and $i=1,\dots,k+1$. Since $\dim W=k$ these $k+1$ forms are linearly dependent and there exist $\lambda_1,\dots,\lambda_{k+1}\in\C$ not all zero such that
\[
\sum_{i=1}^{k+1}\lambda_i (L_i^d+l_ig_i)=0.
\]
Since $l_1,\dots,l_{k+1}$ are generic and $r+k+1\le 2k+1$, the linear forms $H_1,\dots,H_r,l_1,\dots,l_{k+1}$ are linearly independent. After a \cc\ we may therefore assume that $H_i=x_i$ ($i=1,\dots,r$) and $l_i=x_{r+i}$ ($i=1,\dots,k+1$). Since
\[
\sum_{i=1}^{k+1}\lambda_i L_i^d +\sum_{i=1}^{k+1}\lambda_i l_ig_i=0.
\]
but no monomial in the second sum is contained in $\C[x_1,\dots,x_r]$ we see that both sums individually have to be 0. With $j:=\max(i\colon \lambda_i\neq 0)$ we have 
\[
l_jg_j\in\spn(l_1g_1,\dots,l_{j-1}g_{j-1}).
\]
Since $\dim\spn(l_1g_1,\dots,l_{j-1}g_{j-1})\le k$ it follows from Green's theorem that 
\[
(\spn(l_1g_1,\dots,l_{j-1}g_{j-1})\colon l)=\{0\}
\]
for generic $l\in A_1$. Especially, this is true for $l_j$, a contradiction.
\end{proof}

\begin{thm}
\label{thm:independent_bound}
Let $k\le d-1$. Then for every $n\ge 2$ and every \bpf\ subspace $U\subset A_d$ of codimension $k$ we have 
\[
\codim U^2\le m(2k,k,k).
\]
Especially, the number $m(2k,k,k)$ is independent of $n$ and $d$.
\end{thm}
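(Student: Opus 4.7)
The plan is to reduce, in two independent steps, to a \bpf\ subspace sitting inside $A(2k)_d$ of the same codimension and with at least as large a $\codim$ for its square; once there, \cref{cor:ss_bound_independent_of_d} kills the degree dependence and the bound becomes $m(2k,k,k)$ by definition of $m$.

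First I would treat the case $n \ge 2k+1$. Let $W := U^\perp \subset A_d$: it has dimension $k$ and, because $U$ is \bpf, contains no $d$-th power of a linear form. For a generic $l \in A_1$ set $\ol W := (W + \id{l}_d)/\id{l}_d$. By \cref{prop:green_application_dimW} (applicable since $k < n$) we have $\dim \ol W = k$, and by \cref{prop:stay_bpf} (applicable since $n \ge 2k+1$) the space $\ol W$ still contains no $d$-th power of a linear form. Translating through \cref{lem:dual}, the subspace $U' := U \cap \C[l_1,\ldots,l_{n-1}]_d$, for $l_1,\ldots,l_{n-1}$ a basis of $\spn(l)^\perp$, is \bpf\ of codimension $k$ in a polynomial ring in $n-1$ variables, so \cref{cor:reduction_number_of_variables} yields $\codim U^2 \le \codim (U')^2$ since $k \le d-1$. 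Iterating this step $n - 2k$ times (at each stage the current number of variables is still at least $2k+1$, so both hypotheses remain in force) produces a \bpf\ subspace $\tilde U \subset A(2k)_d$ of codimension $k$ with $\codim U^2 \le \codim \tilde U^2$. For the complementary case $n \le 2k$ I would instead lift, forming $\tilde U := U^{(2k-n)} \subset A(2k)_d$ as in \cref{prop:liftingfaces}. Part (i) keeps the codimension equal to $k$, and part (ii) says the change in $\codim$ of the square equals $(2k-n) \cdot h_{\id{U}}(2d-1)$, which vanishes by \cref{cor:degree2d-1} because $U$ is \bpf\ with $k < d$. Either way, one lands at $\tilde U \subset A(2k)_d$ of codimension $k$ with $\codim U^2 \le \codim \tilde U^2 \le m(2k,d,k) = m(2k,k,k)$, the final equality by \cref{cor:ss_bound_independent_of_d}; the ``especially'' clause is then tautological.

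The main obstacle is the simultaneous preservation of codimension and of the \bpf\ property under shrinking $n$. Codimension survives as soon as the current number of variables exceeds $k$, whereas preserving the \bpf\ property needs the strictly stronger $n \ge 2k+1$. This asymmetry is precisely what pins the target at $2k$ variables rather than anything smaller, and it is the conceptual crux of the argument; the remaining work is bookkeeping, namely threading each generic linear form through \cref{lem:dual} and finally matching with the definition of $m$.
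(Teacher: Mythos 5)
Your argument is correct and follows the same route as the paper: for $n>2k$, intersect with a generic linear subspace down to $2k$ variables (codimension preserved via Green's theorem, base-point-freeness preserved via \cref{prop:stay_bpf}), apply \cref{cor:reduction_number_of_variables}, then finish with \cref{cor:ss_bound_independent_of_d}; for $n\le 2k$, lift via \cref{prop:liftingfaces}. The only cosmetic differences are that you iterate the one-variable reduction explicitly where the paper jumps directly using \cref{cor:dim_intersection}, and you lift to $A(2k)_d$ rather than to $A(2k+1)_d$ as the paper does (both are fine, and the lifting step actually gives equality $\codim U^2=\codim\tilde U^2$, not just $\le$).
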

\begin{proof}
If $n\le 2k$ and $U\subset A_d$ is a \bpf\ subspace of codimension $k$, then by \cref{prop:liftingfaces} we have $\codim U^2=\codim\, (U^{(2k-n+1)})^2$ and $\codim U=\codim\, U^{(2k-n+1)}$ with $U^{(2k-n+1)}\subset A(2k+1)_d$. It is therefore enough to only consider the case $n > 2k$.

We apply a generic \cc\ to $U$. By \cref{cor:dim_intersection} it follows that $V: = U\cap A(2k)_d$ has codimension $k$ in $A(2k)_d$ and by \cref{prop:stay_bpf} the subspace $V$ is still \bpf. 

Using \cref{cor:reduction_number_of_variables} we get
$\codim U^2\le \codim V^2 \le m(2k,d,k)$.
By \cref{cor:ss_bound_independent_of_d} we finally have $m(2k,d,k) = m(2k,k,k)$ which concludes the proof.
\end{proof}

\begin{rem}
Assuming that $d$ is large enough is essential. Consider the following example: Let $R=A(4), \mm=\id{x_5,\dots,x_n}\subset A(n), n\ge 5$ and
\[
U=\spn(x_1^3,x_2^3,x_3^3,x_4^3,x_1^2x_2+x_3^2x_4)\subset A(4)_3.
\]
This subspace has codimension 15 in $A(4)_3$. We check with {\ttfamily SAGE} \cite{sage} that the Hilbert function of $\id{U}$ is given by $(1,4,10,15,15,7,1)$. Define the subspace
\[
V: =U^{(n-4)}=\bigoplus_{i=1}^3 \mm^i R_{3-i}\oplus U\subset A(n)_3.
\]
This subspace also has codimension 15 in $A(n)$ by \cref{prop:liftingfaces}. Again by \cref{prop:liftingfaces} we know that $\codim V^2=\codim U^2+7\cdot (n-4)$.

This shows that we cannot have a uniform bound for this combination of codimension and degree not depending on $n$.
\end{rem}

It seems likely that one cannot only reduce to $2k$ variables in \cref{prop:stay_bpf} but actually to $k+1$ variables. This is at least the only counterexample we know of (for $k\le n-1$). We also checked this in small cases for all monomial subspaces on a computer.

\begin{conj}
\label{conj:bpf_intersection}
Let $k\le d-1,n-1$, and $n\ge 3$. Let $W\subset A_d$ be a subspace of dimension $k$ and suppose that $W$ contains no $d$-th power of a linear form. Then for a generic linear form $l\in A_1$ it holds that either
\begin{enumerate}
\item $\ol W$ contains no $d$-th power of a linear form, or
\item $n=k+1$ and $W=L_1^{d-1}\C[L_2,\dots,L_{k+1}]_1$ for some basis $L_1,\dots,L_{k+1}$ of $A(k+1)_1$.
\end{enumerate}
\end{conj}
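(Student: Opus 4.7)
The plan is to follow the proof of \cref{prop:stay_bpf} and treat the range $k+1\le n\le 2k$ that it does not cover. Assume $\ol W$ contains a $d$-th power $L_l^d$ (with $L_l\notin\spn(l)$) for generic $l$, so $L_l^d+lp\in W$ and $X\cap\vc(l)\subset\vc(L_l)$. As in \cref{prop:stay_bpf}, this implies that every irreducible component $X_i$ of $X=\vc(W)$ is degenerate, with linear span $T_i$ of codimension at most $k$ (since $\dim X_i\ge n-k-1$). The contradiction in \cref{prop:stay_bpf} carries through whenever every $T_i$ has dimension $\ge 1$; it can fail only when some $T_i$ is a point, which forces $\codim T_i=n-1\le k$, and together with the assumption $k\le n-1$ this gives $n=k+1$.

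So assume $n=k+1$, and let $\xi$ be an isolated point component of $X$. If two or more positive-dimensional components had linear spans not contained in a common hyperplane of $\P^{n-1}$, the union of these spans would be non-degenerate, and since isolated points are missed by a generic $\vc(l)$, we would obtain a non-degenerate section $X\cap\vc(l)$ in $\vc(l)$, a contradiction. Therefore all positive-dimensional components of $X$ lie in a common hyperplane $H=\vc(L_1)$. A dimension count using $\dim W=k=n-1$ then forces the positive-dimensional part of $X$ to fill all of $H$: otherwise $W$ would embed into the strictly larger ideal $I(X)_d$, exceeding its allotted dimension $k$. Hence $\vc(L_1)\subset X$, and by the Nullstellensatz $W\subset L_1 A_{d-1}$; we may write $W=L_1V$ with $V\subset A_{d-1}$ of dimension $k$.

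Substituting $L_1=0$ in the relation $L_l^d+lp=L_1v\in W$ gives $(L_l|_{L_1=0})^d=-(l|_{L_1=0})(p|_{L_1=0})$ in $A(n-1)_d$, so $l|_{L_1=0}$ divides $L_l|_{L_1=0}$, forcing $L_l\in\spn(L_1,l)$. Writing $L_l=\lambda L_1+\mu l$, expanding $L_l^d$, and cancelling the factor $L_1$ yields $\lambda^d L_1^{d-1}\in V+lA_{d-2}$, so (after rescaling, noting $\lambda\ne 0$) $\ol V$ contains the fixed $(d-1)$-th power $\bar L_1^{d-1}$ for generic $l$. We then prove by induction on $d$ the strengthened statement: \emph{if $V\subset A_{d-1}$ has $\dim V=n-1$, contains no $(d-1)$-th power, and $\ol V\ni\bar L_1^{d-1}$ for generic $l$, then $V=L_1^{d-2}V'$ for a unique hyperplane $V'\subset A_1$.} The base case $d=k+1$ reduces to checking that a $k$-dimensional $V\subset A_k$ with the marked-power condition and no $k$-th power is of the form $L_1^{k-1}V'$; the inductive step reuses the analysis above applied to $V$ and the marked linear form $L_1$. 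Iterating yields $W=L_1^{d-1}V'$; the hypothesis that $W$ contains no $d$-th power prevents $L_1\in V'$, so any basis $L_2,\dots,L_{k+1}$ of $V'$ extends $L_1$ to a basis of $A_1$, recovering case~(ii).

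The main obstacle lies in the dimension argument in the second paragraph, showing that $X$ contains the entire hyperplane $\vc(L_1)$ rather than merely a proper subvariety spanning $H$; this requires a careful use of Macaulay's theorem or apolarity, together with the \bpf\ hypothesis, to rule out configurations where $W$ ``partially fills'' $H$. The inductive step in the third paragraph is also delicate, since the marked-power version is a strengthening of the conjecture's conclusion that must be established simultaneously in the induction.
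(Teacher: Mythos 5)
This statement is \Cref{conj:bpf_intersection}, which the paper explicitly leaves open: the text surrounding it says only that the $n\ge 2k+1$ case is covered by \Cref{prop:stay_bpf}, that $k=1$ follows from a geometric observation, and that small monomial cases were checked by computer. There is therefore no proof in the paper to compare your attempt against, and you should be aware you are attempting an open problem.

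On the merits of the attempt itself, the central gap is in the first paragraph, and it is more serious than the ones you flag yourself. You assert that ``the contradiction in \Cref{prop:stay_bpf} carries through whenever every $T_i$ has dimension $\ge 1$,'' so that the reduction lands you in the case $n=k+1$ with an isolated point. But the hypothesis $n\ge 2k+1$ is used in that proof in at least two more places beyond ruling out $0$-dimensional $T_i$. First, the step ``the union $\cup_i (T_i\cap \vc(l))$ is non-degenerate inside $\vc(l)$'' fails in general once $n\le 2k$: for example two disjoint lines in $\P^3$ (so $\codim T_i=2\le k$ is allowed as soon as $k\ge 2$) span $\P^3$, yet a generic plane meets them in two points spanning only a line. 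The inequality $n\ge 2k+1$ guarantees $\dim(T_i\cap T_j)\ge n-1-2k\ge 0$, which is what makes the span survive a hyperplane section. Second, after writing $X\subset\vc(H_1,\dots,H_r)$ with $r\le k$ and choosing generic $l_1,\dots,l_{k+1}$, the proof applies a coordinate change sending $H_i\mapsto x_i$ and $l_j\mapsto x_{r+j}$; this requires $r+k+1\le n$, which again uses $n\ge 2k+1$ via $r\le k$. Neither of these is addressed by assuming $\dim T_i\ge 1$, so your reduction to $n=k+1$ is not established in the range $k+2\le n\le 2k$.

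For the parts you flag as incomplete: beyond the missing ``$X\supset\vc(L_1)$'' argument, there is another gap in the inductive setup. From $W=L_1V$ with no $d$-th power you correctly get $L_1^{d-1}\notin V$, but your strengthened inductive statement assumes $V$ contains \emph{no} $(d-1)$-th power of a linear form, and that stronger hypothesis does not follow: $W=L_1V$ having no $d$-th power only excludes $L_1^{d-1}$ from $V$, not $L'^{\,d-1}$ for $L'\notin\spn(L_1)$. So the induction as stated cannot be applied to $V$ without first ruling this case out. In short, the overall strategy (extend \Cref{prop:stay_bpf}, isolate the degenerate hyperplane, factor $W=L_1V$, induct on $d$) is reasonable and matches what one would try, but the reduction in paragraph one is unjustified and the inductive hypothesis in paragraph three does not match what you actually know about $V$.
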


This would allow us to show $\codim U^2\le m(k,k,k)$ in \cref{thm:independent_bound} with an additional argument.

\begin{rem}
(i) The conjecture is certainly false if $n=k$ is allowed: for $n=k$ let $W=x_1^{d-1}\C[x_2,\dots,x_n]_1\oplus \spn(p)$ for some generic $p\in A_d$. Since $p$ is generic, $W$ contains no $d$-th powers, but $\ol W\cong x_1^{d-1}\C[x_1,\dots,x_{n-1}]_1\oplus\spn(\ol p)$ does contain one where $\ol W\subset A_d/\id{l}_d$ for a generic linear form $l\in A_1$.

(ii) The conjecture is true for $n\ge 2k+1$ by \cref{prop:stay_bpf}. For $k=1$ it also follows from a simple geometric observation: if $W=\spn(p)$ and $p$ is not a power of a linear form, then $\sz(p)$ is non-degenerate. Hence, the same holds for a generic hyperplane section which therefore cannot be defined by the power of a linear form.
\end{rem}

\begin{rem}
By definition of $m(n,d,k)$ we are considering complex subspaces of $A_d$. However, when studying faces of \gsa\ we are only interested in real subspaces.
As we have seen $m(n,d,k)$ is realized by a monomial subspace, hence we can consider it as a real subspace of $\Rx_d$. So the bound is also tight in the real case.

In general, we do not know all attainable values for $\codim U^2$ while fixing $n,d$ and $k$. Even in the case of codimension 2 subspaces, we do not know whether some values are possible.
\end{rem}

\section{Determining the upper bound}
\label{sec:determine_bound}

In this section, we determine the value $m(2k,k,k)$. Moreover, we find the strongly stable subspace that realizes $m(k,k,k)$, and show that is it unique. We rely on \cref{prop:monomial_lift} and \cref{prop:subspace_monomial_reduction} to do most of the combinatorial work for the main results, which are \cref{thm:m-k-k-k} and \cref{thm:main_bound}.

We always assume $k\in\N,\, k\ge 2$.

\begin{lem}
\label{lem:2k_le_k}
For any $k\ge 2$ we have $m(2k,k,k)\le k^2+m(k,k,k)$.
\end{lem}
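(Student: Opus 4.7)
The plan is to realize $m(2k,k,k)$ by a strongly stable subspace and then stratify the monomials of $A(2k)_{2k}$ by their total degree in the variables $x_{k+1},\dots,x_{2k}$. By \cref{prop:ss_is_minimal} I may take $U\subset A(2k)_k$ strongly stable of codimension $k$ with $\codim U^2=m(2k,k,k)$. Setting $\mm=\id{x_{k+1},\dots,x_{2k}}\subset A(2k)$, \cref{lem:ss_small_codim}(iii) applies (since $\codim U=k\le 2k$) and gives $U^\perp\subset A(k)_k$. Consequently
\[
U = V\oplus M,\qquad V:=U\cap A(k)_k,\quad M:=\mm\cap A(2k)_k,
\]
where $V$ has codimension $k$ in $A(k)_k$ and $M$ is spanned by all monomials of degree $k$ containing at least one $x_j$ with $j\ge k+1$.

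For a monomial $T\in A(2k)_{2k}$ let $s(T)$ denote the total exponent of $x_{k+1},\dots,x_{2k}$ in $T$, and bound the contribution to $\codim U^2$ from each value of $s(T)$ separately. Since $VM$ and $M^2$ are both contained in $\mm$, a monomial $T$ with $s(T)=0$ lies in $U^2$ exactly when $T\in V^2$, so this stratum contributes at most $\codim_{A(k)_{2k}} V^2 \le m(k,k,k)$. For $s(T)\ge 2$ one may split the $\mm$-variables and the $A(k)$-variables of $T$ into two groups of degree $k$ each containing at least one $\mm$-variable, producing a factorization $T=ab$ with $a,b\in M\subset U$; hence every such $T$ lies in $U^2$ and this stratum contributes $0$.

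The key stratum is $s(T)=1$. Here $T=x_jT'$ for the unique index $j\in\{k+1,\dots,2k\}$ involved in $T$ and some $T'\in A(k)_{2k-1}$. Any factorization $T=ab$ with $a,b\in U$ must satisfy $s(a)+s(b)=1$, so up to order $a\in M$ with $s(a)=1$ and $b\in V$; the unique $\mm$-variable of $a$ must be $x_j$, giving $a=x_ja'$ with $a'\in A(k)_{k-1}$, and hence $T'=a'b\in A(k)_{k-1}V$. Conversely, any such decomposition of $T'$ lifts to $T=(x_ja')b\in MV\subset U^2$ because every monomial of positive $\mm$-degree belongs to $M$. Thus $T\in U^2$ iff $T'\in A(k)_{k-1}V$, and summing over the $k$ possible values of $j$ contributes $k\cdot\codim_{A(k)_{2k-1}}(A(k)_{k-1}V)=k\cdot h_{\id V}(2k-1)$.

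It remains to bound $h_{\id V}(2k-1)$. Viewing $\id V$ inside $A(k)$, one has $h_{\id V}(k)=\codim_{A(k)_k}V=k$, so $k\ge h_{\id V}(k)$, and \cref{thm:macaulay}(ii) forces the sequence $(h_{\id V}(j))_{j\ge k}$ to be non-increasing. In particular $h_{\id V}(2k-1)\le k$. Adding the three contributions then gives $\codim U^2\le m(k,k,k)+k^2$, as desired. The main obstacle is the analysis of the $s=1$ stratum: it crucially uses that $M$ contains every monomial of positive $\mm$-degree, which pins down the shape of any $U^2$-factorization; the $s\ge 2$ case is a direct combinatorial splitting and the final bookkeeping is Macaulay's bound.
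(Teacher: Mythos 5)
Your proof is correct and rests on the same core reduction as the paper's: realize $m(2k,k,k)$ by a strongly stable $U$, set $V=U\cap A(k)_k$ (of codimension $k$ because $U^\perp\subset A(k)_k$), and bound $\codim U^2$ by $\codim V^2 + k\cdot h_{\id V}(2k-1)$ with the last factor controlled by Macaulay. The only difference is that the paper obtains the inequality $\codim U^2\le \codim V^2 + k\cdot\dim(A(k)/\id V)_{2k-1}$ by citing \cref{thm:reduction_number_of_variables}, whereas you re-derive it from scratch by stratifying monomials of $A(2k)_{2k}$ according to their $\mm$-degree; since $U$ is a monomial subspace this specialization is clean (the mixed term $W$ from the proof of \cref{thm:reduction_number_of_variables} vanishes), and it makes the argument self-contained at the cost of repeating work already done in that theorem. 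Your handling of the Macaulay step via \cref{thm:macaulay}(ii) is in fact slightly more explicit than the paper's appeal to \cref{cor:macaulay_gotzmann}(i), which really needs the non-increasing property in degrees $\ge k$ rather than just the $k\to k+1$ step.
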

\begin{proof}
Let $U\subset A(2k)_k$ be some strongly stable subspace of codimension $k$ that realizes $m(2k,k,k)$. 
Since $\codim U=k$, every monomial not contained in $U$ is contained in $A(k)_k$. Especially, the codimension of $V:=U\cap A(k)_k$ is $k$. Using \cref{thm:reduction_number_of_variables} we see
\begin{align*}
\codim U^2 &\le \codim V^2 + (2k-k)\dim (A(k)/\id{V})_{2k-1}\\
&\le m(k,k,k) +k\dim (A(k)/\id{V})_{2k-1}.
\end{align*}
Since $U\subset A(k)_k$ has codimension $k$, we see from \cref{cor:macaulay_gotzmann} (i) that 
\[
\dim (A(k)/\id{V})_{2k-1}\le k
\]
which finishes the proof.
\end{proof}

In fact, it follows from \cref{rem:HF_ss_subspace} and \cref{rem:variable_reduction_tight} that we have equality in the last lemma.

To determine $m(2k,k,k)$ it now suffices to determine a bound for $m(k,k,k)$.
We show that the only strongly stable subspace realizing the bound $m(k,k,k)$ for $k\ge 2$ is $U=\spn(x_1^k,x_1^{k-1}x_2,\dots,x_1^{k-1}x_k)^\perp$ and $m(k,k,k)=\bin{k+2}{3}$.

First, we show that the subspace $U$ above does realize the bound $m(k,k,k)$.

\begin{lem}
\label{lem:lex_seg}
Let $U\subset A(k)_k$ be the subspace of codimension $k$ spanned by all monomials of degree $k$ except for $x_1^d,x_1^{d-1}x_2,\dots,x_1^{d-1}x_k$. Then $\codim U^2=\bin{k+2}{3}$.
\end{lem}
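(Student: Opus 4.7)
Since the apolarity dual $U^\perp=\spn(x_1^{k},x_1^{k-1}x_2,\dots,x_1^{k-1}x_k)=x_1^{k-1}A(k)_1$ is spanned by monomials, so is $U$: it is the span of exactly those monomials of $A(k)_k$ whose $x_1$-exponent is at most $k-2$. Consequently $U^2\subset A(k)_{2k}$ is a monomial subspace as well, so the problem reduces to a purely combinatorial count. Concretely, a monomial $M=x_1^{a_1}\cdots x_k^{a_k}\in A(k)_{2k}$ lies in $U^2$ iff it admits a factorization $M=M_1M_2$ with both $M_i$ of $x_1$-exponent at most $k-2$; in terms of exponents, iff there exist integers $0\le b_i\le a_i$ with $\sum_i b_i=k$, $b_1\le k-2$, and $a_1-b_1\le k-2$.

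The key observation is the following: $M\notin U^2$ if and only if $a_1\ge 2k-3$. For the ``if'' direction, any split $a_1=b_1+(a_1-b_1)$ with $b_1,a_1-b_1\ge 0$ has larger part at least $\lceil a_1/2\rceil\ge k-1$, so one of $M_1,M_2$ necessarily lies in $x_1^{k-1}A(k)_1=U^\perp$ and hence not in $U$. For the ``only if'' direction, assume $a_1\le 2k-4$; then the interval $I:=[\max(0,a_1-k+2),\min(a_1,k-2)]$ is non-empty (check the two cases $a_1\le k-2$ and $k-2<a_1\le 2k-4$ separately), and any $b_1\in I$ extends to a valid tuple $(b_2,\dots,b_k)$ with $0\le b_i\le a_i$ and $\sum_{i\ge 2}b_i=k-b_1$, because $0\le k-b_1\le 2k-a_1=\sum_{i\ge 2}a_i$ leaves enough room to distribute the remaining exponents.

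It then suffices to count the monomials of degree $2k$ in $A(k)$ with $a_1\in\{2k-3,2k-2,2k-1,2k\}$. Stratifying by the value of $a_1$ and noting that for each such value the complementary factor is a monomial in $\C[x_2,\dots,x_k]$ of degree $2k-a_1$, the total count is
\[
\binom{k-2}{0}+\binom{k-1}{1}+\binom{k}{2}+\binom{k+1}{3}=\binom{k+2}{3}
\]
by the hockey-stick identity (equivalently, repeated application of Pascal's rule). The argument uses no tool beyond the basic combinatorics of monomial subspaces; the only step requiring any attention is the verification that the interval $I$ above is non-empty, which is a short case distinction.
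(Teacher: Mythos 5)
Your proof is correct and takes essentially the same approach as the paper: both reduce to counting the degree-$2k$ monomials with $x_1$-exponent $\ge 2k-3$ (the paper simply lists them as $x_1^{2k-3}x_ix_jx_l$, $i,j,l\in\{1,\dots,k\}$, after an unjustified ``we easily see''), and both arrive at $\binom{k+2}{3}$ via Pascal's rule. Your writeup actually supplies the omitted verification that a monomial lies in $U^2$ if and only if its $x_1$-exponent is at most $2k-4$, which is the only nontrivial point.
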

\begin{proof}
We easily see that the only monomials not contained in $U^2$ are 
\begin{itemize}
\item $x_1^{2k-1}x_i$ for $i=1,\dots,k$,
\item $x_1^{2k-2}x_ix_j$ for $i,j=2,\dots,k$,
\item $x_1^{2k-3}x_ix_jx_l$ for $i,j,l=2,\dots,k$
\end{itemize}
or equivalently $x_1^{2k-3}x_ix_jx_l$ for $i,j,l=1,\dots,n$. These are a total of $\frac{1}{6}(k^3+3k^2+2k)=\bin{k+2}{3}$ monomials.
\end{proof}

\begin{thm}
\label{thm:m-k-k-k}
For any $k\ge 2$ we have $m(k,k,k)=\bin{k+2}{3}$.
\end{thm}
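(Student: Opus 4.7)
The lower bound $m(k,k,k) \ge \binom{k+2}{3}$ is immediate from \cref{lem:lex_seg}, which exhibits an explicit strongly stable subspace achieving this value. For the upper bound, \cref{prop:ss_is_minimal} lets me restrict attention to strongly stable $U \subset A(k)_k$ of codimension $k$. By \cref{lem:ss_small_codim}(ii), each of the $k$ monomials of $U^\perp$ is divisible by $x_1$, and by \cref{lem:ss_small_codim}(iii) they all lie in $A(r)_k$ for some $r \le k$; let $r$ be the minimum such integer. The plan is a case analysis on $r$.

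When $r = k$, I show that $U^\perp$ is forced to coincide with the extremal set of \cref{lem:lex_seg}. Since $r = k$, some $M \in U^\perp$ is divisible by $x_k$; by iterated use of the complementary strong-stability property (any variable appearing in a monomial of $U^\perp$ may be replaced by a smaller one), I first push every exponent of $x_2, \ldots, x_{k-1}$ into $x_1$, obtaining $x_1^{k-a_k} x_k^{a_k} \in U^\perp$ where $a_k$ is the original $x_k$-exponent; then, by repeatedly trading one $x_k$ for one $x_1$, I lower the $x_k$-exponent to $1$ and reach $x_1^{k-1} x_k \in U^\perp$. A second round of push-downs (replacing $x_k$ by $x_j$ for each $j < k$) produces $x_1^{k-1} x_j \in U^\perp$ for every $j = 2, \ldots, k$, together with $x_1^k$. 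These are $k$ distinct monomials, so they exhaust $U^\perp$; hence $U$ is the subspace of \cref{lem:lex_seg}, giving $\codim U^2 = \binom{k+2}{3}$ and simultaneously the uniqueness of the extremal subspace.

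When $r < k$, setting $U' := U \cap A(r)_k$ (codimension $k$ in $A(r)_k$), the combination of \cref{thm:reduction_number_of_variables} with \cref{cor:macaulay_gotzmann} (applied to $U'$, which has codimension $k \le d = k$, forcing $\dim (A(r)/\langle U'\rangle)_{2k-1} = k$) yields
\[
\codim_{A(k)_{2k}} U^2 \;\le\; (k-r)\,k + \codim_{A(r)_{2k}} (U')^2 \;\le\; (k-r)\,k + m(r,k,k).
\]
The main obstacle is to verify that this right-hand side is strictly less than $\binom{k+2}{3}$ whenever $r < k$. The boundary case $r = 2$ is transparent: $(U')^\perp$ must consist of all $k$ monomials $x_1^k, x_1^{k-1}x_2, \ldots, x_1 x_2^{k-1}$, forcing $U' = \spn(x_2^k)$, so $\codim (U')^2 = 2k$ and the sum is $k^2 < \binom{k+2}{3}$ for $k \ge 3$. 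For intermediate $2 < r < k$, a sharp enough bound on $m(r,k,k)$ is obtained from the companion results \cref{prop:monomial_lift} and \cref{prop:subspace_monomial_reduction}, which formalize how restricting $U^\perp$ to fewer variables strictly decreases the associated codimension; this combinatorial bookkeeping is the technical heart of the proof, and completing it is where most of the work lies.
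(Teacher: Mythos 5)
Your lower bound (via \cref{lem:lex_seg}) and your case $r=k$ are both correct; the $r=k$ argument re-derives what the paper isolates as \cref{lem:p_determines_subspace}, and $r=2$ is a clean direct computation. The problem is the case $2<r<k$, which you flag as ``the technical heart'' but leave open, and the lemmas you point to do not close it. Note first that since $U^\perp\subset A(r)_k$, the space $U$ is exactly the iterated lift $U'^{(k-r)}$ of $U':=U\cap A(r)_k$, so \cref{prop:liftingfaces} gives the \emph{equality} $\codim U^2=(k-r)k+\codim(U')^2$ (and the degree-$(2k-1)$ Hilbert value is exactly $k$ by Gotzmann persistence, cf.\ \cref{rem:HF_ss_subspace} --- \cref{cor:macaulay_gotzmann} alone only gives $\le k$). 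What you must prove is the \emph{strict} inequality $m(r,k,k)<\bin{k+2}{3}-(k-r)k$ for $2<r<k$. The obvious induction on $r$ via lifting yields only the non-strict bound $m(r,k,k)+(k-r)k\le m(k,k,k)$, because a realizer of $m(r+1,k,k)$ could a priori just be a lift of a realizer of $m(r,k,k)$. And \cref{prop:monomial_lift}/\cref{prop:subspace_monomial_reduction} do not supply the missing strictness: they compare strongly stable subspaces of codimension $k+1$ versus $k$ in the \emph{same} number $k$ of variables, whereas you need to compare subspaces of the \emph{same} codimension $k$ in $r$ versus $r+1$ variables --- a genuinely different combinatorial statement that the paper never establishes.

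The paper avoids this by inducting on $k$ rather than sweeping over $r$. Given a strongly stable $U\subset A(k+1)_{k+1}$ of codimension $k+1$ realizing $m(k+1,k+1,k+1)$, either $x_1^{k}x_{k+1}\notin U$ and \cref{lem:p_determines_subspace} forces $U$ to be the extremal subspace, or $U=x_{k+1}A(k+1)_{k}\oplus V$ with $V\subset A(k)_{k+1}$ strongly stable of codimension $k+1$; it then enlarges $V$ by one monomial to codimension $k$ in the same $k$ variables (\cref{lem:bigger_ss_subspace}), and \cref{prop:subspace_monomial_reduction} --- built on the cardinality bound in \cref{prop:monomial_lift} --- gives the strict control on how $\codim V^2$ drops. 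That one codimension-step is where the real work lives. Your framing does not reduce to it, so you would have to prove from scratch an analogue of \cref{prop:subspace_monomial_reduction} adapted to changing the number of variables rather than the codimension; as written, the proposal has a genuine gap.
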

\begin{proof}
We prove this by induction on $k$. For $k=2$ this is immediately checked.
Assume the claim holds for some $k\ge 2$. Let $U\subset A(k+1)_{k+1}$ be a strongly stable subspace realizing the bound $m(k+1,k+1,k+1)$. We write $d=k+1$ for the degree. If $x_1^{d-1}x_{k+1}$ is not contained in $U$, then $U=\spn(x_1^d,x_1^{d-1}x_2,\dots,x_1^{d-1}x_{k+1})^\perp$ by \cref{lem:p_determines_subspace}. By \cref{lem:lex_seg} we then have $\codim U^2=\bin{k+3}{3}=\bin{(k+1)+2}{3}$.
We show that this is the only strongly stable subspace realizing the upper bound. 
For the sake of contradiction assume that $x_1^{d-1}x_{k+1}$ is contained in $U$, then no monomial in $U^\perp$ is divisible by $x_{k+1}$. Indeed, if there was a monomial $M\in U^\perp$ divisible by $x_{k+1}$, we can write $M=T x_{k+1}$ with $T\in A(k+1)_{d-1}$, then $M=T\frac{x_1^{d-1}x_{k+1}}{x_1^{d-1}}\in U$ as $x_1^{d-1}x_{k+1}\in U$ and $U$ is strongly stable.
Therefore, we can write
\[
U=x_{k+1}A(k+1)_{d-1}\oplus V
\]
where $V\subset A(k)_d$ is a strongly stable subspace of codimension $k+1$. By \cref{thm:reduction_number_of_variables} we now have
\[
m(k+1,d,k+1)=\codim U^2\le \codim V^2 + k+1\le m(k,d,k+1)+k+1.
\]
In \cref{prop:subspace_monomial_reduction} we show that $m(k,d,k+1)< m(k,k,k)+\bin{k+1}{2}$, then we have
\[
m(k+1,k+1,k+1)=\codim U^2 < \underbrace{m(k,k,k)}_{=\bin{k+2}{3}}+\bin{k+1}{2}+k+1=\bin{k+3}{3}.
\]
However, the subspace $W: = \spn(x_1^d,x_1^{d-1}x_2,\dots,x_1^{d-1}x_{k+1})^\perp$ satisfies $\codim W^2=\bin{k+3}{3}$. This yields the following contradiction 
\[
\bin{k+3}{3}=\codim W^2\le m(k+1,k+1,k+1)< \bin{k+3}{3}.
\]
\end{proof}

It is left to prove \cref{prop:subspace_monomial_reduction} which we used in the last proof.
For this we first need some more preparation.

\begin{dfn}
For any monomial $M\in A_d$ denote by $\p(M)$ the smallest integer $j>1$ such that $x_j\vert M$. If $M=x_1^d$, we define $\p(M): = 1$. By $M^-$ we denote the \todfn{reduction of} $M$, which is the monomial $x_1\frac{M}{x_{\p(M)}}$. On the other hand, we write $M^+$ for the set of all monomials $T\in A_d$ such that $M$ is the reduction of $T$.
\end{dfn}

\begin{example}
Consider the monomial $M=x_1^2x_3x_4\in A(4)_4$, then $\p(M)=3$ and $M^-=x_1^3x_4$. The set $M^+$ consists of the monomials $x_1x_2x_3x_4$ and $x_1x_3^2x_4$. The monomial $x_1x_3x_4^2$ is not contained in $M^+$ since its reduction is $x_1^2x_4^2$.

If $M=x_1^d$, then by definition $M^-=x_1^d$ and $M^+=\{x_1^{d-1}x_i\colon 1\le i\le n\}$.
\end{example}

\begin{lem}
\label{lem:monomial_reduction}
Let $M\in A_d$ be a monomial divisible by $x_1$ with $\p(M)>1$. Then $\vert M^+\vert = \p(M)-1$.
\end{lem}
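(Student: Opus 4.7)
The plan is to establish a bijection between $M^+$ and the set $\{2,3,\dots,\p(M)\}$, from which the count $|M^+|=\p(M)-1$ follows immediately. To set this up, I would first observe that if $T\in M^+$ and $j:=\p(T)$, then the equation $M=T^-=x_1T/x_j$ gives $T=x_j\cdot(M/x_1)$, which makes sense precisely because $x_1\mid M$ by hypothesis. Hence every $T\in M^+$ has the form $T_j:=x_j\cdot(M/x_1)$ for some integer $j>1$, and distinct values of $j$ clearly yield distinct monomials. The problem thus reduces to determining the set of $j>1$ for which $\p(T_j)=j$.

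Write $p:=\p(M)$ and $M=x_1^{a_1}x_p^{a_p}\prod_{i>p}x_i^{a_i}$ with $a_1\ge 1$, $a_p\ge 1$, and no factors $x_i$ for $1<i<p$. I would then handle three cases separately. If $2\le j<p$, then $x_j\mid T_j$ while no $x_i$ with $1<i<j$ divides $T_j$, so $\p(T_j)=j$. If $j=p$, then $T_p=x_1^{a_1-1}x_p^{a_p+1}\prod_{i>p}x_i^{a_i}$; even when $a_1=1$ (so that $x_1\nmid T_p$), the smallest index exceeding $1$ that divides $T_p$ is still $p$, giving $\p(T_p)=p$. Finally, if $j>p$, the monomial $T_j$ remains divisible by $x_p$, so $\p(T_j)=p<j$, meaning $T_j\notin M^+$.

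Combining these three cases, the valid indices are precisely $\{2,\dots,p\}$, so $|M^+|=p-1=\p(M)-1$. I do not foresee any serious obstacle: the only spot requiring care is the boundary subcase $j=p$ with $a_1=1$, where $x_1$ disappears from $T_p$, and one must verify that this does not alter the value $\p(T_p)=p$.
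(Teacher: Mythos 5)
Your proof is correct and takes essentially the same approach as the paper: both identify the elements of $M^+$ as $x_j\cdot(M/x_1)$ for $j=2,\dots,\p(M)$ by checking that $\p\bigl(x_j\cdot(M/x_1)\bigr)=j$ exactly for these indices. You are a bit more explicit about the reverse containment and the boundary subcase $j=\p(M)$ with $x_1$ appearing to the first power, but the underlying argument is the same.
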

\begin{proof}
$M^+$ consists of the elements $x_j\frac{M}{x_1}$ for $j=2,\dots,\p(M)$: The variables $x_2,\dots,x_{\p(M)-1}$ do not appear in $M$ by definition of $\p(M)$. Hence, for $2\le j\le \p(M)$ we see $\p\left(x_j\frac{M}{x_1}\right)=j$, and therefore
$\left(x_j\frac{M}{x_1}\right)^-=\frac{x_1}{x_j}\left(x_j\frac{M}{x_1}\right)=M.$
\end{proof}

\begin{lem}
\label{lem:p_determines_subspace}
Let $U\subset A_d$ be a strongly stable subspace of codimension $k\le n$. If there exists a monomial $M\in U^\perp$ such that $\p(M)=k$, then $U=\spn(x_1^d,x_1^{d-1}x_2,\dots,x_1^{d-1}x_k)^\perp$.
\end{lem}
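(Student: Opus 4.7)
The plan is to exploit the reversed strong stability on $U^\perp$ (from the remark following the definition) to produce $k$ explicit linearly independent monomials, which must then span $U^\perp$ since $\dim U^\perp = k$.

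First, I would pin down the shape of $M$. Since $\codim U = k \le n$, part (iii) of \cref{lem:ss_small_codim} tells us every monomial in $U^\perp$ lies in $\C[x_1,\dots,x_k]_d$, so in particular $M \in \C[x_1,\dots,x_k]_d$. Combined with the hypothesis $\p(M) = k$, which forbids $x_2,\dots,x_{k-1}$ from dividing $M$ but forces $x_k \mid M$, we conclude
\[
M = x_1^a x_k^b, \qquad a+b = d, \qquad b \ge 1.
\]

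Next, I would iteratively apply the reversed strong stability to $M$, each time swapping one factor of $x_k$ for a factor of $x_1$ (valid since $1 < k$). Starting from $M = x_1^a x_k^b$, after $b-1$ such swaps we obtain $x_1^{a+b-1}x_k = x_1^{d-1}x_k \in U^\perp$. Now from $x_1^{d-1}x_k$, reversed strong stability with $i = k$ and any $1 \le j \le k-1$ gives $x_j \cdot x_1^{d-1} = x_1^{d-1}x_j \in U^\perp$; in particular the case $j = 1$ yields $x_1^d \in U^\perp$.

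Putting these together, $U^\perp$ contains the $k$ distinct monomials $x_1^d,\, x_1^{d-1}x_2,\,\dots,\,x_1^{d-1}x_k$. These are linearly independent, and $\dim U^\perp = k$, so they form a basis of $U^\perp$. Therefore $U = \spn(x_1^d, x_1^{d-1}x_2, \dots, x_1^{d-1}x_k)^\perp$, as claimed. There is no real obstacle here; the only thing to watch is invoking \cref{lem:ss_small_codim}(iii) early on to rule out variables $x_{k+1},\dots,x_n$ appearing in $M$, which is what makes the simple form $M = x_1^a x_k^b$ available for the reduction step.
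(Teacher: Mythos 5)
Your proof is correct, and it takes a somewhat different route from the paper's. The paper does not invoke \cref{lem:ss_small_codim}(iii); it first notes that the $k$ monomials $M$ and $x_i M/x_k$ ($i=1,\dots,k-1$) already span $U^\perp$ by the dimension count, so the conclusion is immediate when $M=x_1^{d-1}x_k$, and then derives a contradiction if $M\neq x_1^{d-1}x_k$: writing $M=x_1^a x_k T$ with $1\neq T\in\C[x_k,\dots,x_n]$, it picks $i>k$ with $x_i\mid T$ to produce a $(k+1)$-st monomial $x_1^a x_k^2 T/x_i$ outside $U$. Your version is forward rather than by contradiction: you constrain $M=x_1^a x_k^b$ up front via \cref{lem:ss_small_codim}(iii), run the $x_k\to x_1$ swap down to $x_1^{d-1}x_k$, fan out to the remaining $x_1^{d-1}x_j$, and finish with the same dimension count. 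A side benefit of constraining $M$ first is that it handles the subcase $T=x_k^{b-1}$ with $b\ge 2$ seamlessly; in that subcase there is no $i>k$ with $x_i\mid T$, so the paper's contradiction step as literally written does not apply and one would instead need to iterate the $x_k\to x_1$ swap once more on $x_1^{a+1}x_k^{b-1}$ to find the extra monomial. Both proofs are short, but yours is a bit cleaner for exactly that reason.
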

\begin{proof}
Since $M\notin U$ and $U$ is strongly stable, the monomials $x_{i}\frac{M}{x_k}$ are also not contained in $U$ for $i=1,\dots,k-1$. Hence, $U^\perp$ is spanned by the monomials $x_{i}\frac{M}{x_k}$ ($i=1,\dots,k-1$) and $M$. Especially, if $M=x_1^{d-1}x_k$, then $U$ has the asserted form.

Assume $M\neq x_1^{d-1}x_k$, hence $M=x_1^a x_k T$ for some $a\in\N$ and some monomial $1\neq T\in\C[x_k,\dots,x_n]$ of degree $d-a-1$. Just as above, the monomials $x_1^ax_i T$ are not contained in $U$ for $i=1,\dots,k$. However, since $T\neq 1$, there exists $k<i\le n$ such that $x_i\vert T$, hence $x_1^ax_k^2\frac{T}{x_i}$ is also not contained in $U$. This is a contradiction since $k+1$ different monomials are not contained in $U$.
\end{proof}

\begin{prop}
\label{prop:monomial_lift}
Let $U\subset A_d$ be a strongly stable subspace of codimension $k$ with $2\le k\le n$. Let $S$ be the set of all monomials of degree $d$ not contained in $U$. Then $|\bigcup_{M\in S} M^+| \le \bin{k}{2} + n$
with equality if and only if $U=\spn(x_1^d,x_1^{d-1}x_2,\dots,x_1^{d-1}x_k)^\perp$. Furthermore, every monomial in $S$ is contained in $\bigcup_{M\in S} M^+$.
\end{prop}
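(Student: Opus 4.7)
The proof plan is by induction on $k$, after extracting three clean statements about the sets $M^+$. First, I would observe that the sets $M^+$ (for $M$ ranging over monomials in $A_d$) are pairwise disjoint: if $T\in M^+\cap N^+$ then $M=T^-=N$. Therefore
\[
\Bigl|\bigcup_{M\in S}M^+\Bigr|=\sum_{M\in S}|M^+|,
\]
and by \cref{lem:monomial_reduction} together with the definitions we have $|(x_1^d)^+|=n$, $|M^+|=\p(M)-1$ for $M\in S$ with $x_1\mid M$ and $M\ne x_1^d$, and $|M^+|=0$ otherwise (since $T^-$ is always divisible by $x_1$). The final claim that $S\subset\bigcup_{M\in S}M^+$ follows immediately from strong stability: for any $T\in S$, either $T=x_1^d\in(x_1^d)^+$, or $\p(T)>1$ and the strong stability of $S=U^\perp$ (applied with $i=\p(T)$, $j=1$) yields $T^-=x_1T/x_{\p(T)}\in S$, whence $T\in(T^-)^+$.

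For the numerical bound I would induct on $k\ge 1$. The base case $k=1$ forces $S=\{x_1^d\}$, so the union has size $n=\binom{1}{2}+n$, with $U=\spn(x_1^d)^\perp$ the required canonical subspace. For the step with $k\ge 2$, choose $M^*\in S$ maximal in the partial order "$M\le M'$ iff $M$ is obtained from $M'$ by a sequence of index decreases". I claim $M^*\neq x_1^d$: since $k\ge 2$, iterating $M\mapsto M^-$ from any element of $S\setminus\{x_1^d\}$ produces some $x_1^{d-1}x_j\in S$ with $j\ge 2$ (the last step before reaching $x_1^d$), and such an element lies strictly above $x_1^d$ in the poset. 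Set $S'=S\setminus\{M^*\}$; maximality of $M^*$ guarantees $S'$ remains closed under index decreases, since otherwise some $M\in S'$ with $x_jM/x_i=M^*$ and $j<i$ would lie strictly above $M^*$ in $S$. Hence $S'$ is the monomial orthogonal complement of a strongly stable subspace $U'\subset A_d$ of codimension $k-1\le n$.

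Applying the inductive hypothesis to $U'$, combined with disjointness and the bound $\p(M^*)\le k$ from \cref{lem:ss_small_codim}(iii), gives
\[
\Bigl|\bigcup_{M\in S}M^+\Bigr|=\Bigl|\bigcup_{M\in S'}M^+\Bigr|+|(M^*)^+|\le\Bigl(\binom{k-1}{2}+n\Bigr)+(k-1)=\binom{k}{2}+n.
\]
Equality forces both equality in the inductive step and $|(M^*)^+|=k-1$, i.e.\ $\p(M^*)=k$; the latter alone already implies $U=\spn(x_1^d,x_1^{d-1}x_2,\dots,x_1^{d-1}x_k)^\perp$ by \cref{lem:p_determines_subspace}, and \cref{lem:lex_seg} confirms this subspace attains the bound.

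The main obstacle is verifying that $S'=S\setminus\{M^*\}$ remains strongly stable after removing a maximal element — this is where the strong stability of $U$ is essentially used, and choosing the poset on $S$ to come directly from the closure operation is what makes the reduction step and the equality characterization align cleanly.
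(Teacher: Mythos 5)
Your proof is correct and takes a genuinely different route from the paper. The paper picks the element $T\in S$ with $\p(T)$ maximal, uses strong stability to manufacture the chain $T, x_{\p(T)-1}T/x_{\p(T)},\dots,x_2T/x_{\p(T)}$ of elements with known $\p$-values, throws in $x_1^d$, and bounds the remaining $k-\p(T)$ elements crudely by $\p(M)\le\p(T)$. You instead run a clean induction on $k$: after observing that the fibers $M^+$ of the reduction map partition all monomials, you peel off a single Borel-maximal element $M^*$, note that $S\setminus\{M^*\}$ remains an order ideal (so the smaller complement is still strongly stable of codimension $k-1$), and apply the inductive hypothesis. The bound $|(M^*)^+|\le k-1$ then comes from $\p(M^*)\le k$ via Lemma~\ref{lem:ss_small_codim}(iii) and the case $x_1\nmid M^*$ (where $|(M^*)^+|=0$). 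Your structure makes the disjointness of the $M^+$ explicit upfront, which the paper only invokes for the equality case; the paper avoids the need to extend the statement to $k=1$ but at the price of a messier single-shot estimate. Your equality argument is also slightly leaner: equality forces $|(M^*)^+|=k-1$, which forces $x_1\mid M^*$, $M^*\neq x_1^d$, and $\p(M^*)=k$, and Lemma~\ref{lem:p_determines_subspace} finishes.

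One small slip: you cite Lemma~\ref{lem:lex_seg} to confirm that $U=\spn(x_1^d,\dots,x_1^{d-1}x_k)^\perp$ attains the bound, but that lemma computes $\codim U^2$, not $\bigl|\bigcup_{M\in S}M^+\bigr|$. You should replace it with the direct computation you already have the tools for: by disjointness and Lemma~\ref{lem:monomial_reduction}, $\bigl|\bigcup_{M\in S}M^+\bigr|=|(x_1^d)^+|+\sum_{i=2}^k|(x_1^{d-1}x_i)^+|=n+\sum_{i=2}^k(i-1)=n+\binom{k}{2}$.
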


\begin{rem}
We give a brief idea why \cref{prop:monomial_lift} should hold. Assume we know that equality in \cref{prop:monomial_lift} holds whenever $U=\spn(x_1^d,x_1^{d-1}x_2,\dots,x_1^{d-1}x_k)^\perp$. We swap one monomial in $U^\perp$ with some other monomial in $A_d$. For $U$ to stay strongly stable, we need to remove $x_1^{d-1}x_k$ and add $x_1^{d-2}x_2^2$. By \cref{lem:monomial_reduction} we know that $|(x_1^{d-2}x_2^2)^+|=1$ and $|(x_1^{d-1}x_k)^+|=k-1$, hence the inequality is now strict.

Continuing doing this, we have to remove $x_1^{d-1}x_j$ for some large $j$ and add a monomial $M$ in fewer variables, especially $\p(M)< \p(x_1^{d-1}x_j)=j$ and by \cref{lem:monomial_reduction} the set $\bigcup_{M\in S} M^+$ contains even fewer elements now.
\end{rem}

\begin{proof}[Proof of \cref{prop:monomial_lift}.]
Let $T\in S$ such that $\p(T)=\max\{\p(M)\colon M\in S\}$. Since $U$ is strongly stable, the elements $x_j\frac{T}{x_{\p(T)}}$ for $j=2,\dots,\p(T)-1$ are also not contained in $U$ and therefore lie in $S$. Moreover, they satisfy $\p(x_j\frac{T}{x_{\p(T)}})=j$ for all $j=2,\dots,\p(T)-1$.
Thus we have $\p(T)-1$ elements in $S$ for which we know the value $\p(\cdot)$, namely $T,\,x_j\frac{T}{x_{\p(T)}}$ ($j=2,\dots,\p(T)-1$).

Next, we look at the element $x_1^d\in S$. We have $(x_1^d)^+=\{x_1^d,\dots,x_1^{d-1}x_n\}$, and therefore $\vert (x_1^d)^+ \vert=n$.
In total, we identified $\p(T)$ elements in $S$ for which we know the value $\p(\cdot)$ and all other $k-\p(T)$ elements $M$ in $S$ satisfy $\p(M)\le \p(T)$ by the choice of $T$. 
We write
\[
S'=S\setminus\{x_1^d,x_2\frac{T}{x_{\p(T)}},\dots,x_{\p(T)-1}\frac{T}{x_{\p(T)}},T\}
\]
for the set where we removed all monomials from $S$ of which we determined the value $p(\cdot)$. We calculate
\begin{align*}
\bigg|\bigcup_{M\in S} M^+\bigg| &\le \sum_{i=2}^{\p(T)} \bigg|\left(x_i\frac{T}{x_{\p(T)}} \right)^+\bigg| + \vert (x_1^d)^+ \vert + \bigg|\bigcup_{M\in S'} M^+\bigg|\\
\text{\tiny{(\cref{lem:monomial_reduction})}}&\le \sum_{i=2}^{\p(T)} (i-1) + n + \underbrace{(\vert S\vert - \p(T))}_{=\vert S'\vert} (\p(T)-1)\\
&=\sum_{i=1}^{\p(T)-1} i + (k - \p(T)) (\p(T)-1)+n\\
&=\sum_{i=1}^{\p(T)-1} i + \sum_{j=\p(T)}^{k-1} (\p(T)-1)+n\le \sum_{i=1}^{k-1} i +n=\bin{k}{2}+n.
\end{align*}

Next, we show that equality holds if and only if $U=\spn(x_1^d,x_1^{d-1}x_2,\dots,x_1^{d-1}x_k)^\perp$.
First, we note that $\p(T)=k$ implies $S=\{x_1^d,x_1^{d-1}x_2,\dots,x_1^{d-1}x_k\}$ by \cref{lem:p_determines_subspace}.

Assume $U$ is not of this form, then $\p(T)<k$ and therefore the sum $\sum_{j=\p(T)}^{k-1} (\p(T)-1)$ is non-zero. Moreover, we have a strict inequality 
$\sum_{j=\p(T)}^{k-1} (\p(T)-1) < \sum_{j=\p(T)}^{k-1} j$
which means the second to last inequality above is also strict in this case. 

Therefore, equality can only hold if $U=\spn(x_1^d,x_1^{d-1}x_2,\dots,x_1^{d-1}x_k)^\perp$.
Assume we are in this case, then $\p(T)=k$. First, we note that this means that $S'=\emptyset$, hence we get
\[
\bigg|\bigcup_{M\in S} M^+\bigg| \le \sum_{i=2}^{\p(T)} \bigg|\left(x_i\frac{T}{x_{\p(T)}} \right)^+\bigg| + \vert (x_1^d)^+ \vert \stackrel{\text{\tiny{(\cref{lem:monomial_reduction})}}}{=} \sum_{i=2}^{\p(T)} (i-1) + n=\sum_{i=1}^{k-1} i +n= \bin{k}{2}+n.
\]
We thus need to show that the inequality is an equality or equivalently that the sets $M^+$ with $M\in S$ are disjoint.

Let $x_1^{d-1}x_i\in S,\, i\ge 2$, then $(x_1^{d-1}x_i)^+=\{x_1^{d-2}x_ix_l\colon 2\le l\le i\}$. 
We see that if $(x_1^{d-1}x_i)^+$ and $(x_1^{d-1}x_j)^+,\, i,j\ge 2$ contain a common element, it has the form $x_1^{d-2}x_ix_j$ with $i\le j$ and $j\le i$, which means $i=j$. 
For $x_1^d$ we have $(x_1^d)^+=\{x_1^d,\dots,x_1^{d-1}x_n\}$ and every element has degree at least $d-1$ in $x_1$, hence it is not contained in $(x_1^{d-1}x_i)^+$ for any $i\ge 2$. This finishes the first part.

The last claim we need to prove is $S\subset \bigcup_{M\in S} M^+$. Let $T\in S$. Since $U$ is strongly stable, the element $x_1\frac{T}{x_{\p(T)}}=T^-$ is also contained in $S$. But by definition this means 
\[
T\in \left(x_1\frac{T}{x_{\p(T)}}\right)^+\subset \bigcup_{M\in S} M^+
\]
which finishes the proof.
\end{proof}

\begin{lem}
\label{lem:bigger_ss_subspace}
Let $U\subset A_d$ be a strongly stable subspace of codimension $k+1$, then there exists a strongly stable subspace $V\subset A_d$ of codimension $k$ such that $U\subset V$.
\end{lem}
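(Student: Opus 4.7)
The plan is to remove a single carefully chosen monomial from the complement. Since $U$ is strongly stable, $U$ is a monomial subspace, so $U^\perp$ is spanned (under the apolarity pairing) by the set $\mathcal{M}$ of monomials of degree $d$ not lying in $U$. Since $\codim U=k+1\ge 1$, the set $\mathcal{M}$ is nonempty. Let $M$ be the lex-largest element of $\mathcal{M}$, where we use the lex order with $x_1<x_2<\dots<x_n$ fixed throughout the paper. I propose to define
\[
V := U\oplus\spn(M),
\]
which is a monomial subspace of codimension $k$ containing $U$.

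The only thing to verify is that $V$ is strongly stable. Let $T\in V$ be a monomial with $x_i\mid T$, and let $j>i$. We must show $x_j\frac{T}{x_i}\in V$. If $T\in U$, this is immediate from the strong stability of $U$. Otherwise $T=M$; in this case the monomial $x_j\frac{M}{x_i}$ is obtained from $M$ by replacing one factor $x_i$ by $x_j$ with $j>i$, and so it is strictly lex-larger than $M$ in the convention $x_1<\dots<x_n$. By maximality of $M$ in $\mathcal{M}$, every monomial of degree $d$ which is lex-larger than $M$ belongs to $U$, so $x_j\frac{M}{x_i}\in U\subset V$, as needed.

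The argument is essentially a one-line observation once the right monomial is identified, so there is no real obstacle. The only conceptual point is selecting $M$ so that all upward moves from $M$ land inside the already-strongly-stable piece $U$; taking the lex-maximum of the complement accomplishes exactly this, since the strong-stability operation is order-increasing in this lex order.
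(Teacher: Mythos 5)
Your proof is correct and is essentially the paper's argument presented more cleanly: both proofs add to $U$ a single monomial from the complement whose strong-stability shifts $x_j\frac{M}{x_i}$ (for $j>i$) all land back in $U$. The paper locates such a monomial by iterating upward from an arbitrary starting monomial and invoking termination of the process, while you observe directly that the lex-maximum of the complement works, which also subsumes the paper's separate treatment of the case $U=\{0\}$.
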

\begin{proof}
If $U=\{0\}$, take $V=\spn(x_n^d)$. Now assume $U\neq \{0\}$.
Let $M$ be any monomial not contained in $U$, and let $I:=\{i\in\N\colon x_i\vert M\}\setminus\{x_n\}$. Then either $x_{i+1}\frac{M}{x_i}\in U$ for every $i\in I$ or there exists $j\in I$ such that $x_{j+1}\frac{M}{x_j}\notin U$. 
If all of them are contained in $U$, we may add $M$ to $U$ and the subspace $V: = U\oplus\spn(M)$ is still strongly stable. Indeed, let $i\in I$ and $i<l\le n$, then
\[
x_l\frac{M}{x_i}=\frac{x_l}{x_{i+1}}\underbrace{\left(x_{i+1}\frac{M}{x_i}\right)}_{\in U}\in U.
\]
Any other monomial in $V$ except for $M$ is already contained in $U$ and $U$ is strongly stable.

If $x_{j+1}\frac{M}{x_j}$ is not contained in $U$ for some $j\in I$, then for any monomial ordering $\succeq$ we have $x_{j+1}\frac{M}{x_j}\succeq M$. 
Now we continue with $x_{j+1}\frac{M}{x_j}$ instead of $M$. After finitely many steps we find $M$ such that $x_{i+1}\frac{M}{x_i}\in U$ for every $i\in I$, since we reach $x_{n-1}x_n^{d-1}$ and this monomial is only divisible by $x_{n-1}$ and $x_n$ and 
$x_n\frac{x_{n-1}x_n^{d-1}}{x_{n-1}}=x_n^d\in U\neq\{0\}$.
\end{proof}

\begin{prop}
\label{prop:subspace_monomial_reduction}
For every $k\ge 2$ we have $m(k,k+1,k+1) < m(k,k,k)+\bin{k+1}{2}$.
\end{prop}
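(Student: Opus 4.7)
The plan is to take $U \subset A(k)_{k+1}$ a strongly stable subspace of codimension $k+1$ realizing $m(k,k+1,k+1)$ (which exists by \cref{prop:ss_is_minimal}), and then to extend $U$ to a strongly stable $V \supset U$ of codimension $k$ via \cref{lem:bigger_ss_subspace}. By \cref{cor:ss_bound_independent_of_d}, $\codim V^2 \le m(k,k+1,k) = m(k,k,k)$. Since $V\setminus U$ consists of a single monomial $M'$, one may write $V = U \oplus \spn(M')$, so $V^2 = U^2 + M'V$. All of $U^2, V^2, M'V$ are monomial, hence $\dim(V^2/U^2)$ counts the monomials $T \in V$ with $M'T \notin U^2$; call such $T$ \emph{bad}. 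It therefore suffices to show that the number of bad $T$ is at most $\binom{k+1}{2}-1$.

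The key step --- and the expected main obstacle --- is the decomposition claim: \emph{if $T \in V$ satisfies $T^- \in V$ and $T^- \ne M'$, then $M'T \in U^2$.} First I will observe that $x_1^{k+1} \in S_V$ (since $S_V$ is nonempty and downward-stable), so $M' \ne x_1^{k+1}$ and $T \ne x_1^{k+1}$; consequently $\p(M'), \p(T) \ge 2$. Downward-stability of $S_U = S_V \cup \{M'\}$ forces $M'^- \in S_V$, and \cref{lem:ss_small_codim}(ii) applied to $V$ (codimension $k$ in degree $k+1$, so $s=2$) gives $x_1^2 \mid M'^-$, hence $x_1 \mid M'$. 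Then I will set
\[
A := x_{\p(T)} M' / x_1, \qquad B := T^- = x_1 T / x_{\p(T)},
\]
and verify: strong stability of $V$ (moving $x_1 \to x_{\p(T)}$ in $M' \in V$) gives $A \in V$, while $B \in V$ by hypothesis. Since $\p(T) \ne 1$ we have $A \ne M'$, and $B \ne M'$ by assumption, so $A, B \in U$. The identity $AB = M'T$ then places $M'T$ in $U^2$.

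Granted the claim, a bad $T$ must satisfy $T^- \in S_V$ or $T^- = M'$, so it lies in $B_V \cup ((M')^+ \cap V)$, where $B_V := \{T \in V : T^- \in S_V\}$. Since $\bigcup_{M \in S_V} M^+ = S_V \sqcup B_V$ (the inclusion $S_V \subset \bigcup M^+$ is the last clause of \cref{prop:monomial_lift}, and the sets are disjoint because $V \cap S_V = \emptyset$), \cref{prop:monomial_lift} with $n=k$ yields $|B_V| \le \binom{k}{2}$. By \cref{lem:monomial_reduction}, $|(M')^+| = \p(M') - 1 \le k-1$, and $B_V \cap (M')^+ = \emptyset$ (elements of $B_V$ have $T^- \in S_V$, while elements of $(M')^+$ have $T^- = M' \in V$). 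Combining,
\[
\dim(V^2/U^2) \le |B_V| + |(M')^+ \cap V| \le \binom{k}{2} + (k-1) = \binom{k+1}{2} - 1,
\]
so $\codim U^2 \le m(k,k,k) + \binom{k+1}{2} - 1 < m(k,k,k) + \binom{k+1}{2}$. The hard part is genuinely the decomposition claim: everything hinges on choosing $A = x_{\p(T)}M'/x_1$ and $B = T^-$ so that strong stability together with $x_1 \mid M'$ forces both factors into $U$ rather than into $\{M'\}$ or $S_V$. Once this is in place, the count is immediate from \cref{prop:monomial_lift} and \cref{lem:monomial_reduction}.
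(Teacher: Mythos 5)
Your proof is correct, and it rests on the same key decomposition $M'T=(x_{\p(T)}M'/x_1)\cdot T^-$ that the paper uses; your preliminary observations ($\p(M'),\p(T)\ge 2$ and $x_1\mid M'$, via downward-stability of $S_U$ and \cref{lem:ss_small_codim}) carefully supply justifications the paper leaves implicit. Where you genuinely diverge is in the counting. The paper bounds the bad set in one stroke by applying \cref{prop:monomial_lift} to $U$ itself, which has codimension $k+1$ in $n=k$ variables; that is outside the stated hypothesis $2\le\codim\le n$ of the proposition (the proof of \cref{prop:monomial_lift} does still yield strict inequality in that range, since then $\p(T)\le n<\codim U$ forces the equality case to be vacuous, but the paper invokes the result without comment on this). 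You instead split the bad set as $B_V\sqcup\big((M')^+\cap V\big)$ and bound the pieces separately: \cref{prop:monomial_lift} applied to $V$ (codimension $k=n$, within its hypotheses) controls $B_V$ by $\binom{k}{2}$, and \cref{lem:monomial_reduction} together with $\p(M')\le n=k$ controls $(M')^+$ by $k-1$, which is where your strictness comes from. The total $\binom{k+1}{2}-1$ agrees with the paper's, but every cited result is used within its stated hypotheses, which is arguably the cleaner route.
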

\begin{proof}
It is enough to show $m(k,k+1,k+1) < m(k,k+1,k)+\bin{k+1}{2}$ since by \cref{cor:ss_bound_independent_of_d} we have $m(k,k+1,k) = m(k,k,k)$. 
Again we denote the degree by $d: = k+1$. Let $U\subset A(k)_d$ be a strongly stable subspace of codimension $k+1$ realizing $m(k,d,k+1)$ and let $V\subset A(k)_d$ be any strongly stable subspace of codimension $k$ containing $U$. Such a subspace $V$ always exists by \cref{lem:bigger_ss_subspace}.

Now we compare $U^2$ and $V^2$. We can write $V=U\oplus\spn(M)$ for some monomial $M\in A(k)_d$ and hence $V^2=U^2+\spn(M)V$.

Let $MT\in\spn(M)V$ for some monomial $T\in V$. We claim that 
\[
MT=\left(x_{\p(T)}\frac{M}{x_1}\right)T^-
\]
is contained in $U^2$ except for at most $\bin{k+1}{2}-1$ choices of $T$. 
We note that the monomial $x_{\p(T)}\frac{M}{x_1}$ is always contained in $U$ since $U$ is strongly stable. After we show this, we are done as follows:
\[
\dim V^2\le \dim U^2 +\bin{k+1}{2}-1,\quad \text{ or equivalently }\quad \codim V^2\ge \codim U^2 - \bin{k+1}{2}+1
\]
hence
\[
m(k,k+1,k+1)=\codim U^2\le \codim V^2+\bin{k+1}{2}-1\le m(k,k+1,k)+\bin{k+1}{2}-1.
\]
Whenever $T^-$ is not contained in $U$ this means that $T$ is contained in $M^+$ for some monomial $M\in A(k)_d\setminus U$. 
The subspace $U$ has codimension $k+1$ and is contained in $A(k)_d$. Therefore, it cannot be the orthogonal complement of $\spn(x_1^d,x_1^{d-1}x_2,\dots,x_1^{d-1}x_{k+1})$ in $A(k+1)_d$ as the variable $x_{k+1}$ appears. 

Let $S$ be the set of all monomials in $A(k)_d$ that are not contained in $U$. It follows from \cref{prop:monomial_lift} that $\vert\bigcup_{M\in S} M^+\vert < \bin{k+1}{2} + k$. This means that there are at most $\bin{k+1}{2} + k -1$ monomials $M\in A(k)_d$ such that $M^-\notin U$.

Since $U\subset V$ and $V$ is strongly stable, all monomials not contained in $V$ are also not contained $U$, hence are in $S$ and therefore also in $\bigcup_{M\in S} M^+$ by \cref{prop:monomial_lift}.

We can now finish the proof: Let $T\in V$ such that $T^-\notin U$. This means 
\[
T\in \left(\bigcup_{M\in S} M^+\right) \cap V=\left(\bigcup_{M\in S} M^+\right)\setminus V^\perp
\]
and we just showed that the set on the right-hand side has cardinality at most 
$(\bin{k+1}{2}+k-1)-k=\bin{k+1}{2}-1.$
\end{proof}

\begin{thm}
\label{thm:main_bound}
Let $k\le d-1$. Then for every $n\ge 2$ and every \bpf\ subspace $U\subset A(n)_d$ of codimension $k$ we have 
\[
\codim U^2\le k^2+\bin{k+2}{3} = \frac{1}{6}(k^3+9k^2+2k).
\]
\end{thm}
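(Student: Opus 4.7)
The plan is to assemble the bound from three ingredients that have been prepared in the previous sections. Given a base-point-free subspace $U \subset A(n)_d$ of codimension $k$ with $k \le d-1$, my first step would be to invoke \cref{thm:independent_bound} to strip away the dependence on both $n$ and $d$, giving $\codim U^2 \le m(2k, k, k)$. That theorem is exactly what has been engineered to eliminate the $n$- and $d$-dependence: it uses the variable-reduction \cref{cor:reduction_number_of_variables} together with the preservation of the base-point-free property under generic hyperplane restriction (\cref{prop:stay_bpf}) and the degree-stabilization \cref{cor:ss_bound_independent_of_d}.

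The second step is to bound $m(2k, k, k)$ by a quantity involving only $k$ variables, via \cref{lem:2k_le_k}, which yields $m(2k, k, k) \le k^2 + m(k, k, k)$. This relies on the observation that a strongly stable subspace of codimension $k$ in $A(2k)_k$ forces all missing monomials into $A(k)_k$, together with the Macaulay growth estimate in \cref{cor:macaulay_gotzmann} contributing the $k^2$ term (through the factor $(2k - k)\cdot k$ arising from \cref{thm:reduction_number_of_variables}).

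Third, I would apply \cref{thm:m-k-k-k}, which identifies $m(k, k, k) = \binom{k+2}{3}$, realized uniquely by the strongly stable subspace $\spn(x_1^k, x_1^{k-1}x_2, \dots, x_1^{k-1}x_k)^\perp$. This is the combinatorial heart of the argument, and the step I would anticipate as the main obstacle, since it demands the careful monomial analysis of \cref{prop:monomial_lift} and \cref{prop:subspace_monomial_reduction} to rule out every other strongly stable extremizer. Once that is in hand, everything else is formal.

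Chaining the three inequalities gives
\[
\codim U^2 \;\le\; m(2k,k,k) \;\le\; k^2 + m(k,k,k) \;=\; k^2 + \binom{k+2}{3},
\]
and the closed-form expression follows from the routine computation
\[
k^2 + \binom{k+2}{3} \;=\; k^2 + \frac{k(k+1)(k+2)}{6} \;=\; \frac{6k^2 + k(k+1)(k+2)}{6} \;=\; \frac{k^3 + 9k^2 + 2k}{6}.
\]
Since each ingredient has been established, no essentially new idea is needed at this final stage; the theorem is the capstone that packages the earlier reductions.
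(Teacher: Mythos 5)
Your proposal is exactly the paper's proof: chain \cref{thm:independent_bound}, \cref{lem:2k_le_k}, and \cref{thm:m-k-k-k} in the same order, and the arithmetic check at the end is also correct. No further comments are needed.
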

\begin{proof}
From \cref{thm:independent_bound} we know $\codim U^2\le m(2k,k,k)$. From \cref{lem:2k_le_k} we see $m(2k,k,k)\le k^2+m(k,k,k)$ and \cref{thm:m-k-k-k} shows $m(k,k,k)=\bin{k+2}{3}$.
\end{proof}

\begin{thm}
Let $k\ge 1$ and let $n,d\ge k$. Then $m(n,d,k)=\bin{k+2}{3}+(n-k)k$. Furthermore the only strongly stable subspace $U\subset A_d$ realizing $m(n,d,k)$ is given by 
\[
U=\spn(x_1^d,x_1^{d-1}x_2,\dots,x_1^{d-1}x_k)^\perp\subset A_d.
\]
\end{thm}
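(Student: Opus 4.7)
My plan is built around two reductions. First, by \cref{cor:ss_bound_independent_of_d} we have $m(n,d,k)=m(n,k,k)$ whenever $d\ge k$, so it suffices to compute the value at $d=k$. For the uniqueness part, I will then induct on $d$ to lift the result from $d=k$ to arbitrary $d\ge k$: given a strongly stable realizer $U\subset A(n)_d$ with $d>k$, set $V:=(U:x_1)$. The key observation is \cref{lem:ss_small_codim}(ii), which says every monomial of $U^\perp$ is divisible by $x_1^{d-k+1}\ge x_1^2$; this makes the non-generic choice $l=x_1$ behave like a generic linear form in the proofs of \cref{thm:deg_reduction} and \cref{prop:deg_reduction_equality_ss}, so both bounds become equalities and $V$ is a strongly stable realizer of $m(n,d-1,k)$. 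The inductive hypothesis then gives $V=\spn(x_1^{d-1},x_1^{d-2}x_2,\dots,x_1^{d-2}x_k)^\perp$, and a dimension count together with the inclusion $\C[x_2,\dots,x_n]_d\subset U$ (also from \cref{lem:ss_small_codim}(ii)) yields the explicit recovery $U=x_1V\oplus\C[x_2,\dots,x_n]_d$, whose orthogonal complement is the claimed subspace.

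Next I would treat the base case $d=k$. Let $U\subset A(n)_k$ be strongly stable of codimension $k$. By \cref{lem:ss_small_codim}(iii), $U^\perp\subset A(k)_k$, so $V:=U\cap A(k)_k$ is a strongly stable codimension-$k$ subspace of $A(k)_k$ with $U^\perp=V^\perp$ when viewed in $A(k)_k$. Applying \cref{thm:reduction_number_of_variables} with $m=k$ yields
\[
\codim U^2 \;\le\; (n-k)\,\codim\bigl(V\cdot A(k)_{k-1}\bigr) + \codim V^2.
\]
By \cref{rem:HF_ss_subspace}, $\codim V\cdot A(k)_{k-1}=k$, and \cref{thm:m-k-k-k} gives $\codim V^2\le m(k,k,k)=\binom{k+2}{3}$, so $m(n,k,k)\le (n-k)k+\binom{k+2}{3}$.

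For tightness, I will exhibit the candidate $U_0:=\spn(x_1^k,x_1^{k-1}x_2,\dots,x_1^{k-1}x_k)^\perp\subset A(n)_k$. A direct check shows $U_0$ is strongly stable and decomposes as $U_0=\langle x_{k+1},\dots,x_n\rangle A(n)_{k-1}\oplus V_0$ with $V_0:=\spn(x_1^k,x_1^{k-1}x_2,\dots,x_1^{k-1}x_k)^\perp\subset A(k)_k$. By \cref{rem:variable_reduction_tight} the bound above is attained for $U_0$, and combining with \cref{lem:lex_seg} (which gives $\codim V_0^2=\binom{k+2}{3}$) yields $\codim U_0^2=(n-k)k+\binom{k+2}{3}$. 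For uniqueness at $d=k$, equality in the chain forces $\codim V^2=m(k,k,k)$, and the proof of \cref{thm:m-k-k-k} shows that $V_0$ is the only strongly stable realizer of $m(k,k,k)$ in $A(k)_k$; hence $U^\perp=V^\perp=V_0^\perp$ and $U=U_0$.

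The main obstacle will be the inductive step for uniqueness, where I have to check carefully that the proofs of \cref{thm:deg_reduction} and \cref{prop:deg_reduction_equality_ss} transfer to the strongly stable setting with the explicit linear form $l=x_1$, producing equalities rather than mere inequalities, and then reconstruct $U$ from $V$. The reconstruction step again relies critically on \cref{lem:ss_small_codim}(ii), which forces $U^\perp\subset x_1^{d-k+1}A_{d-1}$ and thus puts $\C[x_2,\dots,x_n]_d$ inside $U$.
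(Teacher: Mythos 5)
Your proof is correct and uses the same ingredients as the paper (reduction of variables via \cref{thm:reduction_number_of_variables} and \cref{lem:ss_small_codim}, degree reduction via \cref{thm:deg_reduction} and \cref{prop:deg_reduction_equality_ss}, and the computation of $m(k,k,k)$ in \cref{thm:m-k-k-k}); the only organizational difference is in the uniqueness argument, where you do a one-step degree induction by coloning with $x_1$ and reduce to $k$ variables only at the base case $d=k$, while the paper intersects $U$ with $A(k)_d$ first and then colons by $x_1^{d-k}$ in a single jump. Your observation that $l=x_1$ acts as a generic linear form for a strongly stable subspace of codimension $k\le d-1$, because \cref{lem:ss_small_codim}(ii) forces $U^\perp\subset x_1^2A_{d-2}$, is precisely what the paper uses tacitly when it cites ``the last part of the proof of \cref{thm:deg_reduction},'' so your version spells out that point more carefully. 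One small gap: \cref{thm:m-k-k-k} is stated only for $k\ge 2$, so you must dispose of $k=1$ separately as the paper does (there is a unique strongly stable subspace of codimension $1$, and \cref{lem:basepoint} gives $\codim U^2=n=\binom{3}{3}+(n-1)$).
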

\begin{proof}
For $k=1$, this follows from \cref{lem:basepoint} and the fact that there only exists a single strongly stable subspace of codimension 1. For $k\ge 2$ we calculate
\begin{align*}
m(n,d,k)&  \overset{(\ref{thm:reduction_number_of_variables})}{=} m(k,d,k)+(n-k)k\overset{(\ref{cor:ss_bound_independent_of_d})}{=} m(k,k,k)+(n-k)k\\
&\overset{(\ref{thm:m-k-k-k})}{=} \bin{k+2}{3}+(n-k)k = \frac{1}{6}(k^3-3k^2+2k)+nk.
\end{align*}
For the first equality, we also use \cref{rem:variable_reduction_tight} and the fact that the Hilbert function of the ideal generated by the subspace $U=\spn(x_1^k,x_1^{k-1}x_2,\dots,x_1^{k-1}x_k)^\perp\subset A(k)_k$ is $k$ for any degree at least $k$: from \cref{cor:macaulay_gotzmann} we see that the Hilbert function is at most $k$ in those degrees, but we immediately check that $x_1^d,x_1^{d-1}x_2,\dots,x_1^{d-1}x_k$ are not contained in $UA_{d-k}$.

Let $U\subset A_d$ be a strongly stable subspace of codimension $k$ realizing $m(n,d,k)$. Every monomial not contained in $U$ is contained in $A(k)_d$, hence $\codim V=k$ where $V:=U\cap A(k)_d$. Now we have
\begin{align*}
m(k,k,k)+(n-k)k& \overset{(\ref{thm:m-k-k-k})}{=}m(n,d,k)=\codim U^2\overset{(\ref{thm:reduction_number_of_variables})}{\le} \codim V^2 + (n-k)k \\
&\overset{\phantom{(3.5.5)}}{\le} m(k,d,k)+(n-k)k\overset{(\ref{cor:ss_bound_independent_of_d})}{=}m(k,k,k)+(n-k)k,
\end{align*}
and therefore both inequalities are equalities and we have $\codim V^2=m(k,k,k)$. We note that $V\subset A(k)_d$, and $d$ might still be larger than $k$. 
By the last part of the proof of \cref{thm:deg_reduction} we have $m(k,k,k)=\codim V^2\le \codim (V:x_1^{d-k})^2 \le m(k,k,k)$. Especially, $(V:x_1^{d-k})\subset A(k)_k$ is a subspace of codimension $k$ realizing $m(k,k,k)$ and thus it is equal to 
$\spn(x_1^k,\dots,x_1^{k-1}x_k)^\perp\subset A(k)_k$. 
This shows $V^\perp =x_1^{d-k}\spn(x_1^k,\dots,x_1^{k-1}x_k)\subset A(k)_d$ and thus also $U=\spn(x_1^d,\dots,x_1^{d-1}x_k)^\perp\subset A_d$.
\end{proof}

\section{Application to Gram spectrahedra}
\label{sec:gram_spec}

After showing results about squares of subspaces in the last sections, we now want to interpret their meaning for \gsa.

Let $C\subset\R^n$ be a convex set and let $F\subset C$ also be convex. The set $F$ is called a \todfn{face} of $C$ if $x,y\in C$, $x+y\in F$ already implies $x,y\in F$. There are two trivial faces, the empty set and the whole set $C$. Every other face is part of the (euclidean) boundary of $C$.
The dimension of a convex set $C$, $\dim C$, is defined as the dimension of the smallest affine linear subspace containing $C$.

Firstly, we can use our results about subspaces of codimension 1 to show an upper bound for the dimension of faces of \gsa. We write $\Sigma_{n,2d}\subset\Rx_{2d}$ for the set of all sums of squares, i.e. all $f\in\Rx_{2d}$ such that there exist $f_1,\dots,f_r\in\Rx_d$ with $f=\sum_{i=1}^r f_i^2$.

\begin{prop}
Let $f\in\Sigma_{n,2d}$, and $F\subset\gram(f)$ a face. Let $\theta$ be a relative interior point of $F$ and $U = \im\theta$. If $U\neq \Rx_d$, then 
\[
\dim F\le \bin{\dim \Rx_d}{2}-\dim \Rx_{2d}+n.
\]
\end{prop}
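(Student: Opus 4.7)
By the dimension formula \eqref{eq:dim_face},
\[
\dim F = \binom{\dim U + 1}{2} - \dim U^2.
\]
Writing $N := \dim \Rx_d$ and using $\dim U^2 = \dim \Rx_{2d} - \codim U^2$, the asserted inequality is equivalent to
\[
\codim U^2 \;\le\; n + \binom{N}{2} - \binom{\dim U + 1}{2}.
\]
I plan to establish this by induction on the codimension $k := \codim U \ge 1$.

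For the base case $k = 1$, we have $\dim U = N - 1$, so the right-hand side reduces to $n$ and the claim becomes $\codim U^2 \le n$. This is exactly the content of the codimension 1 classification proven earlier: \cref{lem:basepoint} yields $\codim U^2 = n$ when $U$ has a base point, while \cref{prop:codim1possibledim} yields $\codim U^2 \le 2 \le n$ when $U$ is base-point-free (using $n \ge 2$).

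For the inductive step $k \ge 2$, choose any $u \in \Rx_d \setminus U$ and set $U' := U \oplus \langle u\rangle$, which has codimension $k-1 \ge 1$ and is still a proper subspace of $\Rx_d$. Expanding
\[
(U')^2 \;=\; U^2 + u\cdot U + \langle u^2\rangle,
\]
and noting that multiplication by the nonzero form $u$ is injective on $\Rx_d$, we obtain $\dim (U')^2 \le \dim U^2 + \dim U + 1$, equivalently
\[
\codim U^2 \;\le\; \codim (U')^2 + \dim U + 1.
\]
The induction hypothesis bounds $\codim (U')^2$ by $n + \binom{N}{2} - \binom{\dim U + 2}{2}$, and the Pascal-type identity $\binom{\dim U + 2}{2} - \binom{\dim U + 1}{2} = \dim U + 1$ delivers the desired inequality for $U$. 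Substituting back through \eqref{eq:dim_face} then gives the stated bound on $\dim F$.

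The only substantive input is the codimension 1 classification already at hand; the inductive extension to arbitrary codimension is pure linear algebra, so I do not expect any real obstacle beyond the bookkeeping of the binomial identity above.
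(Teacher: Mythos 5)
Your proof is correct, and it takes a genuinely different route from the paper. The paper's proof invokes \cref{prop:ss_is_minimal} to trade $U$ for a strongly stable subspace $V$ of the same dimension with $\codim V^2 \ge \codim U^2$, and then uses the fact that every strongly stable subspace of positive codimension has $(1:0:\dots:0)$ as a base point to pin down $\dim V^2$. Your approach avoids strong stability and generic initial ideals entirely: you run a direct induction on $\codim U$, adding one form $u$ at a time and observing that the passage $U\mapsto U\oplus\langle u\rangle$ increases $\dim(\cdot)^2$ by at most $\dim U + 1 = \binom{\dim U + 2}{2}-\binom{\dim U+1}{2}$, so the binomial term and the $\dim(\cdot)^2$ term change in lock-step and the whole proposition collapses to the codimension $1$ bound $\codim U^2\le n$ (which is \cref{lem:basepoint} together with \cref{prop:codim1possibledim}). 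The inductive step is pure linear algebra, using only that multiplication by a nonzero form is injective in a polynomial ring, and it isolates very cleanly what the single essential input is. One small point worth making explicit in a write-up: the proposition concerns real subspaces of $\Rx_d$, while the cited codimension $1$ results are phrased over $\C$; this is harmless because complexification commutes with forming $U^2$, so the codimensions involved agree, but the transfer deserves a word.
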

\begin{proof}
By \cref{prop:ss_is_minimal} there exists a strongly stable subspace $V\subset \Rx_d$ such that $\codim U^2 \le \codim V^2$ and $\dim U=\dim V$. Since $V$ is strongly stable, $V$ has $(1:0:\dots:0)$ as a \bp.
\[
\dim F\stackrel{\cref{eq:dim_face}}{=}\bin{\dim(U)+1}{2}-\dim (U^2)\le \bin{\dim \Rx_d}{2}-\dim(V^2)=\bin{\dim \Rx_d}{2}-(\dim \Rx_d-n).
\]
\end{proof}

\begin{rem}
In fact, one can show that equality in the last proposition holds if and only if $f\in\partial P_{n,2d}$ is on the boundary of the cone of non-negative polynomials with exactly one real zero and $F=\gram(f)$ or if $n=2$ and $\codim U=1$.
\end{rem}

We now continue discussing the bound $m(n,d,k)$ and what we showed in the last sections. First, we get the following from \cref{thm:main_bound}.

\begin{thm}
\label{thm:main_bound_gsa}
Let $f\in\Sigma_{n,2d}$ be \nonsing. If $F\subset\gram(f)$ is a face of corank $k$ with $1\le k\le d-1$, then
\[
\dim F\le \bin{\dim \Rx_d-k+1}{2}-\dim\Rx_{2d}+k^2+\bin{k+2}{3}.
\]
\end{thm}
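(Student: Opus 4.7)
The plan is to unpack the definitions and reduce directly to the main structural theorem (\cref{thm:main_bound}). Given a face $F\subset\gram(f)$ of corank $k$, pick any relative interior point $\theta$ of $F$ and let $U=\im\theta\subset\Rx_d$. Because $\theta$ has rank $\dim\Rx_d-k$ by the definition of corank, we have $\dim U=\dim\Rx_d-k$, i.e.\ $\codim U=k$. By the formula \eqref{eq:dim_face} recalled in the introduction,
\[
\dim F=\bin{\dim U+1}{2}-\dim U^2=\bin{\dim\Rx_d-k+1}{2}-\dim U^2,
\]
so it is enough to bound $\dim U^2$ from below, equivalently to bound $\codim U^2$ from above.

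Next I would verify that $U$ is base-point-free so that \cref{thm:main_bound} applies. If $U$ had a base point $\xi\in\P^{n-1}$, then every summand in a presentation $f=\sum_i f_i^2$ with $f_i$ running through a basis of $U$ would vanish at $\xi$; then $f$ would vanish to order $\ge 2$ at $\xi$, contradicting non-singularity of $f$. This is precisely the observation the author isolates in \cref{sec:gram_spec}. Note that the complexification $U\otimes_\R\C$ has the same codimension and the same property (a complex base point would give a complex singular point of $f$), so \cref{thm:main_bound}, stated over $\C$, may be applied to it, and $\codim_\R U^2=\codim_\C(U\otimes\C)^2$.

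Applying \cref{thm:main_bound} in codimension $k\le d-1$ yields
\[
\codim U^2\le k^2+\bin{k+2}{3},
\]
so $\dim U^2\ge \dim\Rx_{2d}-k^2-\bin{k+2}{3}$. Substituting this into the expression for $\dim F$ gives
\[
\dim F\le\bin{\dim\Rx_d-k+1}{2}-\dim\Rx_{2d}+k^2+\bin{k+2}{3},
\]
which is the desired inequality. The entire argument is a direct consequence of the two facts established earlier---the face-dimension formula \eqref{eq:dim_face} and \cref{thm:main_bound}---together with the translation between non-singularity of $f$ and the base-point-free condition on $U$; there is no genuine obstacle beyond making sure the corank convention matches $\codim U=k$ and that one may work with the complexification when invoking \cref{thm:main_bound}.
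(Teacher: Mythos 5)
Your proof is correct and is essentially the argument the paper intends: the paper states this theorem as an immediate consequence of \cref{thm:main_bound} together with the face-dimension formula \eqref{eq:dim_face} and the base-point-free translation of non-singularity explained in the introduction. You have simply written out the details (including the minor but real point about complexification) that the paper leaves implicit.
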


Next, we compare this bound to the bound we get from $m(n,d,k)$. I.e. we compare \gsa\ of singular forms to \gsa\ of \nonsing\ forms. We always assume $1\le k\le d-1$.

\begin{rem}[singular form]
Let $U\subset\Rx_d$ be a strongly stable subspace of codimension $k$ realizing $m(n,d,k)$ and let $f\in\interior \Sigma U^2\subset U^2$. Since $U$ has a real \bp, the form $f$ lies on the boundary of the psd cone $P_{n,2d}$. The \gs\ $\gram(f)$ has a face corresponding to the subspace $U$ since $f\in\interior\Sigma U^2$. Especially, this face $F$ has dimension 
\begin{align*}
\dim F&=\bin{\dim \Rx_d-k+1}{2}-\dim \Rx_{2d} + m(n,d,k)\\
{\scriptstyle(\cref{rem:asymptotic_behavior})}&\ge\bin{\dim \Rx_d-k+1}{2}-\dim \Rx_{2d}+ kn.
\end{align*}
\end{rem}

\begin{rem}[\nonsing\ form]
Let $f\in\Sigma_{n,2d}$ be a \nonsing\ form and let $F\subset\gram(f)$ be a face with corresponding subspace $U$ of codimension $k$. Using \cref{thm:main_bound_gsa} we have an upper bound for the dimension of $F$,
\[
\dim F =\bin{\dim \Rx_d-k+1}{2}-\dim \Rx_{2d} + \codim U^2\le \bin{\dim \Rx_d-k+1}{2}-\dim \Rx_{2d} + k^2+\bin{k+2}{3}.
\]
We note that although the upper bound for the codimension of $U^2$ is independent of $n$, the dimension of $F$ certainly is not.
\end{rem}

\begin{rem}
We see that for large enough $n$, the dimensional differences between faces of \gsa\ of singular and \nonsing\ forms are arbitrarily large.

Moreover for $n$ large and any $k < d$, \emph{every} face of corank $k$ of the \gs\ of the singular form has a higher dimension than any face of corank $k$ of the \gs\ of the \nonsing\ form. Indeed, the codimension of $U^2$ for any \bpf\ subspace is bounded by $k^2+\bin{k+2}{3}$, whereas $\codim U^2\ge kn$ if $U$ has a \bp.
\end{rem}

\begin{rem}
(i) We note that understanding powers of subspaces is also interesting in itself since the coordinate ring of the image of the rational map 
\[
\P^{n-1} \dashrightarrow \P^r,\quad x\mapsto (f_1(x):\dots : f_r(x))
\]
is isomorphic to $\bigoplus_{i=0}^\infty U^i$ if $f_1,\dots,f_r\in A_d$ and $U=\spn(f_1,\dots,f_r)$ (cf. \cite{bc2018}).

From this point of view we only considered the degree 2 component of this ring and saw that the dimension of $U^2$ is minimal if and only if the map is not a morphism (if $r\ge \dim A_d-d+1$ and $n$ is large enough).

(ii) Similarly, understanding the Hilbert functions of \bpf\ subspaces is also an interesting topic. Here we give Blekherman's paper \cite{blekherman2015} as a reference where also several applications to optimization are given. He studies the conditions under which a \bpf\ subspace $U\subset A_d$ satisfies $UA_d\neq A_{2d}$ (see \cref{thm:degree2d}). 

(iii) We mention again the EGH conjecture (see \cref{rem:egh}) where one is interested in understanding Hilbert functions of ideals containing regular sequences using monomial ideals. Possible Hilbert function of homogeneous ideals $I\subset A$ have been completely characterized by Macaulay. 
However, adding the condition in \cref{rem:egh} that the monomial ideal contains a regular sequence in the same degrees, the problem becomes much harder.

This is very similar to our situation, as finding bounds for $\codim U^2$ is rather easy if we allow subspaces to have \bp s, but becomes more difficult if we require $U$ to be \bpf.
\end{rem}

\bibliographystyle{plain}

\end{document}